\newtheorem{theorem}{Theorem}[section]
\newtheorem{lemma}[theorem]{Lemma}
\newtheorem{proposition}[theorem]{Proposition}
\theoremstyle{definition}
\newtheorem{definition}[theorem]{Definition}
\newtheorem{remark}[theorem]{Remark}
\numberwithin{equation}{section}
 \def\N{\mathbb N}  
\def\R{\mathbb R}   \def\eps{\varepsilon}
\newcommand{\w}{\omega} \newcommand{\Om}{\Omega}
\newcommand{\di}{\textsf{d}}
\newcommand{\wh}{\widehat}
\newcommand{\wt}{\widetilde} \newcommand{\n}[1]{\| #1 \|}
\begin{document}
\title[Asymptotic behavior of neutral dynamical systems]{Asymptotic behavior of solutions of nonautonomous neutral dynamical systems}
\author[S.~Novo]{Sylvia Novo} \author[R.~Obaya]{Rafael Obaya}
\author[V.M.~Villarragut]{V\'{\i}ctor M. Villarragut} \address[Sylvia Novo,
Rafael Obaya]{Departamento de Matem\'{a}tica Aplicada, Universidad de
  Valladolid, Paseo del Cauce 59, 47011 Valladolid, Spain.}
\address[V\'{\i}ctor~M.~Villarragut]{Departamento de Matem\'{a}tica Aplicada a
  la Ingenier\'{\i}a Industrial, Universidad Polit\'{e}cnica de Madrid, Calle de
  Jos\'{e} Guti\'{e}rrez Abascal 2, 28006 Madrid, Spain.}  \thanks{The first two
  authors are partly supported by MICIIN/FEDER under project
  RTI2018-096523-B-100 and EU Marie-Sk\l odowska-Curie ITN Critical Transitions
  in Complex Systems (H2020-MSCA-ITN-2014 643073 CRITICS)} \email[Sylvia
Novo]{sylnov@wmatem.eis.uva.es} \email[Rafael Obaya]{rafoba@wmatem.eis.uva.es}
\email[V\'{\i}ctor M. Villarragut]{victor.munoz@upm.es}
\subjclass[2010]{Primary: 37B55, 34K40, 34K14}
\keywords{Nonautonomous dynamical systems, monotone skew-product
semiflows, neutral functional differential
equations, infinite delay, compartmental systems}
\date{}
\begin{abstract}
  This paper studies the dynamics of families of monotone nonautonomous neutral
  functional differential equations with nonautonomous operator, of great importance for their
  applications to the study of the long-term behavior of the trajectories of
  problems described by this kind of equations, such us compartmental systems and neural networks among many others. Precisely, more general
  admissible initial conditions are included in the study to show that the
  solutions are asymptotically of the same type as the coefficients  of the neutral and non-neutral part.
\end{abstract}
\maketitle
\section{Introduction}\label{secintro}
This paper studies the long-term behavior of the trajectories of a monotone
skew-product semiflow,
$\tau: \R^+ \times \Omega \times X \rightarrow \Omega \times X$,
$(t, \omega,x) \mapsto (\omega{\cdot} t, u(t,\omega,x))$, generated by a family
of nonautonomous differential equations. The base of the phase space, $\Omega$,
is a compact metric space endowed with a global recurrent flow
$(\Omega,\sigma,\R)$, and the fiber, $X$, is a Banach space with a positive cone
$X_+$ that induces an order relation. The monotone character means that, if
$\omega \in \Omega$ and $x_1,x_2\in X$ with $x_1 \leq x_2$, then
$u(t,\omega,x_1) \leq u(t,\omega,x_2)$ for each $t$ in the common interval of
definition of the trajectories. We denote $\omega{\cdot}t =\sigma(t,\omega)$ and
$u$ satisfies the cocycle identity
$u(t+s,\omega,x)=u(t,\omega{\cdot}s,u(s, \omega , x))$ for every $t,s \geq 0$.
\par
It is well known that the skew-product formalism is a powerful tool in the study
of linear and nonlinear evolution systems.  Frequently, this formalism is
obtained from a single nonautonomous differential equation using a standard hull
construction.
\par
An important result for monotone uniformly stable recurrent skew-product
semiflows is the convergence of relatively compact trajectories to their
omega-limit sets, which define 1-coverings of the base space. This result was
firstly proved by Jiang and Zhao~\cite{paper:jizh} in an abstract setting
applicable to cooperative systems of ordinary, finite delay and parabolic
nonautonomous differential equations. In particular, when the skew-product
semiflow comes from a single differential equation with a recurrence property in
the coefficients as constancy, periodicity, almost-periodicity among others, it
provides a unified generalization of the asymptotic constancy, periodicity or
almost-periodicity of the solutions, studied in many previous papers. \par
This theory was extended to nonautonomous functional differential equations
(FDEs for short) with infinite delay in Novo {\it et al.}~\cite{paper:NOS2007}
(see also Wang and Zhao~\cite{paper:wazh} for other implications of this
theory). In that paper, $X=BU$, i.e. the subset of functions of
$C((-\infty,0],\R^m)$ that are bounded and uniformly continuous, and the
semiflow is generated by the solutions of a family $y'=F(\w{\cdot}t,y_t)$ of
FDEs defined by a continuous function $F:\Omega \times BU \rightarrow BU$ which
is locally Lipschitz in its second variable. The space $BU$ satisfies standard
conditions of regularity that imply existence, uniqueness and continuous
dependence of the solutions with respect to the initial data (see Hino {\it et
  al.}~\cite{book:hino}).\par
Later, motivated by the applicability to the study of the long-term dynamics of
compartmental systems, the paper by Mu\~{n}oz-Villarragut {\it et
  al.}~\cite{paper:MNO} is the starting point of an important effort to extend
the previous results to nonautonomous neutral functional differential equations
(NFDEs for short) with infinite delay.
\par
Compartmental systems have been used as mathematical models for the study of the dynamical behavior of many processes in the biological and physical sciences
which depend on local mass balance conditions (see Jacquez~\cite{paper:jacq},
Jacquez and Simon~\cite{paper:jasi} and the references therein). Some initial
results for models described by FDEs with finite and infinite delay can be found
in Gy\H{o}ri~\cite{paper:gyor}, and Gy\H{o}ri and Eller~\cite{paper:gyel}. The
papers by Arino and Bourad~\cite{paper:arbo}, and Arino and
Haourigui~\cite{paper:arha} prove the existence of almost periodic solutions for
compartmental systems described by almost periodic FDEs and NFDEs with finite
delay. Gy\H{o}ri and Wu~\cite{paper:gywu} modeled the dynamical properties of
compartmental systems with active compartments by means of NFDEs with infinite
delay, whose neutral term represents the net amount of material produced or
swallowed by the compartments. This type of NFDEs equations have been
investigated by Wu and Freedman~\cite{paper:wufr}, and Wu~\cite{paper:jwu1991}.
\par
An important difficulty that appears in the monotone theory of NFDEs is that, in
many applications, the order structure must be defined by means of an
exponential ordering which provides a positive cone with empty
interior. Krisztin and Wu~\cite{paper:krwu} show the asymptotic periodicity of the solutions with Lipschitz continuous initial
data,  under appropriate conditions on the
coefficients of scalar periodic NFDEs with finite delay and linear neutral term,
which imply the monotonicity of the solutions for an exponential ordering.  By means of monotone skew-product semiflow techniques, Novo {\it et
  al.}~\cite{paper:NOV}, and Obaya and Villarragut~\cite{paper:obvi2} generalize
the previous results obtaining that, under appropriate assumptions, a family of
nonautonomous NFDEs with infinite delay and linear neutral term induces a
monotone skew-product semiflow on $\Omega \times BU$ for the exponential
ordering, and the omega-limit sets of bounded trajectories with Lipschitz
continuous initial data are copies of the base.  The case of stable
nonautonomous operator $D$ for the neutral part is also considered in Obaya and
Villarragut~\cite{paper:obvi}, where similar results are obtained for a new
transformed exponential ordering.
\par
The present paper provides new contributions to the core of the dynamical theory
of monotone recurrent skew-product semiflows generated by FDEs and NFDEs with
infinite delay, and improves the conditions of applicability of the theory to
compartmental systems and other models of interest, which will be explained in detail in forthcoming publications.
More precisely, this work provides a dynamical framework to study compartmental
systems described by neutral functional differential equations analogous to those
considered in~\cite{paper:gywu, paper:krwu,paper:NOS2007, paper:NOV, paper:obvi, paper:obvi2},
under physical conditions that have not been previously considered in the literature. In addition, Wu and and Zhao~\cite{paper:WZ2002} introduce the exponencial ordering and the associated monotone methods for abstract delayed reaction diffusion equations, and show a natural
way to extend the conclusions of the above references to nonautonomous compartmental systems with spatial diffusion, to which our study may also be applied.
\par
The structure and main goals of the paper are now described. Some basic notions
and properties of the theory of nonautonomous dynamical systems are included in
Section~\ref{secprel}. Section~\ref{secfun} is devoted to the study of families
$y'=F(\w{\cdot}t,y_t)$ of FDEs with infinite delay defined by continuous
functions $F:\Omega \times BC \rightarrow BC$, where
$BC =\{y\in C((-\infty,0],\R^m) \mid y\text{ is bounded}\}$, which are locally
Lipschitz continuous in their second variable. Although initial data in
$\Omega \times BC$ are physically admissible, the choice of this set as a phase
space is problematic, because the existence of a solution of the Cauchy problem
requires the measurability of the map
$(-\infty,T] \rightarrow \R^m, t \rightarrow F(\omega \cdot t,y_t)$ for each
$T \in \R$, $ \omega \in \Omega $ and $y \in C((-\infty,T],\R^m]$, (see
Driver~\cite{paper:driver} and Seifert~\cite{paper:seifert}). In our setting,
this is a consequence of assuming the continuity of
$F:\Omega \times B_r \rightarrow \R^m$ when the closed ball $B_r \subset BC$ is
endowed with the compact open topology, which, of course, is satisfied in all the  physical models that we want to work with. Instead of considering Lipschitz
continuous initial data in $BC$, we introduce the bigger set ${\mathcal{R}}$ of
the elements in $BC$ with uniformly bounded variation on the intervals
$[-k,-k+1]$ for $k \geq 1$. By considering an appropriate exponential ordering
$\leq_A$, assuming a quasimonotone condition on $F$, a componentwise separation
property and the uniform stability of $B_r$ for the order $\leq_A$, the main
conclusion of this section is that omega-limit sets of bounded trajectories with
initial data in ${\mathcal{R}}$ are $1$-coverings of the base $\Omega$, that is, the recurrent character is inherited.\par
Section~\ref{neutral} extends, for the transformed exponential ordering
introduced in~\cite{paper:obvi}, the previous results to families
$\frac{d}{dt}D(\omega{\cdot}t,z_t)=G( \omega{\cdot} t,z_t)$ of NFDEs defined by
a stable neutral term $D:\Omega \times BC \rightarrow \R^m$, linear in the state
component, and a function $G:\Omega \times BC \rightarrow \R^m$ that satisfies
properties of regularity analogous to those considered in the previous section.
The main idea is to deduce, from the stability of $D$, the invertibility of the
operator ${\widehat D}\colon\Omega \times BC\to \Omega \times BC$,
$(\w,x)\mapsto (\w, \wh D_2(\w,x))$, where
$\wh D_2(\w,x)\colon(-\infty,0]\to\R^m$, $s\mapsto D(\w{\cdot}s,x_s)$, and to
transform the NFDE into a FDE to which the conclusions of Section~\ref{secfun}
can be applied. As a consequence, the omega-limit sets of bounded trajectories with initial datum $x$ satisfying $\wh D_2(\w,x)\in {\mathcal{R}}$ are $1$-coverings of the
base $\Omega$, or what is equivalent, the trajectories reproduce asymptotically the recurrent behavior of the coefficients of the neutral and non-neutral part, that is, the dynamics exhibited by the time variation of the equation.
\par 
\section{Some preliminaries}\label{secprel}
Let $(\Om,d)$ be a compact metric space. A real {\em continuous flow \/}
$(\Om,\sigma,\R)$ is defined by a continuous mapping
$\sigma: \R\times \Om \to \Om,\; (t,\w)\mapsto \sigma(t,\w)$ satisfying
\begin{enumerate}
  \renewcommand{\labelenumi}{(\roman{enumi})}
\item $\sigma_0=\text{Id},$
\item $\sigma_{t+s}=\sigma_t\circ\sigma_s$ for each $s$, $t\in\R$,
\end{enumerate}
where $\sigma_t(\w)=\sigma(t,\w)$ for all $\w \in \Om$ and $t\in \R$. The set
$\{ \sigma_t(\w) \mid t\in\R\}$ is called the {\em orbit\/} or the {\em
  trajectory\/} of the point $\w$. We say that a subset $\Om_1\subset \Om$ is
{\em $\sigma$-invariant\/} if $\sigma_t(\Om_1)=\Om_1$ for every $t\in\R$.  A
subset $\Om_1\subset \Om$ is called {\em minimal \/} if it is compact,
$\sigma$-invariant and its only nonempty compact $\sigma$-invariant subset is
itself. Every compact and $\sigma$-invariant set contains a minimal subset; in
particular, it is easy to prove that a compact $\sigma$-invariant subset is
minimal if and only if every trajectory is dense. We say that the continuous
flow $(\Om,\sigma,\R)$ is {\em recurrent\/} or {\em minimal\/} if $\Om$ is
minimal. Almost periodic and almost automorphic flows are relevant examples of
recurrent flows. We refer to Ellis~\cite{book:Ellis} and see Shen and
Yi~\cite[part II]{book:shyi} for the study of topological and ergodic properties
of these flows.
\par
Let $E$ be a complete metric space and $\R^+=\{t\in\R\,|\,t\geq 0\}$. A {\em
  semiflow} $(E,\Phi,\R^+)$ is determined by a continuous map
$\Phi: \R^+\times E \to E,\; (t,x)\mapsto \Phi(t,x)$ which satisfies
\begin{enumerate}
  \renewcommand{\labelenumi}{(\roman{enumi})}
\item $\Phi_0=\text{Id},$
\item $\Phi_{t+s}=\Phi_t \circ \Phi_s\;$ for all $\; t$, $s\in\R^+,$
\end{enumerate}
where $\Phi_t(x)=\Phi(t,x)$ for each $x \in E$ and $t\in \R^+$.  The set
$\{ \Phi_t(x)\mid t\geq 0\}$ is the {\em semiorbit\/} of the point $x$. A subset
$E_1$ of $E$ is {\em positively invariant\/} (or just $\Phi$-{\em invariant\/})
if $\Phi_t(E_1)\subset E_1$ for all $t\geq 0$. A semiflow $(E,\Phi,\R^+)$ admits
a {\em flow extension\/} if there exists a continuous flow $(E,\wt \Phi,\R)$
such that $\wt \Phi(t,x)=\Phi(t,x)$ for all $x\in E$ and $t\in\R^+$. A compact
and positively invariant subset admits a flow extension if the semiflow
restricted to it admits one.
\par
Write $\R^-=\{t\in\R\,|\,t\leq 0\}$. A {\em backward orbit\/} of a point
$x\in E$ in the semiflow $(E,\Phi,\R^+)$ is a continuous map $\psi:\R^-\to E$
such that $\psi(0)=x$ and, for each $s\leq 0$, it holds that
$\Phi(t,\psi(s))=\psi(s+t)$ whenever $0\leq t\leq -s$. If for $x\in E$ the
semiorbit $\{\Phi(t,x)\mid t\ge 0\}$ is relatively compact, we can consider the
{\em omega-limit set\/} of $x$,
\[
  \mathcal{O}(x)=\bigcap_{s\ge 0}{\rm closure}{\{\Phi(t+s,x)\mid t\ge 0\}}\,,
\]
which is a nonempty compact connected and $\Phi$-invariant set.  Namely, it
consists of the points $y\in E$ such that $y=\lim_{n\to \infty} \Phi(t_n,x)$ for
some sequence $t_n\uparrow \infty$. It is well-known that every
$y\in\mathcal{O}(x)$ admits a backward orbit inside this set. Actually, a
compact positively invariant set $M$ admits a flow extension if every point in
$M$ admits a unique backward orbit which remains inside the set $M$
(see~\cite[part II]{book:shyi}).
\par
A compact positively invariant set $M$ for the semiflow $(E,\Phi,\R^+)$ is {\em
  minimal\/} if it does not contain any other nonempty compact positively
invariant set than itself. If $E$ is minimal, we say that the semiflow is
minimal.
\par
A semiflow is {\em of skew-product type\/} when it is defined on a vector bundle
and has a triangular structure; more precisely, a semiflow
$(\Om\times X,\tau,\,\R^+)$ is a {\em skew-product\/} semiflow over the product
space $\Om\times X$, for a compact metric space $(\Om,d)$ and a complete metric
space $(X,\textsf{d})$, if the continuous map $\tau$ is as follows:
\begin{equation}\label{skewp}
  \begin{array}{cccl}
    \tau \colon  &\R^+\times\Om\times X& \longrightarrow & \Om\times X \\
                 & (t,\w,x) & \mapsto &(\w{\cdot}t,u(t,\w,x))\,,
  \end{array}
\end{equation}
where $(\Om,\sigma,\R)$ is a real continuous flow
$\sigma:\R\times\Om\rightarrow\Om$, $\,(t,\w)\mapsto \w{\cdot}t$, called the
{\em base flow\/}. The skew-product semiflow~\eqref{skewp} is {\em linear\/} if
$u(t,\w,x)$ is linear in $x$ for each $(t,\w)\in\R^+\times\Om$.
\section{Functional differential equations with infinite delay}\label{secfun}
We consider the Fr\'{e}chet space $X=C((-\infty,0],\R^m)$ endowed with the
compact-open topology, i.e.~the topology of uniform convergence over compact
subsets, which is a metric space for the distance
\[
  \di(x,y)=\sum_{n=1}^\infty \frac{1}{2^n}\frac {\n{x-y}_n}{1+\n{x-y}_n}\,,\quad
  x,y\in X\,,
\]
where $\n{x}_n=\sup_{s\in[-n,0]}\n{{x(s)}}$, and $\n{\cdot}$ denotes the maximum
norm in $\R^m$. \par
Let $(\Om,\sigma,\R)$ be a minimal flow over a compact metric space $(\Om,d)$
and denote $\sigma(t,\w)=\w{\cdot}t$ for each $\w \in\Omega$ and $t\in\R$.  As
usual, given $I=(-\infty,a]\subset\R$, $t\in I$ and a continuous function
$x:I\to\R^m$, $x_t$ will denote the element of $X$ defined by $x_t(s)=x(t+s)$
for $s\in (-\infty,0]$.  We consider the family of nonautonomous infinite delay
functional differential equations
\begin{equation}\label{infdelay}
  z'(t)=F(\w{\cdot}t,z_t)\,, \quad t\geq 0\,,\;\w\in\Omega\,.
\end{equation}
The first objective of this section is to provide an appropriate framework to
study the dynamical behavior of the solutions of~\eqref{infdelay}. One of the
admissible phase spaces for the study of these equations is $BU$ (see~\cite{book:hino}), the Banach
space of bounded and uniformly continuous functions in $X$, i.e.
\[
  BU =\{x\in X \mid x \text{ is bounded and uniformly continuous}\}
\]
with the supremum norm $\n{x}_\infty=\sup_{s\in(-\infty,0]} \n{x(s)}$.\par
This is not the case for the Banach space
\[
  BC =\{x\in X \mid x \text{ is bounded}\}
\]
where, in general, the family~\eqref{infdelay} does not induce a local
skew-product semiflow on $\R^+\times \Om\times BC$. However, in many
applications, initial data in $BC$ are physically admissible. We show how to
overcome this drawback to introduce a dynamical structure on $\Omega\times BC$.
\par
Given $r>0$, we will denote
\[
  B_r=\{x\in BC \mid \n{x}_\infty \leq r\}
\]
and we consider the family of nonautonomous FDEs~\eqref{infdelay} defined by a
function $F\colon\Omega\times BC \to\R^m$, $(\w,x)\mapsto F(\w,x)$ satisfying:
\begin{enumerate}[label=\upshape(\textbf{F\arabic*}),series=infdelay_properties,leftmargin=27pt]
\item\label{F1} $F$ is continuous on $\Om\times BC$  when the the norm $\n{\cdot}_\infty$ is considered on $BC$, and Lipschitz continuous on  $\Om\times B_r$ in its second variable for   each $r>0$,
\end{enumerate}
which in particular implies that
\begin{equation}\label{F2}
  \text{$F(\Om\times B_r)$   is a bounded subset of $\R^m$ for each $r>0$.}
\end{equation}
From this condition, the standard theory of infinite delay differential
equations (see~\cite{book:hino}) assures that, for each $x\in BU$ and each
$\w\in\Om$, the system~\eqref{infdelay}$_\w$ locally admits a unique solution
$z(\cdot,\w,x)$ with initial value $x$, i.e.~$z(s,\w,x)=x(s)$ for each
$s\in (-\infty,0]$. Therefore, the family~\eqref{infdelay} induces a local
skew-product semiflow
\begin{equation}\label{skewBU}
  \begin{array}{cccl}
    \tau &:\mathcal{U}\subset \R^+\times\Om\times BU& \longrightarrow & \Om\times BU\\
         & (t,\w,x) & \mapsto &(\w{\cdot}t,u(t,\w,x))\,,
  \end{array}
\end{equation}
where $u(t,\w,x)\in BU$ and $u(t,\w,x)(s)=z_t(\omega,x)(s)=z(t+s,\w,x)$ for
$s\in (-\infty,0]$.\par
When the initial data $x$ belongs to $BC$ the existence and uniqueness is not
guaranteed from~\ref{F1}. In addition, we impose the following condition satisfied in important applications, such as compartmental systems and neural networks.
\begin{enumerate}[resume*=infdelay_properties]\setlength\itemsep{0.4em}
\item\label{F3} for each $r>0$, $F\colon\Om\times B_r\to\R^m$ is continuous when
  we take the restriction of the compact-open topology to $B_r$, i.e.~if
  $\w_n\to\w$ and $x_n\stackrel{\di\;}\to x$ as $n\uparrow\infty$ with
  $x\in B_r$, then $\lim_{n\to\infty}F(\w_n,x_n)=F(\w,x)$.
\end{enumerate}
Next, we show that $\Omega\times BC$ is indeed a good phase space.
\begin{proposition}\label{pro:exitBC}
  Under assumptions~\ref{F1}--\ref{F3}, for each $x\in BC$ and each $\w\in\Om$
  the system~\eqref{infdelay}$_\w$ locally admits a unique solution
  $z(\cdot,\w,x)$ with initial value $x$, i.e.~$z(s,\w,x)=x(s)$ for each
  $s\in (-\infty,0]$.
\end{proposition}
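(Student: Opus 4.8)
The plan is to reformulate~\eqref{infdelay}$_\w$ as an integral equation and solve it by a Schauder--Tychonoff fixed point argument on a family of equi-Lipschitz functions; condition~\ref{F3} is exactly what makes this work, whereas~\ref{F1} will only be needed for uniqueness. Fix $\w\in\Om$ and $x\in BC$, set $r=\n{x}_\infty$, pick $r'>r$, and by~\eqref{F2} put $M=\sup\{\n{F(\eta,y)}\mid\eta\in\Om,\ y\in B_{r'}\}<\infty$; then choose $\alpha>0$ with $M\alpha\le r'-r$. Let $\mathcal K$ be the set of continuous maps $z\colon(-\infty,\alpha]\to\R^m$ with $z(s)=x(s)$ for $s\le 0$ and $\n{z(t)-z(t')}\le M\,|t-t'|$ for all $t,t'\in[0,\alpha]$. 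A direct estimate shows that $z_t\in B_{r'}$ for every $t\in[0,\alpha]$ and every $z\in\mathcal K$. The set $\mathcal K$ is nonempty (it contains the map equal to $x$ on $(-\infty,0]$ and constantly $x(0)$ on $[0,\alpha]$), convex, and closed in the Fr\'echet space $C((-\infty,\alpha],\R^m)$ with the compact-open topology; it is in fact compact there by the Ascoli--Arzel\`a theorem, since on each compact interval $[-k,\alpha]$ its members are pointwise bounded and equicontinuous (on $[-k,0]$ they all coincide with the single, hence uniformly continuous, function $x$, and on $[0,\alpha]$ they are $M$-Lipschitz).

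I would then define $T\colon\mathcal K\to C((-\infty,\alpha],\R^m)$ by $(Tz)(s)=x(s)$ for $s\le 0$ and $(Tz)(t)=x(0)+\int_0^tF(\w{\cdot}s,z_s)\,ds$ for $t\in[0,\alpha]$. The decisive point is that $T$ is well defined: for $z\in\mathcal K$ the map $s\mapsto z_s$ is continuous from $[0,\alpha]$ into $X$ with the compact-open topology (translates of a function that is uniformly continuous on every compact interval) and takes values in $B_{r'}$, while $s\mapsto\w{\cdot}s$ is continuous; hence, by~\ref{F3}, $s\mapsto F(\w{\cdot}s,z_s)$ is continuous on $[0,\alpha]$, in particular integrable and bounded by $M$. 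That bound gives $\n{(Tz)(t)-(Tz)(t')}\le M\,|t-t'|$, so $T(\mathcal K)\subset\mathcal K$. For continuity of $T$: if $z_n\to z$ in $\mathcal K$, then for each fixed $s\in[0,\alpha]$ one has $(z_n)_s\to z_s$ in the compact-open topology, so $F(\w{\cdot}s,(z_n)_s)\to F(\w{\cdot}s,z_s)$ by~\ref{F3}; the integrands being dominated by $M$, dominated convergence gives $Tz_n\to Tz$ pointwise on $[0,\alpha]$, and since the $Tz_n$ are equi-Lipschitz this is uniform on $[0,\alpha]$, whence $Tz_n\to Tz$ in the compact-open topology. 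The Schauder--Tychonoff theorem then produces a fixed point $z\in\mathcal K$; it satisfies $z_0=x$, and since $s\mapsto F(\w{\cdot}s,z_s)$ is continuous, $z$ is differentiable on $[0,\alpha]$ with $z'(t)=F(\w{\cdot}t,z_t)$, i.e.\ it is a solution of~\eqref{infdelay}$_\w$.

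For uniqueness I would argue by Gronwall: if $z^1,z^2$ are solutions with initial value $x$ on a common interval $[0,\beta]$, then after shrinking $\beta$ we may assume $z^i_t\in B_{r'}$ there, so with $L$ the Lipschitz constant of $F$ on $\Om\times B_{r'}$ in its second variable from~\ref{F1} and $\phi(t)=\sup_{[0,t]}\n{z^1-z^2}$ one has $\n{z^1_s-z^2_s}_\infty=\phi(s)$ (the two functions agree on $(-\infty,0]$), hence $\phi(t)\le L\int_0^t\phi(s)\,ds$ and $\phi\equiv 0$. I expect the only genuine obstacle to be making $T$ well defined and continuous, that is, guaranteeing integrability of $s\mapsto F(\w{\cdot}s,z_s)$ when $x$ is merely bounded: this is precisely the purpose of~\ref{F3}, and once it is in hand everything else (the Ascoli compactness, the fixed point, the Gronwall uniqueness) is routine, with the needed equicontinuity essentially free because on $(-\infty,0]$ all competing functions are frozen to the single function $x$.
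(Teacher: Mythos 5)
Your argument is correct, but it follows a genuinely different route from the paper's. The paper's proof is a two-line reduction to the literature: as explained in Seifert's and Sawano's papers, existence and uniqueness for initial data that are merely bounded follow from Driver's theorem once one checks that $t\mapsto F(\w{\cdot}t,y_t)$ is continuous on $(-\infty,T]$ for every bounded $y\in C((-\infty,T],\R^m)$, and this continuity is exactly the consequence of~\ref{F3} (translates $y_{t_n}$ stay in some $B_r$ and converge in the compact-open topology). That is precisely the observation you use to make your operator $T$ well defined, so both proofs hinge on the same key point; the difference is that you then re-prove the quoted existence--uniqueness result in this concrete setting rather than citing it: Schauder--Tychonoff on the compact convex set $\mathcal K$ of $M$-Lipschitz continuations of $x$ (compactness via Ascoli on each $[-k,\alpha]$, with equicontinuity free because all members agree with $x$ on $(-\infty,0]$), continuity of $T$ by~\ref{F3} plus dominated convergence and equi-Lipschitzness, and uniqueness by Gronwall using the Lipschitz part of~\ref{F1} on $\Om\times B_{r'}$ together with $\n{z^1_s-z^2_s}_\infty=\sup_{[0,s]}\n{z^1-z^2}$. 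Your version is self-contained and makes visible where each hypothesis enters (\ref{F3} for existence of the vector field along trajectories, \ref{F1} only for uniqueness), at the cost of redoing a standard fixed-point construction; the paper's version is shorter and leans on Driver's classical result, which also covers the continuation issues implicitly used later in the paper.
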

\begin{proof}
  As explained in~\cite{paper:seifert} (see also Sawano~\cite{paper:sawano}),
  the result can be deduced from~\cite{paper:driver} once we check the
  continuity of the map
  \[(-\infty,T]\to \R^m,\quad t\mapsto F(\w{\cdot}t,y_t)\] for each $T \in \R$
  and each bounded function $y\in C((-\infty,T],\R^m)$. This is an easy
  consequence of~\ref{F3}, because if $t=\lim_{n\to\infty}t_n$ with
  $t_n\in(-\infty,T]$, then $y_{t_n}$ belongs to some $B_r$ for all $n\in\N$ and
  $y_{t_n}\stackrel{\di\;}\to y_{t}$ as $n\uparrow \infty$.
\end{proof}As a consequence, the map~\eqref{skewBU} is extended to
$\Om\times BC$. Moreover, as shown next, this extension turns out to be
continuous on bounded sets when the restriction of the compact-open topology to
$BC$ and the product metric topology on $\Omega\times BC$ are considered.
\begin{proposition}\label{continuitybolas} Under
  assumptions~\ref{F1}--\ref{F3}, the local map
  \begin{equation*}
    \begin{array}{ccl}
      \mathcal{U}\subset\R^+\times\Om\times B_r& \longrightarrow & \Om\times BC\\
      (t,\w,x) & \mapsto &(\w{\cdot}t,u(t,\w,x))
    \end{array}
  \end{equation*}
  is continuous when we take the restriction of the compact-open topology to
  $B_r$, i.e.~if $t_n\to t$, $\w_n\to\wt\w$ and
  $x_n\stackrel{\textup\di\;}\to \wt x$ as $n\uparrow\infty$ with
  $x_n,\, \wt x\in B_r$ for all $n\in\N$, then $\w_n{\cdot}t_n\to \wt\w{\cdot}t$
  and $u(t_n,\w_n,x_n)\stackrel{\textup\di\;}\to u(t,\wt \w,\wt x)$ as
  $n\uparrow\infty$.
\end{proposition}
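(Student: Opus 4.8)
The plan is to reduce the statement to a local result near $t=0$ and then propagate it by the cocycle property. Since the base flow $\sigma$ is jointly continuous, $\w_n{\cdot}t_n\to\wt\w{\cdot}t$ is immediate, so the whole content is the convergence $u(t_n,\w_n,x_n)\stackrel{\di}{\to}u(t,\wt\w,\wt x)$ in the compact-open topology. First I would establish uniform a priori bounds: fixing $R>r$ and using that, by~\eqref{F2}, $\n{F}\le M$ on $\Om\times B_R$, a standard continuation argument produces $\alpha_0>0$ depending only on $r,R,M$ such that, using $x_n,\wt x\in B_r$, for all $n$ (and for $(\wt\w,\wt x)$) the solution is defined on $[0,\alpha_0]$, the segments $z_\sigma(\w_n,x_n)$ stay in $B_R$ for $\sigma\in[0,\alpha_0]$, and $z(\cdot,\w_n,x_n)$ is Lipschitz with constant $M$ there.

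Next, write $w_n=z(\cdot,\w_n,x_n)$, a continuous function on $(-\infty,\alpha_0]$ with $w_n|_{(-\infty,0]}=x_n$. On $[0,\alpha_0]$ the $w_n$ are uniformly bounded and $M$-Lipschitz, while on $(-\infty,0]$ they converge to $\wt x$ in the compact-open topology; gluing the Lipschitz modulus on the right with a common modulus of continuity of $\{x_n\}$ on the left (available because $x_n\to\wt x$ uniformly on compacta and $\wt x$ is continuous), one checks that $\{w_n\}$ is pointwise bounded and equicontinuous on each $[-k,\alpha_0]$, hence relatively compact in $C((-\infty,\alpha_0],\R^m)$ for the compact-open topology by Arzel\`a--Ascoli (applied on each compact subinterval together with a diagonal argument). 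Let $w_*$ be the limit of any convergent subsequence. On $(-\infty,0]$ it equals $\lim x_n=\wt x$, and for $\sigma\in[0,\alpha_0]$ I would pass to the limit in the integral equation $z(\sigma,\w_n,x_n)=x_n(0)+\int_0^\sigma F(\w_n{\cdot}\rho,z_\rho(\w_n,x_n))\,d\rho$: for each $\rho$ one has $\w_n{\cdot}\rho\to\wt\w{\cdot}\rho$ and $z_\rho(\w_n,x_n)\to(w_*)_\rho$ in the compact-open topology, with all these segments in $B_R$, so~\ref{F3} gives $F(\w_n{\cdot}\rho,z_\rho(\w_n,x_n))\to F(\wt\w{\cdot}\rho,(w_*)_\rho)$, and the uniform bound $M$ allows dominated convergence. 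Thus $w_*$ solves~\eqref{infdelay}$_{\wt\w}$ with initial datum $\wt x\in BC$, so Proposition~\ref{pro:exitBC} forces $w_*=z(\cdot,\wt\w,\wt x)$ on $(-\infty,\alpha_0]$. As the limit is independent of the subsequence, $w_n\to z(\cdot,\wt\w,\wt x)$ in the compact-open topology on $(-\infty,\alpha_0]$, and since $t_n\to t$ and $t<\alpha_0$, shifting gives $u(t_n,\w_n,x_n)=(w_n)_{t_n}\stackrel{\di}{\to}u(t,\wt\w,\wt x)$, using uniform convergence of $w_n$ on a compact interval slightly larger than $[t-k,t]$ for each $k$ together with the uniform continuity of $z(\cdot,\wt\w,\wt x)$ there.

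To reach an arbitrary $t$ with $(t,\wt\w,\wt x)\in\mathcal U$, I would cover $[0,t]$ by finitely many intervals of length less than the existence step associated with a ball that contains $z(\cdot,\wt\w,\wt x)$ on all of $[0,t]$, say $0=s_0<s_1<\dots<s_N$ with $s_N>t$, and iterate the local result along the cocycle identity $u(s_{j+1},\w,x)=u(s_{j+1}-s_j,\w{\cdot}s_j,u(s_j,\w,x))$: at each stage $\w_n{\cdot}s_j\to\wt\w{\cdot}s_j$ and $u(s_j,\w_n,x_n)\to u(s_j,\wt\w,\wt x)$ with everything staying in a fixed ball, so the previous paragraph applies and in particular also shows $(s_{j+1},\w_n,x_n)\in\mathcal U$ for $n$ large. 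After $N$ steps, applying the local statement one last time at the base point $\w_n{\cdot}s_{N-1}$ with time $t_n-s_{N-1}\to t-s_{N-1}$, one obtains $u(t_n,\w_n,x_n)\to u(t,\wt\w,\wt x)$, which completes the proof.

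I expect the delicate point to be the passage to the limit in the integral equation in the second step, specifically checking that the segments $z_\rho(\w_n,x_n)$ converge to $(w_*)_\rho$ in the compact-open topology and remain in the fixed ball $B_R$, which is exactly what is needed to invoke~\ref{F3}; the a priori estimate yielding a common interval of existence and the equicontinuity bound across $\sigma=0$ are the other technical ingredients, but they are routine.
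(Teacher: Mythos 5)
Your argument is correct, but it follows a genuinely different route from the paper's. The paper reduces to the case of a bounded nonlinearity by truncating $F$ with a cut-off $\varphi$, invokes (an adaptation of) Proposition~4.2 of~\cite{paper:NOS2007} for the truncated family, and then shows that for large $n$ the truncated solutions coincide with the original ones on $[0,t+\delta]$; the term coming from $t_n\to t$ is then handled by the explicit integral estimates~\eqref{t+s>0} and~\eqref{t+s<0}. You instead give a self-contained compactness argument: uniform bounds and a common existence step on a short interval from~\eqref{F2}, equicontinuity and Arzel\`a--Ascoli, passage to the limit in the integral equation via~\ref{F3} and dominated convergence, identification of the limit through the uniqueness statement of Proposition~\ref{pro:exitBC}, and finally propagation along $[0,t]$ by the cocycle identity. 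What the paper's truncation buys is that the whole interval $[0,t+\delta]$ is treated in one stroke, at the price of leaning on the earlier $BU$-result; what your approach buys is independence from~\cite{paper:NOS2007} and a completely standard limit-plus-uniqueness scheme, at the price of the stepwise iteration. One point in that iteration deserves an explicit sentence: the assertion that at each stage ``everything stays in a fixed ball'' does not follow from compact-open convergence of the segments alone (which gives no sup-norm control); it follows because $\di$-convergence of $u(s_j,\w_n,x_n)$ is uniform convergence of $z(\cdot,\w_n,x_n)$ on the compact interval $[0,s_j]$, which combined with $\n{x_n}_\infty\le r$ gives, for $n$ large, $u(s_j,\w_n,x_n)\in B_{k+1}$ with $k=\sup_{\sigma\in[0,t]}\n{u(\sigma,\wt\w,\wt x)}_\infty$ --- exactly the device the paper uses to obtain~\eqref{cotaene} --- so that a single step length, computed from $B_{k+1}\subset B_{k+2}$, is valid at every stage; also choose the partition so that $s_{N-1}<t$ strictly, ensuring $t_n-s_{N-1}\ge 0$ for large $n$. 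With these two remarks made explicit, your proof is complete.
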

\begin{proof} First we fix a $t\in\R^+$ such that $u(t,\wt \w,\wt x)$ is defined
  and we check that $u(t,\w_n,x_n)\stackrel{\di\;}\to u(t,\wt\w,\wt x)$ as
  $n\uparrow \infty$. If $F$ is a bounded function on $\Om\times BC$, then
  $\sup_{\tau\in[0,t], n\geq 1}\n{u(\tau,\w_n,\,x_n)}_\infty<\infty$ and the
  proof of Proposition~4.2 of~\cite{paper:NOS2007} can be easily adapted to this
  case. Otherwise, take $\delta>0$ such that
  $u(\tau,\wt \w,\wt x)$ is defined for $\tau \in[0,t+\delta]$ and denote by
  $k=\sup_{\tau\in[0,t+\delta]}\{\n{u(\tau,\wt \w,\wt x)}_\infty,r\}$.  In
  addition, from~\eqref{F2}, we know that $F(\Om\times B_{k+1})$ is bounded and
  we can take $\rho=\sup_{(\w,x)\in \Om\times B_{k+1}}\n{F(\w,x)}$. Now let
  $\varphi\colon \R^m\to\R^m$ be a $C^\infty$ function such that
  \begin{equation*}
    \varphi(y)=\begin{cases} y, & \text{if } \n{y}\leq \rho\,,\\
      0,  &\text{if } \n{y} \geq \rho +1\ \end{cases}
  \end{equation*}
  and consider the family of equations
  \begin{equation*}
    \wt z{\,'}(t)=\varphi(F(\w{\cdot}t,\wt z_t))\,, \quad t\geq 0\,,\;\w\in\Omega\,.
  \end{equation*}
  The boundedness of $\varphi\circ F$ provides
  $v(\tau,\w_n,x_n)\stackrel{\di\;}\to v(\tau ,\wt\w,\wt x)$ as
  $n\uparrow \infty$ for each $\tau\in [0,t+\delta]$, where
  $v(\tau,\w,x)( s)=\wt z(\tau+ s,\w,x)$ for $ s\in (-\infty,0]$, as
  usual. Therefore, the definitions of $\varphi$ and $\rho$ yield
  $v(\tau,\wt \w,\wt x)=u(\tau,\wt \w,\wt x)$ for $\tau\in[0,t+\delta]$, that
  is,
  \begin{equation*}
    \wt z(\tau ,\w_n,x_n) \to z(\tau,\wt \w,\wt x) \text{ as } n\uparrow \infty \text{ uniformly for }\tau \in  [0,t+\delta]\,.
  \end{equation*}
  From this, together with $\n{x_n}_\infty\leq r$ for each $n\in\N$, we deduce
  that there is an $n_0\in\N$ such that
  $\sup_{\tau\in[0,t+\delta]} \n{v(\tau,\w_n,x_n)}_\infty\leq k+1$ for each
  $n\geq n_0$. Hence, $v(t,\w_n,x_n)=u(t,\w_n,x_n)$ for $n\geq n_0$ and
  $u(t,\w_n,x_n)\stackrel{\di\;}\to u(t,\wt\w,\wt x)$ as $n\uparrow \infty$, as
  claimed. Moreover, $v(\tau,\w_n,x_n)=u(\tau,\w_n,x_n)$ for $n\geq n_0$ and
  $\tau\in [0,t+\delta]$, and
  \begin{equation}\label{cotaene}
    \sup\{\n{u(\tau,\w_n,x_n)}_\infty \mid \tau\in [0,t+\delta]\,,\; n\in\N\}<\infty\,.
  \end{equation}
  \par
  Finally, if in addition $t_n\to t$ as $n\uparrow \infty$, then
  \begin{equation*}
    \di(u(t,\wt\w,\wt x), u(t_n,\w_n,x_n)) \leq \di(u(t,\wt\w,\wt x), u(t,\w_n,x_n))+ \di(u(t,\w_n,x_n), u(t_n,\w_n,x_n))
  \end{equation*}
  and we only have to check that the second term vanishes as $n\uparrow
  \infty$. \par
  Let $[a,b]\subset (-\infty, 0]$ and $s\in [a,b]$. If $t+s> 0$, we take an
  $n_1\in\N$ such that the real interval $I_n$ with extrema $t+s$ and $t_n+s$ is
  contained in $(0,t+\delta)$ for $n\geq n_1$.  Thus, from~\eqref{cotaene}
  and~\eqref{F2}, we deduce that there is a constant $M$ such that
  \begin{equation}\label{t+s>0}
    \!\!\n{u(t,\w_n,x_n)(s)-u(t_n,\w_n,x_n)(s)}\leq \int_{I_n}\!\! \n{F(\w_n{\cdot}\tau, z_\tau(\w_n, x_n))}\, d\tau\leq M\, |t-t_n|
  \end{equation}
  for each $n\geq n_1$.  If $t+s < 0$, there is also an $n_2\geq n_1$ such that
  $t_n+ s< 0$ for each $n\geq n_2$ and, thus,
  \begin{equation}\label{t+s<0}
    \n{u(t,\w_n,x_n)(s)-u(t_n,\w_n,x_n)(s)}=\n{x_n(t+s)-x_n(t_n+s)}\,.
  \end{equation}
  We omit the case $t+s=0$ because it is a combination of the previous
  cases. Therefore, from~\eqref{t+s>0}, \eqref{t+s<0} and the convergence of
  $x_n\stackrel{\di\;}\to \wt x$ as $n\uparrow\infty$, it is easy to check that
  $\n{u(t,\w_n,x_n)(s)-u(t_n,\w_n,x_n)(s)}$ converges to $0$ as
  $n\uparrow \infty$ uniformly for $s$ in the compact set $[a,b]$, which
  finishes the proof.
\end{proof}
The next result proves that, under assumptions~\ref{F1} and~\ref{F3}, each
bounded solution $z(\cdot,\w_0,x_0)$ provides a relatively compact
trajectory. Note that solutions that remain bounded are globally defined on the
whole real line (see e.g.~\cite{paper:sawano}).
\begin{proposition}\label{trajectory}
  Assume~\ref{F1}--\ref{F3}. If $x_0\in BC$ and $z(\cdot ,\w_0,x_0)$ is a
  solution of equation~\eqref{infdelay}$_{\w_0}$ bounded for the norm
  $\n{\cdot }_\infty $, then $\mathcal{F}=\{u(t,\w_0,x_0)\mid t\ge 0\}$ is a
  relatively compact subset of $BC$ for the compact-open topology.
\end{proposition}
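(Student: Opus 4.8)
The plan is to verify the hypotheses of the Ascoli--Arzel\`a theorem on each compact interval $[-n,0]$ and then to upgrade the conclusion to the compact-open topology on the whole half-line. Concretely, for every integer $n\ge 1$ I would show that the family of restrictions $\{x|_{[-n,0]}\mid x\in\mathcal F\}$ is uniformly bounded and equicontinuous in $C([-n,0],\R^m)$ with the sup-norm, deduce that it is relatively compact there, and finally recover relative compactness of $\mathcal F$ in $X$ from the fact that the metric $\di$ induces exactly the projective limit of these sup-norm topologies.

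For the uniform bound: since $z(\cdot,\w_0,x_0)$ is bounded for $\n{\cdot}_\infty$ it is globally defined on $\R$, so there is $r>0$ with $\n{z(t,\w_0,x_0)}\le r$ for all $t\in\R$; hence $u(t,\w_0,x_0)\in B_r$ and $z_\tau(\w_0,x_0)\in B_r$ for every $t,\tau\ge 0$, and in particular $\mathcal F\subset B_r$. By~\eqref{F2} one may put $\rho=\sup\{\n{F(\w,x)}\mid(\w,x)\in\Om\times B_r\}<\infty$. For equicontinuity, fix $n\ge 1$ and $\eps>0$; using that $x_0$ is uniformly continuous on $[-n,0]$, pick $\delta\in(0,\eps/(2\rho+1))$ with $\n{x_0(a)-x_0(b)}<\eps/2$ whenever $a,b\in[-n,0]$ and $|a-b|<\delta$. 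Given $t\ge 0$ and $s_1,s_2\in[-n,0]$ with $|s_1-s_2|<\delta$, I would estimate $\n{z(t+s_1,\w_0,x_0)-z(t+s_2,\w_0,x_0)}$ by distinguishing three cases: when $t+s_1,t+s_2\ge 0$ the integral equation for $z$ together with $z_\tau(\w_0,x_0)\in B_r$ yields a bound $\rho\,|s_1-s_2|<\eps/2$; when $t+s_1,t+s_2\le 0$ those points lie in $[-n,0]$ and the difference is just $\n{x_0(t+s_1)-x_0(t+s_2)}<\eps/2$; and when one of them is nonnegative and the other nonpositive, splitting at $\tau=0$ (where $z(0,\w_0,x_0)=x_0(0)$) and applying the two previous bounds to the two increments, each of which is at most $|s_1-s_2|$, gives a bound $<\eps$. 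Since $\delta$ does not depend on $t$, this proves the family uniformly equicontinuous on $[-n,0]$.

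With Ascoli--Arzel\`a in hand, $\{x|_{[-n,0]}\mid x\in\mathcal F\}$ is relatively compact in $C([-n,0],\R^m)$ for each $n$; a standard diagonal argument then extracts from any sequence in $\mathcal F$ a subsequence converging uniformly on every compact subinterval of $(-\infty,0]$, hence converging for $\di$, so $\mathcal F$ is relatively compact in $(X,\di)$. Since $B_r$ is closed in $(X,\di)$ (a $\di$-limit of functions bounded by $r$ is bounded by $r$) and $\mathcal F\subset B_r\subset BC$, the closure of $\mathcal F$ is contained in $BC$, which gives the claimed relative compactness in $BC$ for the compact-open topology. I expect the only genuinely delicate point to be the ``gluing'' case $t+s_1\ge 0\ge t+s_2$ of the equicontinuity step: one must control the modulus of continuity of $u(t,\w_0,x_0)$ on $[-n,0]$ \emph{uniformly in $t\ge 0$} across the moving switching point $s=-t$. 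This works precisely because $z(\cdot,\w_0,x_0)$ is continuous there and both contributing pieces are uniformly tame --- the integral part being Lipschitz with the $t$-independent constant $\rho$, and the $x_0$-part being controlled by the $t$-independent modulus of continuity of $x_0$ on $[-n,0]$.
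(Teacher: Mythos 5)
Your proof is correct and follows essentially the same route as the paper: reduce to $\mathcal F\subset B_r$, use the boundedness of $F$ on $\Om\times B_r$ from~\ref{F1} to get a $t$-independent Lipschitz estimate on the forward part and the continuity of $x_0$ on compacts for the backward part, then conclude by the standard compactness criterion for the compact-open topology. The only cosmetic differences are that you handle the switching point $s=-t$ by a direct three-case split (the paper instead writes $u(t,\w_0,x_0)(s)=u(\rho,\w_0,x_0)(t+s-\rho)$ for $t\in[0,\rho]$ and invokes uniform continuity of a single function on $[-2\rho,0]$) and that you use Ascoli--Arzel\`a plus a diagonal argument where the paper cites Theorem~8.1.4 of~\cite{book:hino}.
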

\begin{proof}
  Let $r=\sup_{t\geq 0}\n{u(t,\w_0,x_0)}_\infty$.  According to Theorem~8.1.4 of~\cite{book:hino}, $\mathcal{F}$ is relatively compact in $X$
  if, and only if, for every $s\in(-\infty,0]$ $\mathcal{F}$ is equicontinuous
  at $s$ and $\mathcal{F}(s)=\{u(t,\w_0,x_0)(s)\mid t\geq 0\}$ is relatively
  compact in $\R^m$.\par
  The second condition holds because $\mathcal{F}\subset B_r$.  As for the
  equicontinuity, let $\rho>0$, $\eps>0$ and
  $M=\sup_{(\w,x)\in \Om\times B_r}\n{F(\w,x)}$, which is finite thanks
  to~\eqref{F2}.  Then, for each $t\ge \rho$ and $s_1,s_2\in[-\rho ,0]$ with
  $|s_1-s_2|<\eps/M$ and $s_1\leq s_2$ (the case $s_2\leq s_1$ is analogous), we
  have
  \begin{equation}\label{functional:relcomptraj}
    \n{u(t,\w_0,x_0)(s_1)-u(t,\w_0,x_0)(s_2)}\le
    \int_{t+s_1}^{t+s_2} \n{F(\w_0{\cdot}\tau,z_\tau (\w,_0,x_0))}\,d
    \tau\le \eps\,.
  \end{equation}
  On the other hand, if $t \in [0,\rho ]$, then, for each $s\in[-\rho ,0]$,
  \[
    t+s-\rho \in[-2\rho ,0]\quad\text{and}\quad u(t,\w_0,x_0)(s)=
    u(\rho,\w_0,x_0)(t+s-\rho)\,.
  \]
  Therefore, the equicontinuity of $\mathcal{F}$ follows
  from~\eqref{functional:relcomptraj} and the uniform continuity of
  $u(\rho,\w_0,x_0)$ on $[-2\rho ,0]$, which finishes the proof.
\end{proof}
In the situation of the foregoing proposition, we can define the
\emph{omega-limit set} of the trajectory of the point $(\w_0,x_0)$ as
\[\mathcal{O}(\w_0,x_0)=\big\{(\w,x)\mid \exists \,t_n\uparrow
  \infty\;\textup{ with } \,\w_0{\cdot}t_n\to\w\,,\;
  u(t_n,\w_0,x_0)\stackrel{\di\;}\to x\big\}\,,\] and the following proposition
provides its main properties.
\begin{proposition}\label{omegalimit} Assume~\ref{F1}--\ref{F3}.
  If $(\w_0,x_0)\in\Om\times BC$ and $z({\cdot},\w_0,x_0)$ is a solution
  of~\eqref{infdelay}$_{\w_0}$ bounded for the norm $\n{{\cdot}}_\infty$, then
  $\mathcal{O}(\w_0,x_0)$ is a nonempty, compact and invariant subset of
  $\Om\times BU$ admitting a flow extension.
\end{proposition}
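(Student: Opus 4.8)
The plan is to verify the three clusters of properties in turn --- nonemptiness and compactness, positive invariance together with the flow extension, and finally the fact that the limiting functions actually lie in $BU$ rather than merely in $BC$.

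First I would establish that $\mathcal{O}(\w_0,x_0)$ is nonempty and compact. By Proposition~\ref{trajectory}, the set $\mathcal{F}=\{u(t,\w_0,x_0)\mid t\ge 0\}$ is relatively compact in $BC$ for the compact-open topology, and $\Om$ is compact; hence $\{(\w_0{\cdot}t,u(t,\w_0,x_0))\mid t\ge 0\}$ is relatively compact in $\Om\times BC$ with the product metric. Given any sequence $t_n\uparrow\infty$ one extracts a convergent subsequence, so $\mathcal{O}(\w_0,x_0)$ is nonempty; it is closed as a countable intersection of closures, hence compact. For invariance and the flow extension I would argue as in the analogous results of~\cite{paper:NOS2007}: taking $(\w,x)\in\mathcal{O}(\w_0,x_0)$ with $\w_0{\cdot}t_n\to\w$ and $u(t_n,\w_0,x_0)\stackrel{\di\;}\to x$, for any fixed $t\ge 0$ the continuity on bounded sets proved in Proposition~\ref{continuitybolas} (all the $u(t_n,\w_0,x_0)$ lie in a common ball $B_r$, $r=\sup_{t\ge 0}\n{u(t,\w_0,x_0)}_\infty$) gives $\tau(t,\w_0{\cdot}t_n,u(t_n,\w_0,x_0))\to\tau(t,\w,x)$; by the cocycle identity the left-hand side equals $(\w_0{\cdot}(t_n+t),u(t_n+t,\w_0,x_0))$, so $\tau(t,\w,x)\in\mathcal{O}(\w_0,x_0)$, proving positive invariance. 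To get a backward orbit, a standard diagonal argument over sequences $t_n-k$, $k\in\N$, produces, for each $k$, a point $(\w^{(k)},x^{(k)})\in\mathcal{O}(\w_0,x_0)$ with $\tau(k,\w^{(k)},x^{(k)})=(\w,x)$, and Proposition~\ref{continuitybolas} lets one pass to the limit to build a full continuous backward orbit remaining in $\mathcal{O}(\w_0,x_0)$; uniqueness of this backward orbit follows from injectivity of $u(t,\w,\cdot)$ on the omega-limit set, which in turn comes from backward uniqueness of solutions of~\eqref{infdelay} (two solutions agreeing on $(-\infty,0]$ after time $t$ must agree before, since the delay equation is an ODE driven by the past). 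Then the criterion quoted from~\cite[part~II]{book:shyi} yields the flow extension.

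The main obstacle --- and the genuinely new point compared with the $BU$ theory --- is showing $\mathcal{O}(\w_0,x_0)\subset\Om\times BU$, i.e.\ that every $x$ arising as a compact-open limit of segments $u(t_n,\w_0,x_0)$ is uniformly continuous on $(-\infty,0]$, not just bounded and continuous. The idea is to exploit a uniform equicontinuity that the trajectory acquires for positive times: for $t\ge\rho$ and $s_1,s_2\in[-\rho,0]$, the bound~\eqref{functional:relcomptraj} gives $\n{u(t,\w_0,x_0)(s_1)-u(t,\w_0,x_0)(s_2)}\le M|s_1-s_2|$ with $M=\sup_{(\w,x)\in\Om\times B_r}\n{F(\w,x)}<\infty$; since $\rho$ is arbitrary and the bound is uniform in $t\ge\rho$, every segment $u(t,\w_0,x_0)$ with $t$ large is Lipschitz with constant $M$ on any compact subinterval of $(-\infty,0]$, with $M$ independent of the interval. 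For $t_n\uparrow\infty$ one eventually has $t_n\ge\rho$ for every $\rho$, so the compact-open limit $x$ inherits the global estimate $\n{x(s_1)-x(s_2)}\le M|s_1-s_2|$ for all $s_1,s_2\in(-\infty,0]$; in particular $x$ is (globally Lipschitz, hence) uniformly continuous, so $x\in BU$. This is exactly the smoothing that makes the enlarged phase space $\Om\times BC$ behave well, and it is the step I expect to require the most care in making uniform over all compact subintervals simultaneously.

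Putting these together: $\mathcal{O}(\w_0,x_0)$ is nonempty and compact by Propositions~\ref{trajectory} and~\ref{continuitybolas}, it is contained in $\Om\times BU$ by the uniform Lipschitz estimate above, it is positively invariant by the cocycle identity and continuity on bounded sets, and it admits a flow extension because every point has a unique backward orbit inside it. This completes the proof.
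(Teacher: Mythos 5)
Your proposal is correct and follows essentially the same route as the paper: relative compactness of the trajectory from Proposition~\ref{trajectory}, the uniform bound $M=\sup_{\Om\times B_r}\n{F}$ giving a Lipschitz estimate that places the limit functions in $BU$, invariance via Proposition~\ref{continuitybolas} and subsequences of $t_n-t$, and the flow extension through the Shen--Yi criterion with unique backward orbits (the paper defers the backward-orbit details to Proposition~4.4 of~\cite{paper:NOS2007}, which is exactly the diagonal construction and history-determines-uniqueness argument you sketch).
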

\begin{proof} Thanks to Proposition~\ref{trajectory}, $\mathcal{O}(\w_0,x_0)$ is
  nonempty and relatively compact; in order to prove that $\mathcal{O}(\w_0,x_0)$ is compact, it
  suffices to check that it is closed, which is omitted.
  \par
  Next we show that $\mathcal{O}(\w_0,x_0)\subset \Om\times BU$.  Let
  $r=\sup_{t\geq 0}\n{u(t,\w_0,x_0)}_\infty$ and
  $M=\sup_{\tau\geq 0}\n{F(\w_0{\cdot}\tau,z_\tau(\w_0,x_0)}$, which is finite
  from~\eqref{F2} because $z_\tau(\w_0,x_0)=u(\tau,\w_0,x_0)\in B_r$.  Take
  $(\w,x)\in \mathcal{O}(\w_0,x_0)$, i.e.
  \begin{equation}\label{omegalimitcon}
    \exists \,t_n\uparrow
    \infty\;\textup{ with } \w=\lim_{n\to\infty}\w_0{\cdot}t_n \textup{ and }
    x\stackrel{\di\;}=\lim_{n\to\infty} u(t_n,\w_0,x_0)\,.
  \end{equation}
  Then, given $t,s\in(-\infty,0]$ (assume without loss of generality that
  $t\leq s$), there is an $n_0\in\N$ depending on them such that $t_n+t\geq 0$
  and $t_n+s\geq 0$ for each $n\geq n_0$.  Then, we have
  \[\n{z(t+t_n,\w_0,x_0)-z(t_n+s,\w_0,x_0)}\leq\int_{t_n+t}^{t_n+s}\!\!\!
    \n{F(\w_0{\cdot}\tau, z_\tau(\w_0, x_0))}\, d\tau\leq M\, |t-s|\,,\] which
  in turn implies that
  \begin{equation*}
    \n{x(t)-x(s)}\leq \lim_{n\to\infty}\n{z(t+t_n,\w_0,x_0)-z(t_n+s,\w_0,x_0)}\leq M \, |t-s|\,
  \end{equation*}
  and proves that $x\in BU$, as claimed.  The positive invariance, i.e.
  $\tau_t(\mathcal{O}(\w_0,x_0))\subset \mathcal{O}(\w_0,x_0)$ for each $t>0$,
  is deduced from Proposition~\ref{continuitybolas} as follows:
  \[
    \begin{array}{l}
      \displaystyle \w=\lim_{n\to\infty}\w_0{\cdot}t_n\\
      \displaystyle x\stackrel{\di\;}=\lim_{n\to\infty} u(t_n,\w_0,x_0)
    \end{array} \Longrightarrow
    \begin{array}{l}
      \displaystyle \w{\cdot}t=\lim_{n\to\infty}\w_0{\cdot}(t+t_n)\\
      \displaystyle u(t,\w,x)\stackrel{\di\;}=\lim_{n\to\infty} u(t+t_n,\w_0,x_0)
    \end{array}
  \]
  because $u(t_n,\w_0,x_0)\in B_r$ and
  $u(t,\w_0{\cdot}t_n,u(t_n,\w_0,x_0))=u(t+t_n,\w_0,x_0)$, $n\in\N$.
  \par
  Let us check that, in fact,
  $\tau_t(\mathcal{O}(\w_0,x_0))= \mathcal{O}(\w_0,x_0)$ for each $t>0$,
  i.e. $\mathcal{O}(\w_0,x_0)$ is invariant. Fix $t>0$ and
  $(\w,x)\in \mathcal{O}(\w_0,x_0)$,
  i.e. satisfying~\eqref{omegalimitcon}. Since there is an $n_0$ such that
  $t_n-t\geq 0$ for each $n\geq n_0$, from Proposition~\ref{trajectory} we
  deduce that there exists a subsequence, which will be also denoted by
  $\{t_n\}_n$, and $(\w_1,x_1)\in \mathcal{O}(\w_0,x_0)$ such that
  \[
    \w_1=\lim_{n\to\infty}\w_0{\cdot}(t_n-t) \quad\text{ and } \quad
    x_1\stackrel{\di\;}= \lim_{n\to\infty} u(t_n-t,\w_0,x_0)\,.
  \]
  Finally, as above from Proposition~\ref{continuitybolas} we get
  \[
    \qquad \qquad\!  \w_1{\cdot}t=\lim_{n\to\infty}\w_0{\cdot}t_n=\w\quad
    \text{and}\quad u(t,\w_1,x_1) \stackrel{\di\;}= \lim_{n\to\infty}
    u(t_n,\w_0,x_0)=x \,,
  \]
  and $(\w,x)\in\tau_t(\mathcal{O}(\w_0,x_0))$, as desired.
  \par
  Once we have proved that $\mathcal{O}(\w_0,x_0)\subset \Om\times B_r$ is
  invariant, again from Proposition~\ref{continuitybolas} we deduce that the
  semiflow $\tau$ is continuous on $\R^+\times \mathcal{O}(\w_0,x_0)$ when the
  product metric topology on $\mathcal{O}(\w_0,x_0)$ is taken. To see that the
  semiflow over $\mathcal{O}(\w_0,x_0)$ admits a flow extension, from
  Theorem~2.3 (part II) of~\cite{book:shyi} it suffices to show that
  every point in $\mathcal{O}(\w_0,x_0)$ admits a unique backward orbit which
  remains inside the set $\mathcal{O}(\w_0,x_0)$. See Proposition~4.4
  of~\cite{paper:NOS2007} for the details.
\end{proof}
As explained before, this paper provides a contribution to the dynamical theory
of monotone recurrent skew-product semiflows. We consider a monotone structure
on $\Omega\times BC$ determined by an exponential ordering and we enhance the
theory started in~\cite{paper:jizh}, \cite{paper:NOS2007} and \cite{paper:MNO},
where the 1-covering property of omega-limit sets of relatively compact
trajectories was proved.
\par
Let $A$ be a diagonal matrix with negative diagonal entries
$a_1,\ldots,a_m$. Notice that such $A$ is a \emph{quasipositive} matrix,
i.e. there exists $\lambda>0$ such that $A+\lambda I$ is a matrix  whose
entries are  all nonnegative. As in~\cite{paper:NOV}, considering
the componentwise partial ordering on $\mathbb{R}^m$, we introduce the positive
cone with empty interior in $BC$
\begin{align*}
  BC^+_A&=\{ x\in BC\mid x\geq 0 \;\text{ and }\; x(t)\geq
          e^{A(t-s)}x(s)\quad
          \text{for } -\infty<s\leq t\leq 0\}\\
        &=\{ x\in BC\mid x\geq 0 \;\text{ and }\; t\mapsto
          e^{-A\,t}x(t)\text{ is a nondecreasing function}\}\,,
\end{align*}
which induces the following partial order relation on $BC$:
\begin{align}\label{orderA}
 \hspace{-.4cm} x\le_A y\;&\Longleftrightarrow\; x\le y \;\text{ and }\;
              y(t)-x(t)\geq e^{A(t-s)}(y(s)-x(s))\,,
              -\infty<s\leq t\leq 0\,,\nonumber \\
  \hspace{-.4cm}  x<_A y \;&\Longleftrightarrow\; x \leq_A y\;\;\text{and}\quad
             x\neq y\,.
\end{align}
Let us assume one additional quasimonotone condition on $F$:
\begin{enumerate}[resume*=infdelay_properties]
\item\label{F4} If $x,y\in BC$ with $x\leq_A y$, then
  $F(\w,y)-F(\w,x)\geq A\,(y(0)-x(0))$ for each $\w\in\Om$ and the above quasipositive matrix $A$.
\end{enumerate}
\par
From this hypothesis, the monotone character of the semiflow~\eqref{skewBU} and
its extension to $\Om\times BC$ are deduced. We omit the proof, analogous to
that of Proposition~3.1 of Smith and Thieme~\cite{paper:smth2}.
\begin{theorem}\label{preliminaries:monotone}
  Under assumptions~\ref{F1}--\ref{F4}, for each $\w\in\Omega$ and $x$,
  $y\in BC$ such that $x\leq_A y$, it holds that
  \[u(t,\w,x)\leq_A u(t,\w,y)\] for all $t\ge 0$ where they are defined.
\end{theorem}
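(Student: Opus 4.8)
The plan is to follow the argument behind Proposition~3.1 of~\cite{paper:smth2}, adapted to the present infinite-delay, compact-open setting. The starting point is to rewrite the equation in the form $z'(t)=A\,z(t)+G(\w{\cdot}t,z_t)$ with $G(\w,x):=F(\w,x)-A\,x(0)$, so that~\ref{F4} becomes the componentwise monotonicity ``$x\le_A y\Rightarrow G(\w,x)\le G(\w,y)$''. Fixing $\w\in\Om$ and $x\le_A y$, writing $v(t):=z(t,\w,y)-z(t,\w,x)$ (so $v(t)=y(t)-x(t)$ for $t\le 0$), and noting that $u(t,\w,x)\le_A u(t,\w,y)$ is equivalent to ``$v(\tau)\ge 0$ for $\tau\in[0,t]$ and $\tau\mapsto e^{-A\tau}v(\tau)$ nondecreasing on $(-\infty,t]$'' (the part on $(-\infty,0]$ being guaranteed by $x\le_A y$), the goal is to propagate this property forward in $t$. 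The naive approach — the order holds at a first breaking time $t^*$, hence on a neighbourhood by continuity — fails because $BC^+_A$ has empty interior, so the essential device is a perturbation of the vector field.

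For $\eps>0$ I would let $\wt z^{\,\eps}$ solve $(\wt z^{\,\eps})'(t)=A\,\wt z^{\,\eps}(t)+G(\w{\cdot}t,\wt z^{\,\eps}_t)+\eps\,\mathbf 1$ with $\wt z^{\,\eps}_0=y$ (here $\mathbf 1=(1,\dots,1)$); this equation satisfies~\ref{F1}--\ref{F4} as well, so Proposition~\ref{pro:exitBC} gives a solution which, by continuous dependence (obtained as in Proposition~\ref{continuitybolas}), is defined on any $[0,b]$ below the right endpoint of the common interval of definition of $z(\cdot,\w,x)$ and $z(\cdot,\w,y)$ once $\eps$ is small. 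Set $v^\eps(t):=\wt z^{\,\eps}(t)-z(t,\w,x)$ and $J=\{t\in[0,b]\mid z_s(\w,x)\le_A\wt z^{\,\eps}_s\text{ for all }s\in[0,t]\}$, a closed set containing $0$ (using that $\le_A$ is closed under pointwise convergence and that solutions depend continuously on $s$). Put $t^*=\sup J$ and assume $t^*<b$. Then the order holds on $[0,t^*]$, so by~\ref{F4},
\[
\tfrac{d}{ds}\big(e^{-As}v^\eps(s)\big)=e^{-As}\big(G(\w{\cdot}s,\wt z^{\,\eps}_s)-G(\w{\cdot}s,z_s(\w,x))+\eps\,\mathbf 1\big)\ge \eps\,e^{-As}\mathbf 1\gg 0
\]
for $s\in[0,t^*]$; hence $e^{-As}v^\eps(s)$ is nondecreasing on $(-\infty,t^*]$, and since it equals $v^\eps(0)=y(0)-x(0)\ge 0$ at $s=0$ we also get $v^\eps\ge 0$ on $[0,t^*]$. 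The crucial point is that $s\mapsto\tfrac{d}{ds}(e^{-As}v^\eps(s))$ is continuous near $t^*$ and strictly positive there: although $s\mapsto\wt z^{\,\eps}_s$ need not be $\n{\cdot}_\infty$-continuous because the initial datum only lies in $BC$, it is continuous for the compact-open topology and stays in a fixed ball, so~\ref{F3} makes $s\mapsto G(\w{\cdot}s,\wt z^{\,\eps}_s)$ (and likewise $s\mapsto G(\w{\cdot}s,z_s(\w,x))$) continuous. Consequently that derivative stays $\gg 0$ on some $[t^*,t^*+\delta]\subset[0,b]$, so $e^{-As}v^\eps(s)$ is nondecreasing on $(-\infty,t^*+\delta]$ and $\ge v^\eps(0)\ge 0$ for $s\ge 0$; thus $[0,t^*+\delta]\subset J$, contradicting $t^*=\sup J$. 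Hence $J=[0,b]$.

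Finally, letting $\eps\downarrow 0$ and using continuous dependence together with the fact that $\wt z^{\,\eps}$ and $z(\cdot,\w,y)$ share the initial datum $y$, one gets $\wt z^{\,\eps}_s\to u(s,\w,y)$ for each $s\in[0,b]$; the closedness of $\le_A$ then yields $u(s,\w,x)\le_A u(s,\w,y)$ for $s\in[0,b]$, and since $b$ is arbitrary this holds for all $t\ge 0$ where both solutions are defined. I expect the main obstacle to be precisely the empty interior of the cone $BC^+_A$, which rules out the standard openness argument and makes the $\eps\,\mathbf 1$-perturbation (providing a strictly positive, continuously depending lower bound for the derivative of $e^{-As}v^\eps(s)$) essential; a secondary, more technical, point is that translation is not $\n{\cdot}_\infty$-continuous on $BC$, so the required continuity of that auxiliary derivative must be extracted from~\ref{F3} rather than from~\ref{F1}.
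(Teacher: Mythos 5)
Your proof is correct and takes essentially the approach the paper intends: the paper omits the argument, referring to Proposition~3.1 of Smith and Thieme, and your $\eps\,\mathbf{1}$-perturbation of the vector field, the no-first-breaking-time argument made rigorous by the strictly positive derivative of $e^{-As}v^\eps(s)$ (with~\ref{F3} supplying the needed continuity in the $BC$ setting), and the final limit $\eps\downarrow 0$ reproduce exactly that scheme. The one step you gloss over, continuous dependence on $\eps$, is a routine Gronwall estimate using the Lipschitz bound in~\ref{F1} and the fact that $\wt z^{\,\eps}$ and $z(\cdot,\w,y)$ share the initial datum, so it poses no real gap.
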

Next, let us recall the definition of uniform stability for the order $\leq_A$.
\begin{definition}
  A subset $K$ of $BC$ is said to be \emph{uniformly stable for the order $\leq_A$}
  if, given $\eps >0$, there is a $\delta>0$ such that, if $x$, $y\in K$ satisfy
  $\di(x,y)<\delta$ and $x\leq_A y$ or $y\leq_A x$, then
  $\di(u(t,\w,x),u(t,\w,y))< \eps$ for each $t\geq 0$.
\end{definition}
In order to obtain the 1-covering property of some omega-limit sets, in addition
to Hypotheses \ref{F1}--\ref{F4}, the componentwise separating property and the
uniform stability are assumed.
\begin{enumerate}[resume*=infdelay_properties]\setlength\itemsep{0.4em}
\item\label{F5} If $(\w,x)$, $(\w,y)\in\Om\times BC$ admit a backward orbit
  extension, $x\leq_A y$, and there is a subset $J\subset\{1,\ldots,m\}$ such
  that
  \begin{align*}
    x_i=y_i & \quad \text{ for each } i\notin J\,,\\
    x_i(s)< y_i(s) & \quad \text{ for each } i\in J\;\text{ and }
                     s\leq 0\,,
  \end{align*}
  then $F_i(\w,y)-F_i(\w,x) - (A\,(y(0)-x(0)))_i> 0$ for each $i\in J$.
\item\label{F6} For each $k\in\N$, $B_k$ is uniformly stable for the order
  $\leq_A$.\smallskip
\end{enumerate}
\par
The following result follows from Theorem~5.6 of~\cite{paper:NOV}.
\begin{theorem}\label{copiaLipschitz}
  Under assumptions~\ref{F1}--\ref{F6}, we consider the monotone skew-product
  semiflow~\eqref{skewBU} induced by~\eqref{infdelay}.  Fix
  $(\w_0,x_0)\in \Om\times BC$ such that $x_0$ is Lipschitz continuous and
  $z(\cdot,\w_0,x_0)$ is bounded for the norm $\n{{\cdot}}_\infty$.  Then
  $\mathcal{O}(\w_0,x_0)=\{(\w,c(\w))\mid \w\in\Om\}$ is a copy of the base and
  \[\lim_{t\to\infty}
    \textup\di(u(t,\w_0,x_0),c(\w_0{\cdot}t))=0\,,\] where $c:\Om\to BU$ is a
  continuous equilibrium, i.e. $u(t,\w,c(\w))=c(\w{\cdot}t)$ for each $\w\in\Om$
  and $t\geq 0$.
\end{theorem}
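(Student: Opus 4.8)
The plan is to reduce the present statement to the already-established Theorem~5.6 of~\cite{paper:NOV}, whose setting is the phase space $\Omega\times BU$ with Lipschitz initial data. Fix $(\w_0,x_0)\in\Om\times BC$ with $x_0$ Lipschitz continuous and $z(\cdot,\w_0,x_0)$ bounded for $\n{\cdot}_\infty$, say $r=\sup_{t\ge 0}\n{u(t,\w_0,x_0)}_\infty$. Since $x_0$ is Lipschitz continuous and bounded, it lies in $BU$, so the trajectory $\{u(t,\w_0,x_0)\mid t\ge 0\}$ is exactly the one generated by the local semiflow~\eqref{skewBU} on $\Omega\times BU$; by Proposition~\ref{trajectory} (or directly by Theorem~8.1.4 of~\cite{book:hino}) it is relatively compact in $BC$ for the compact-open topology, and by Proposition~\ref{omegalimit} its omega-limit set $\mathcal{O}(\w_0,x_0)$ is a nonempty, compact, invariant subset of $\Omega\times BU$ admitting a flow extension. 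The hypotheses~\ref{F1}--\ref{F6} are precisely those under which the cited result of~\cite{paper:NOV} applies to bounded trajectories with Lipschitz initial data on $\Omega\times BU$; the monotonicity for $\le_A$ is Theorem~\ref{preliminaries:monotone}, the componentwise separation is~\ref{F5}, and the uniform stability of each $B_k$ for $\le_A$ is~\ref{F6}.

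First I would verify that the structural hypotheses used in Theorem~5.6 of~\cite{paper:NOV} are covered by~\ref{F1}--\ref{F6} together with the preliminary propositions of this section: the only point needing care is that~\cite{paper:NOV} may phrase things in terms of continuity and compactness properties that here follow from~\ref{F3} via Propositions~\ref{pro:exitBC}, \ref{continuitybolas}, \ref{trajectory} and~\ref{omegalimit}. Granting that, the conclusion of the cited theorem gives that $\mathcal{O}(\w_0,x_0)$ is a $1$-covering of $\Om$, i.e. $\mathcal{O}(\w_0,x_0)=\{(\w,c(\w))\mid\w\in\Om\}$ for a continuous map $c\colon\Om\to BU$, and that $c$ is an equilibrium, $u(t,\w,c(\w))=c(\w{\cdot}t)$ for all $\w\in\Om$, $t\ge 0$. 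The asymptotic formula $\lim_{t\to\infty}\di(u(t,\w_0,x_0),c(\w_0{\cdot}t))=0$ is then a standard consequence of the $1$-covering property: given $\eps>0$, by compactness of $\mathcal{O}(\w_0,x_0)$ and continuity of $c$ one finds $\delta>0$ so that $\di(\w,\w')<\delta$ forces $\di(c(\w),c(\w'))<\eps/2$; any subsequential limit of $(\w_0{\cdot}t_n,u(t_n,\w_0,x_0))$ lies in the omega-limit set, hence equals $(\w,c(\w))$ for the limiting $\w$, so $\di(u(t_n,\w_0,x_0),c(\w_0{\cdot}t_n))\to 0$ along every sequence $t_n\uparrow\infty$, which yields the full limit.

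The main obstacle is bookkeeping rather than mathematics: one must check that the hypothesis list of Theorem~5.6 in~\cite{paper:NOV} is implied by~\ref{F1}--\ref{F6}, because that paper works with a possibly different but equivalent packaging of the regularity and order assumptions (in particular, it is phrased for data in $BU$ with the compact-open-topology continuity on balls already built in). Once this identification is made — and here it is immediate, since the present~\ref{F1}, \ref{F3}--\ref{F6} were written precisely to match — there is nothing further to prove: the result is a direct transcription of Theorem~5.6 of~\cite{paper:NOV} to the trajectory through the Lipschitz point $(\w_0,x_0)$, and the convergence statement follows from the $1$-covering property exactly as above.
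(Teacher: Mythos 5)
Your proposal is correct and is essentially the paper's own argument: the paper gives no independent proof of Theorem~\ref{copiaLipschitz}, but simply observes that it follows from Theorem~5.6 of~\cite{paper:NOV}, exactly the reduction you describe (Lipschitz continuous bounded data lie in $BU$, and \ref{F1}--\ref{F6} supply the hypotheses of that theorem). Your additional remark deriving the convergence $\lim_{t\to\infty}\di(u(t,\w_0,x_0),c(\w_0{\cdot}t))=0$ from the $1$-covering property via relative compactness and continuity of $c$ is a standard and correct supplement.
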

The aim of the rest of this section is to extend the previous characterization
to a more general class of initial data, not necessarily Lipschitz
continuous. More precisely, the functions $x$ of $BC$ satisfying the following
property:
\begin{enumerate}[label={\upshape({\bf R})},leftmargin=23pt]
\item\label{functional:regularity} $x$ is of bounded variation componentwise on
  $[-k,-k+1]$ for all $k\in\N$ and
  \begin{equation*}
    \sup\left\{V_{[-k,-k+1]}(x_i) \mid i\in\{1,\ldots,m\},\,k\geq
      1\right\}<\infty\,,
  \end{equation*}
\end{enumerate}
where $V_{[a,b]}(f)$ denotes the total variation of the scalar function $f\colon [a,b]\to \R$ on the interval $[a,b]$. \par
Note that the subset $\mathcal{R}$ of all the functions in $BC$ satisfying
property~\ref{functional:regularity} is a vector subspace of $BC$. Moreover,
$\mathcal{R}$ is a Banach space when endowed with the norm defined for
$x \in \mathcal{R}$ by
\[
  \n{x}_R=\n{x}_\infty + \sup\left\{V_{[-k,-k+1]}(x_i) \mid
    i\in\{1,\ldots,m\},\,k\geq 1\right\}\,.
\]
\par 
Let us prove a useful characterization of this property in terms of the
existence of a common upper bound of $x$ and 0 for the exponential ordering
$\leq_A$. It is noteworthy that property~\ref{functional:regularity} does not
depend on the choice of the quasipositive matrix $A$.  We will denote by
$e^{a{\cdot}}$ the function $(-\infty,0]\to\R$, $t\mapsto e^{a t}$ for each
$a\in\R$.
\begin{proposition}\label{functional:characterization}
  Let $x\in BC$. The following statements are equivalent:
  \begin{enumerate}[label={\upshape(\roman*)},series=char]
  \item\label{functional:char_regular} $x$ satisfies
    property~\ref{functional:regularity};
  \item\label{functional:char_BC} there exists $h\in BC$ such that $h\geq_A x$
    and $h\geq_A 0$.
  \end{enumerate}
\end{proposition}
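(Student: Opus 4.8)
The plan is to prove the two implications separately, exploiting the second description of $BC_A^+$ (the one in terms of $t\mapsto e^{-At}x(t)$ being nondecreasing componentwise) since it converts the order condition into a monotonicity statement that interacts cleanly with bounded variation. Throughout, since both property~\ref{functional:regularity} and the order $\leq_A$ are defined componentwise, it suffices to argue one component at a time; so I would fix $i\in\{1,\ldots,m\}$ and write $a=a_i<0$, reducing to a scalar problem on $(-\infty,0]$.

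For \ref{functional:char_BC}$\Rightarrow$\ref{functional:char_regular}: suppose $h\in BC$ with $h\geq_A x$ and $h\geq_A 0$. Componentwise this says $t\mapsto e^{-at}(h(t)-x(t))$ and $t\mapsto e^{-at}h(t)$ are both nondecreasing on $(-\infty,0]$. Hence $t\mapsto e^{-at}x(t) = e^{-at}h(t) - e^{-at}(h(t)-x(t))$ is the difference of two nondecreasing functions, so it has bounded variation on every compact interval; consequently so does $x(t)=e^{at}\cdot e^{-at}x(t)$, being the product of a $C^1$ (hence locally Lipschitz, locally bounded) function with a function of locally bounded variation. To get the \emph{uniform} bound on $V_{[-k,-k+1]}(x_i)$, I would estimate: on $[-k,-k+1]$ a nondecreasing function $g$ has total variation $g(-k+1)-g(-k)\le (e^{a(-k+1)})^{-1}\cdot 2\n{\cdot}_\infty$-type bound — more precisely, writing $g(t)=e^{-at}h(t)$, its increment over $[-k,-k+1]$ is at most $e^{-a(-k+1)}\n{h}_\infty - e^{-a(-k)}(-\n{h}_\infty)\le e^{ak}(e^{-a}+1)\n{h}_\infty$ (using $-a>0$, so $e^{-at}$ is increasing and bounded by $e^{-a(-k+1)}=e^{a(k-1)}$ on that interval... let me keep the bookkeeping loose here). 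Then multiplying back by $e^{at}$, which on $[-k,-k+1]$ is bounded by $e^{a(-k)}=e^{-ak}$ in absolute reciprocal sense, the factors $e^{ak}$ and $e^{-ak}$ cancel and one is left with a bound independent of $k$. The same works for $e^{-at}(h(t)-x(t))$, and $V_{[-k,-k+1]}(x)\le e^{-a}\bigl(V_{[-k,-k+1]}(e^{-a\cdot}h)+V_{[-k,-k+1]}(e^{-a\cdot}(h-x))\bigr)$ plus the contribution from the Lipschitz factor, giving the uniform bound.

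For \ref{functional:char_regular}$\Rightarrow$\ref{functional:char_BC}: given $x$ satisfying \ref{functional:regularity}, I would construct $h$ explicitly, componentwise, by the formula
\[
  h_i(t) = e^{a_i t}\,\sup_{s\le t}\bigl(e^{-a_i s}\max(x_i(s),0)\bigr)
\]
(or a minor variant), the point being that $e^{-a_i t}h_i(t)=\sup_{s\le t}e^{-a_i s}\max(x_i(s),0)$ is by construction nondecreasing in $t$, which gives both $h\ge_A 0$ and $h\ge_A x$ once one also checks $h\ge x$ and $h\ge 0$ pointwise (immediate from taking $s=t$ in the sup). The work is to verify $h\in BC$, i.e. $h$ is bounded and continuous: boundedness needs the uniform-variation hypothesis, since a priori $e^{-a_i s}x_i(s)$ blows up like $e^{-a_i s}$ as $s\to-\infty$, but the bounded-variation control on each unit interval bounds the increments and lets one show $e^{-a_i s}x_i(s)-e^{-a_i s}\inf_{[s,0]} \ldots$ stays controlled — concretely, I expect the estimate $\sup_{s\le t}e^{-a_i s}\max(x_i(s),0) \le C$ with $C$ depending on $\n{x}_\infty$, $a_i$ and the uniform variation constant, obtained by summing a geometric series $\sum_k e^{a_i k}\cdot(\text{unit-interval variation})$. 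Continuity of $h$ follows from continuity of $x$ together with the fact that a locally uniform sup of continuous functions of this telescoping type is continuous.

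The main obstacle I anticipate is the quantitative bookkeeping in both directions — specifically, showing that the exponential weights $e^{-a_i t}$ (which are unbounded at $-\infty$) do not destroy either the uniformity of the variation bound (forward direction) or the boundedness of the constructed $h$ (reverse direction). In both cases this hinges on the same mechanism: the geometric decay of $e^{a_i k}$ across successive unit intervals beats any fixed per-interval growth, so the sup/variation over $(-\infty,0]$ is controlled by a convergent geometric series whose terms are (uniform variation constant)$\times e^{a_i k}$. I would isolate this as a short computational lemma and then both implications become bookkeeping around it. The remark that property~\ref{functional:regularity} is independent of the choice of $A$ then follows for free, since \ref{functional:char_regular} makes no reference to $A$ while \ref{functional:char_BC} does, and the proposition says they are equivalent for every admissible $A$.
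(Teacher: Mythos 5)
Your implication \ref{functional:char_BC} $\Rightarrow$ \ref{functional:char_regular} is essentially the paper's argument and is correct modulo bookkeeping: write $e^{-a t}x(t)$ as the difference of the two nonnegative, bounded, nondecreasing functions $e^{-at}h(t)$ and $e^{-at}\bigl(h(t)-x(t)\bigr)$, bound the increment of each over $[-k,-k+1]$ by an exponential in $k$, and multiply back by $e^{a\cdot}$ so that the exponential factors cancel, giving a bound on $V_{[-k,-k+1]}(x_i)$ independent of $k$.

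The direction \ref{functional:char_regular} $\Rightarrow$ \ref{functional:char_BC} has a genuine gap. Your candidate $h_i(t)=e^{a_i t}\sup_{s\le t}\bigl(e^{-a_i s}\max(x_i(s),0)\bigr)$ does make $t\mapsto e^{-a_i t}h_i(t)$ nondecreasing, hence $h\geq_A 0$, and it dominates $x$ pointwise; but $h\geq_A x$ is not a pointwise condition: it also requires $t\mapsto e^{-a_i t}\bigl(h_i(t)-x_i(t)\bigr)$ to be nondecreasing, i.e.\ that the running supremum of $u:=e^{-a_i\cdot}\max(x_i,0)$ minus $u$ itself be nondecreasing. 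This fails in general: whenever $u$ increases while staying strictly below its running maximum, $\sup_{s\le t}u(s)-u(t)$ strictly decreases, and such oscillations occur for functions satisfying property~\ref{functional:regularity} as well. There is also a sign slip that hides the problem: since $a_i<0$ and $s\le 0$, the weight $e^{-a_i s}$ is bounded by $1$ and tends to $0$ (not to $\infty$) as $s\to-\infty$, so your $h$ is bounded by $\n{x}_\infty$ for \emph{every} $x\in BC$ without any use of property~\ref{functional:regularity}; if the construction really gave $h\geq_A x$, then statement \ref{functional:char_BC} would hold for all of $BC$, contradicting the equivalence, since property~\ref{functional:regularity} singles out a proper subspace. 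The missing idea — and what the paper does — is to take $e^{-a_i t}h_i(t)$ to be the running \emph{total variation} $V_{(-\infty,t]}(e^{-a_i\cdot}x_i)$ rather than the running supremum: both $t\mapsto V_{(-\infty,t]}(u)$ and $t\mapsto V_{(-\infty,t]}(u)-u(t)$ are nondecreasing, which yields $h\geq_A 0$ and $h\geq_A x$, and it is precisely the boundedness of this $h$ where property~\ref{functional:regularity} and your geometric-series mechanism enter (the variation of $e^{-a_i\cdot}x_i$ over $[-j,-j+1]$ is at most a fixed constant times $e^{a_i(j-1)}$, so the series over $j$ converges and gives $V_{(-\infty,t]}(e^{-a_i\cdot}x_i)\le C\,e^{-a_i t}$).
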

\begin{proof}
  Since $A$ is a diagonal matrix, we may assume without loss of generality that
  we are dealing with a scalar problem, i.e. $m=1$ and $A=(-a)$ for some
  $a>0$. \par
  (i) $\Rightarrow$ (ii) Let $c=\sup_{k\geq 1} V_{[-k,-k+1]}(x)$. We fix
  $t\in(-\infty,0]$ and let $\lceil t \rceil$ denote the integer part of the
  negative real number $t$, i.e.  $\lceil t \rceil-1< t\leq \lceil t \rceil$.
  Then, taking into account the properties of the bounded variation of the
  product of two functions and the increasing character of $e^{a{\cdot}}$, we
  deduce that
  \begin{align*}
    V_{(-\infty,t]}(e^{a{\cdot}}\,x) & \leq V_{(-\infty,\lceil t \rceil]}(e^{a{\cdot}}x)\leq \sum_{j=-\lceil t \rceil +1}^\infty V_{[-j,-j+1]}(e^{a{\cdot}}x) \\
                                     & \leq \sum_{j=-\lceil t \rceil +1}^\infty \left[ e^{a\,(-j+1)} V_{[-j,-j+1]}(x) +(e^{a\,(-j+1)}- e^{-a\,j}) \n{x}_\infty\right]\\
                                     &\leq \left[ c\,e^a+\n{x}_\infty(e^a-1)\right] \sum_{j=-\lceil t \rceil +1}^\infty e^{-a j}=C \, e^{a\,(\lceil t \rceil-1)}\leq C\,e^{a\,t}\,,
  \end{align*}
  where $C=(c \,e^a+\n{x}_\infty(e^a-1))/(1-e^{-a})$. This proves that
  $e^{a{\cdot}}\,x$ is a function of bounded variation on $(-\infty,0]$ and we
  can define $h$ as follows:
  \begin{equation*}
    \begin{array}[t]{cccl}
      h \colon& (-\infty,0]&\longrightarrow&\R\\
              & t&\mapsto& e^{-a\,t}\,V_{(-\infty,t]}(e^{a{\cdot}}\,x),
    \end{array}
  \end{equation*}
  which is clearly bounded by $C$. The continuity of $h$ follows from that of
  $e^{a{\cdot}}x$ (see Ex.~4 on p.~137 of Cohn~\cite{book:cohn}).  Moreover,
  from Proposition~4.4.2 of~\cite{{book:cohn}} the functions
  $e^{a \cdot}\,h:t \mapsto V_{(-\infty,t]}(e^{a{\cdot}}\,x)$ and
  $e^{a\cdot } (h-x):t \mapsto V_{(-\infty,t]}(e^{a{\cdot}}\,x)- e^{at}x(t)$ are
  nonnegative and nondecreasing, which implies that $h\geq_Ax$ and $h\geq_A0$
  and (ii) holds.
  \par
  (ii) $\Rightarrow$ (i) Since $A=(-a)$, from $h\geq_A 0$ and $h\geq_A x$, we
  deduce that $e^{a \cdot } \,h$ and $e^{a \cdot }\,(h-x)$ are nonnegative and
  nondecreasing. Consequently, the function
  $e^{a \cdot } \,x= e^{a \cdot } \,h- e^{a \cdot }\,(h-x)$ is of bounded
  variation on $(-\infty,0]$ and hence on $[-k,-k+1]$ for each
  $k\in\N$. Moreover,
  \begin{multline*}
    V_{[-k,-k+1]}(e^{a{\cdot}} x) \leq V_{[-k,-k+1]}(e^{a{\cdot}} h)+V_{[-k,-k+1]}(e^{a{\cdot}} (h-x))\\
   \leq e^{a\,(-k+1)}h(-k+1) + e^{a\,(-k+1)}(h(-k+1)-x(-k+1))\leq D\,
    e^{-a\,k}\,,
  \end{multline*}
  where $D=e^a(2\n{h}_\infty+\n{x}_\infty)$. In addition, $e^{-a\cdot}$ is also
  of bounded variation on $[-k,-k+1]$, whence we deduce the same for
  $x=e^{-a \cdot }e^{a \cdot } x$. Therefore,
  \begin{equation*}
    V_{[-k,-k+1]}(x)\leq e^{ak}\,D\,e^{-a\,k}+ e^{a\,k}e^{a\,(-k+1)}\n{x}_\infty=D+e^a\n{x}_\infty\,,
  \end{equation*}
  which is a bound irrespective of $k$ and (i) holds.
\end{proof}
\begin{remark}
  Notice that all Lipschitz continuous initial data satisfy
  property~\ref{functional:regularity}, but the converse does not
  hold. Actually, there exist functions in $BC$ satisfying
  property~\ref{functional:regularity} and which are not in $BU$.
\end{remark}
A special choice of the function $h$ of
Proposition~\ref{functional:characterization} will be important for later
purposes.
\begin{lemma}\label{h0}
  Fix $x_0\in BC$ satisfying property~\ref{functional:regularity}.  Then there
  exists $h_0\in BC$ which, in addition to $h_0\geq_A x_0$ and $h_0\geq_A 0$,
  satisfies
  \begin{equation*}
    h_0\geq _A x_0-x_0(0)\,,
  \end{equation*}
  where $x_0(0)$ represents the constant function with that value.
\end{lemma}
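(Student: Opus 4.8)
The plan is to deduce $h_0$ from Proposition~\ref{functional:characterization} and then enlarge the resulting upper bound by a constant function so as to accommodate the third inequality $h_0\geq_A x_0-x_0(0)$. The preliminary observations I would record are: by~\eqref{orderA}, for $v,w\in BC$ one has $v\leq_A w$ exactly when $w-v\in BC^+_A$, so in particular $BC^+_A$ is closed under addition and $\leq_A$ is invariant under translation by constants; and every constant function $p\in BC$ with $p\geq 0$ componentwise belongs to $BC^+_A$, because the diagonal entries $a_i$ of $A$ are negative, hence $e^{A(t-s)}$ has entries $e^{a_i(t-s)}\leq 1$ whenever $s\leq t$, which gives $p\geq e^{A(t-s)}p$ componentwise.

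With this in hand the construction is short. Since $x_0$ satisfies property~\ref{functional:regularity}, Proposition~\ref{functional:characterization} yields $h\in BC$ with $h\geq_A x_0$ and $h\geq_A 0$. I would then let $p\in BC$ be the constant function whose $i$-th coordinate equals $|x_0(0)_i|$ for $i=1,\dots,m$, and put $h_0:=h+p\in BC$. The three required relations then follow by exhibiting the relevant differences as sums of elements of $BC^+_A$: namely $h_0-0=(h-0)+p$, $h_0-x_0=(h-x_0)+p$, and $h_0-(x_0-x_0(0))=(h-x_0)+(p+x_0(0))$, where in the last case $p+x_0(0)$ is the constant function with coordinates $|x_0(0)_i|+x_0(0)_i\geq 0$, hence lies in $BC^+_A$; the term $h-x_0$ is in $BC^+_A$ because $h\geq_A x_0$, and $p\in BC^+_A$ by the preliminary observation, so each sum is again in $BC^+_A$.

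I do not expect a genuine obstacle here. The only point requiring a little care is noticing that nonnegative constant functions lie in $BC^+_A$ — a consequence of the sign of the diagonal of $A$, which makes $e^{A(t-s)}$ coordinatewise $\leq I$ for $t\geq s$ — together with choosing the added constant to be the \emph{componentwise} absolute value of $x_0(0)$, so that $p$ simultaneously dominates $0$ and $-x_0(0)$ after adding $x_0(0)$ back in.
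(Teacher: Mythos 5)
Your proposal is correct and follows essentially the same route as the paper: both start from the function $h$ supplied by Proposition~\ref{functional:characterization}(ii) and add a nonnegative constant function large enough to dominate $-x_0(0)$ componentwise, relying on the fact that nonnegative constants lie in $BC^+_A$ because the diagonal entries of $A$ are negative. The only (immaterial) difference is the choice of constant: the paper adds $\n{x_0}_\infty$ in every component, whereas you add the componentwise absolute value $|x_0(0)_i|$.
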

\begin{proof}
  If $\wt h_0\in BC$ is the function given in
  Proposition~\ref{functional:characterization}(ii), then the function
  $h_0=\wt h_0+\n{x_0}_\infty$, where $\n{x_0}_\infty$ represents the constant
  function from $(-\infty,0]$ into $\R^m$ with that value in all the components,
  satisfies the desired properties.
\end{proof}
The main theorem of the section provides the 1-covering property of omega-limit
sets when the initial data $x_0$ are in $BC$ and satisfies
property~\ref{functional:regularity}.
\begin{theorem}\label{copiabasenuevo}
  Let $(\w_0,x_0)\in \Om\times BC$. Under assumptions~\ref{F1}--\ref{F6}, if
  $x_0$ satisfies property~\ref{functional:regularity} and $z(\cdot,\w_0,x_0)$
  is a bounded solution of~\eqref{infdelay}$_{\w_0}$, the omega-limit set
  $\mathcal{O}(\w_0,x_0)=\{(\w,c(\w))\mid \w\in\Om\}$ is a copy of the base and
  \[\lim_{t\to\infty}
    \textup\di(u(t,\w_0,x_0),c(\w_0{\cdot}t))=0\,,\] where $c:\Om\to BU$ is a
  continuous equilibrium, i.e. $u(t,\w,c(\w))=c(\w{\cdot}t)$ for each $\w\in\Om$
  and $t\geq 0$, and it is continuous for the compact-open topology on $BU$.
\end{theorem}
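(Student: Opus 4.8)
The plan is to reduce Theorem~\ref{copiabasenuevo} to Theorem~\ref{copiaLipschitz}, which already handles Lipschitz continuous initial data, by squeezing the trajectory of $(\w_0,x_0)$ between two trajectories emanating from more regular data. First I would invoke Lemma~\ref{h0} to obtain $h_0\in BC$ with $h_0\geq_A x_0$, $h_0\geq_A 0$, and $h_0\geq_A x_0-x_0(0)$. The key observation is that $h_0$, being of the form $t\mapsto e^{-At}V_{(-\infty,t]}(e^{A\cdot}x_0)$ up to an additive constant, is of bounded variation and in fact lies in $BU$ (it is monotone in each component after multiplication by $e^{-A\cdot}$, hence has controlled oscillation); more to the point, by Proposition~\ref{functional:characterization} applied to $h_0$ itself, or directly, one checks $h_0$ satisfies property~\ref{functional:regularity}. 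Actually the cleanest route is: both $h_0$ and the constant function $x_0(0)$ are $\leq_A$-comparable with $x_0$ in a way that brackets it, namely $x_0(0)\leq_A x_0+\text{something}$ — but the honest bracketing is $0\leq_A h_0$ and $x_0\leq_A h_0$ together with $x_0-x_0(0)\leq_A h_0$, which gives $x_0\leq_A h_0+x_0(0)$ and, combining, a constant lower bound $x_0(0)-\n{x_0}_\infty\cdot\mathbf{1}\leq_A x_0$ is \emph{not} generally true, so one must be more careful.

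The workable bracketing uses monotonicity (Theorem~\ref{preliminaries:monotone}) as follows. Consider the trajectory starting at $h_0$ and the trajectory starting at $x_0-h_0+$ (a suitable constant), both of which are $\leq_A$-comparable to $x_0$. Since $h_0$ is regular (BV with uniform bound) and moreover can be taken in $BU$ after the additive-constant trick, and since $x_0-h_0$ is the difference of two regular functions hence regular, one can find Lipschitz — or at least $BU$ and regular — functions $y_0\leq_A x_0\leq_A w_0$ with $y_0,w_0$ satisfying property~\ref{functional:regularity}, by approximating the BV functions from within their $\leq_A$-order intervals. The boundedness of $z(\cdot,\w_0,x_0)$ for $\n{\cdot}_\infty$ together with the order relations and Theorem~\ref{preliminaries:monotone} forces $z(\cdot,\w_0,y_0)$ and $z(\cdot,\w_0,w_0)$ to be bounded as well (they are trapped, using that $B_k$ is uniformly stable by~\ref{F6} and that the semiflow is monotone). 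Then Theorem~\ref{copiaLipschitz} — or rather its extension, which is exactly what we are bootstrapping, so one must instead use that the omega-limit sets of $y_0$ and $w_0$ are already copies of the base, handled by the Lipschitz case after a further approximation — yields continuous equilibria $c_-\leq_A c_+$ with $\mathcal{O}(\w_0,y_0)=\{(\w,c_-(\w))\}$ and $\mathcal{O}(\w_0,w_0)=\{(\w,c_+(\w))\}$.

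The crucial step is then to show $c_-=c_+$, which collapses the sandwich and pins $\mathcal{O}(\w_0,x_0)$ to their common graph. This is where uniform stability~\ref{F6} and the componentwise separating property~\ref{F5} enter, exactly as in the proof of Theorem~5.6 of~\cite{paper:NOV}: the difference $c_+-c_-$ is a nonnegative equilibrium difference, uniform stability prevents it from having components that stay strictly positive unless~\ref{F5} is violated along the backward-extended orbits inside the omega-limit set (which admit flow extensions by Proposition~\ref{omegalimit}), so a standard argument using the semicontinuity of the separation and minimality of $\Om$ forces $c_-=c_+=:c$. Monotonicity then gives $c(\w_0{\cdot}t)=u(t,\w_0,y_0)\leq_A u(t,\w_0,x_0)\leq_A u(t,\w_0,w_0)=c(\w_0{\cdot}t)$ in the limit along the sequences $t_n\uparrow\infty$ defining points of $\mathcal{O}(\w_0,x_0)$, whence $\mathcal{O}(\w_0,x_0)=\{(\w,c(\w))\}$ and $\textup\di(u(t,\w_0,x_0),c(\w_0{\cdot}t))\to 0$; the compact-open continuity of $c:\Om\to BU$ is inherited from Theorem~\ref{copiaLipschitz}.

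**Main obstacle.** The delicate point is producing genuinely Lipschitz (or at least $BU$) order-comparable bounds $y_0\leq_A x_0\leq_A w_0$ that still satisfy property~\ref{functional:regularity} and whose trajectories remain bounded — the BV function $h_0$ from Lemma~\ref{h0} need not be Lipschitz or even uniformly continuous, so one cannot directly feed it into Theorem~\ref{copiaLipschitz}. Resolving this requires either an intermediate approximation lemma (Lipschitz functions are $\leq_A$-cofinal and $\leq_A$-coinitial among regular functions in a way compatible with $x_0$), or a direct extension of the Jiang–Zhao/Novo–Obaya convergence argument to the larger class $\mathcal{R}$, showing that the 1-covering conclusion only used the order boundedness of $x_0$ above and below by regular functions rather than Lipschitz continuity per se. I expect the paper takes the latter, more economical route: re-running the monotone-semiflow argument with $h_0$ playing the role previously played by a Lipschitz bound, the only property of $h_0$ actually needed being that it is itself in the domain where the 1-covering result holds, which property~\ref{functional:regularity} guarantees via an inductive/self-referential application restricted to the already-established BU case.
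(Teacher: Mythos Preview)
Your sandwich strategy has two genuine gaps. First, property~\ref{functional:regularity} together with Proposition~\ref{functional:characterization} and Lemma~\ref{h0} furnishes only a common \emph{upper} bound $h_0\geq_A x_0$, $h_0\geq_A 0$; there is no mechanism producing a Lipschitz (or even $\mathcal{R}$-class) function $y_0$ with $y_0\leq_A x_0$, and you yourself note that the obvious candidates fail. Second, even granting a bracket $y_0\leq_A x_0\leq_A w_0$ with both ends Lipschitz and with bounded forward orbits, your collapse step $c_-=c_+$ is not justified: hypotheses~\ref{F5} and~\ref{F6} do not force two distinct ordered copies of the base to coincide --- many monotone semiflows satisfying these hypotheses carry whole order-intervals of equilibrium sections --- and the Jiang--Zhao/Novo--Obaya argument you cite does not proceed that way. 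Finally, boundedness of $u(t,\w_0,w_0)$ does not follow from $x_0\leq_A w_0$ and boundedness of $u(t,\w_0,x_0)$, since $\leq_A$ controls only one side.

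The paper sidesteps all of this. It first picks any $(\wt\w_0,\wt x_0)\in\mathcal{O}(\w_0,x_0)$; because this point admits a full backward orbit inside the omega-limit set, $\wt x_0$ is automatically Lipschitz, and Theorem~\ref{copiaLipschitz} yields the copy $\{(\w,c(\w))\}\subset\mathcal{O}(\w_0,x_0)$ directly. The remaining work is to show $\di(u(t,\w_0,x_0),c(\w_0{\cdot}t))\to 0$. This is done by a one-sided argument at large times: along a sequence $t_n\uparrow\infty$ with $\w_0{\cdot}t_n\to\w_0$, one replaces $u(t_n,\w_0,x_0)$ by its Lipschitz truncation $v_n$ (constant equal to $x_0(0)$ on $(-\infty,-t_n]$), builds the usual $\leq_A$-common upper bound $b_{v_n,c_n}$ of $v_n$ and $c_n=c(\w_0{\cdot}t_n)$, and then corrects it by the shifted function $\overline{h}_{t_n}$ coming from Lemma~\ref{h0} so that $g_n:=b_{v_n,c_n}+\overline{h}_{t_n}$ satisfies both $u(t_n,\w_0,x_0)\leq_A g_n$ and $c_n\leq_A g_n$, with $\di(g_n,u(t_n,\w_0,x_0))\to 0$ and $\di(g_n,c_n)\to 0$. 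Uniform stability~\ref{F6}, applied twice with the common upper bound $g_n$, then gives $\di(u(t+t_n,\w_0,x_0),c(\w_0{\cdot}(t+t_n)))<\eps$ for all $t\ge 0$ and large $n$. Thus only an upper $\leq_A$-bound is ever needed, and it is needed not on the initial datum $x_0$ but on the evolved states $u(t_n,\w_0,x_0)$ --- this is precisely what Lemma~\ref{h0} (through $\overline{h}_{t_n}$) is designed to provide.
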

\begin{proof}
  Fix a point $(\wt \w_0,\wt x_0)\in \mathcal{O}(\w_0,x_0)$. As explained
  before, every point in the omega-limit set $\mathcal{O}(\w_0,x_0)$ admits a
  unique backward orbit which remains inside the set
  $\mathcal{O}(\w_0,x_0)$. From this fact, we deduce that
  $u(t,\wt \w_0,\wt x_0)$ is defined for each $t\in\R$ and $\wt x_0$ is
  continuously differentiable because $\wt x_0(t)= z(t,\wt \w_0,\wt x_0)$ for
  each $t\in (-\infty,0]$. Therefore, from~\eqref{infdelay}$_{\wt \w_0}$
  and~\eqref{F2}, the Lipschitz character of $\wt x_0$ is deduced. An
  application of Theorem~\ref{copiaLipschitz} yields
  $\mathcal{O}(\wt\w_0,\wt x_0)=\{(\w,c(\w))\mid \w\in\Om\}$ for a continuous
  equilibrium $c:\Om\to BU$. We claim that
  $\mathcal{O}(\wt\w_0,\wt x_0)=\mathcal{O}(\w_0, x_0)$, which will finish the
  proof.  We know that
  $\mathcal{O}(\wt\w_0,\wt x_0) \subset \mathcal{O}(\w_0, x_0)$; to check the
  coincidence of both sets, it is enough to prove that, given $\eps >0$, there
  is a $T>0$ such that
  \[\di(u(t,\w_0,x_0),c(\w_0{\cdot}t))<\eps\quad
    \text{ for each } t> T \,.\]
  \par
  Since $(\w_0,c(\w_0))\in \mathcal{O}(\w_0, x_0)$, there exists a sequence
  $t_n\uparrow \infty$ such that
  \[\w_0=\lim_{n\to\infty} \w_0{\cdot}t_n
    \quad \text{ and } \quad c(\w_0)\stackrel{\di\;}=\lim_{n\to\infty}
    u(t_n,\w_0,x_0)\,.\] However, since
  $u(t_n,\w_0,c(\w_0))=c(\w_0{\cdot}t_n) \stackrel{\di\;}\to c(\w_0)$ as
  $n\uparrow \infty$, we deduce that
  \begin{equation}\label{limittn}
    \lim_{n\to\infty}\di(u(t_n,\w_0,x_0),c(\w_0{\cdot}t_n))=0\,.
  \end{equation}
  Next, we will approximate $u(t,\w_0,x_0)$ by the Lipschitz continuous function
  of $BU$ $v(t,\w_0,x_0)$ defined by
  \begin{equation}\label{vx0r}
    v(t,\w_0,x_0)(s)=\begin{cases} u(t,\w_0,x_0)(s) & \text{if } s\in[-t,0]\,,\\
      x_0(0)           & \text{if } s\in(-\infty,-t]\,. \\
    \end{cases}
  \end{equation}
  It is immediate to see that
  \begin{equation}\label{vx0r-ux0r}
    \lim_{t\to\infty} \di\big(u(t,\w_0,x_0),v(t,\w_0,x_0)\big)=0\,.
  \end{equation}
  We introduce the following auxiliary continuous functions defined from the
  function $h_0$ provided by Lemma~\ref{h0}:
  \begin{align*}
    &\overline{h}\colon \R \to \R^m\,,\qquad \qquad\;  s \mapsto  \begin{cases}
      e^{As}\,h_0(0) & \text{ if } s>0\,, \\
      h_0(s)       & \text{ if } s\leq 0\,,
    \end{cases} \\[.1cm]
    &\overline{h}_T \colon (-\infty,0]\to \R^m\,, \quad s\mapsto
      \overline{h}(s+T)\,, \quad T\in\R\,.
  \end{align*}
  It is easy to check that
  \begin{equation}\label{propiedades-hT}
    \overline{h}_T\geq_A 0\; \text{ for each }T\in \R  \quad \text{and}\quad \overline{h}_T \stackrel{\di\;}\to 0\; \text{ as }\; T\to \infty\,.
  \end{equation}
  From~\eqref{limittn}, \eqref{vx0r-ux0r} and \eqref{propiedades-hT}, it follows
  that
  \begin{align}
    &\lim_{n\to\infty} \di(u(t_n,\w_0,x_0),c(\w_0{\cdot}t_n))=0\,,\label{limitesTn1}\\[2pt]
    & \lim_{n\to\infty} \di\big(u(t_n,\w_0,x_0),v(t_n,\w_0, x_0)\big)=0\,, \label{limitesTn2}\\[2pt]
    & \lim_{n\to\infty} \di\big(\overline{h}_{t_n},0\big)=0\,, \label{limitesTn3}
  \end{align}
  Therefore, denoting for simplicity by $c_n$ and $v_n$ the functions of $BU$
  \[ c_n=c(\w_0{\cdot}t_n) \quad \text{and} \quad v_n=v(t_n,\w_0, x_0)\,,\]
  from~\eqref{limitesTn1} and~\eqref{limitesTn2}, it follows that
  \begin{equation}\label{limitesTn4}
    \lim_{n\to\infty}\di(c_n,v_n)=0\,.
  \end{equation}
  Next, as in Proposition~4.4 of~\cite{paper:NOV}, we define the functions
  $a_{v_n,c_n},\, b_{v_n,c_n}$ of $BU$ by
  \begin{equation*}
    \begin{array}{ccl}
      a_{v_n,c_n}\colon(-\infty,0]&\longrightarrow &\R^m\\
      \quad\;\; s &\mapsto& {\displaystyle \int_{-\infty}^s
                            e^{A(s-\tau)}\inf\{v_n'(\tau)-A\,v_n(\tau), c_n'(\tau)-A \,c_n(\tau)\}\,
                            d\tau \,,}
    \end{array}
  \end{equation*}
  \begin{equation*}
    \begin{array}{ccl}
      b_{v_n,c_n}\colon(-\infty,0]&\longrightarrow &\R^m\\
      \quad\;\; s &\mapsto& {\displaystyle \int_{-\infty}^s
                            e^{A(s-\tau)}\sup\{v_n'(\tau)-A\,v_n(\tau), c_n'(\tau)-A \,c_n(\tau)\}\,
                            d\tau \,,}
    \end{array}
  \end{equation*}
  which satisfy $a_{v_n,c_n}\leq_A v_n \leq_A b_{v_n,c_n}$ and
  $a_{v_n,c_n}\leq_A c_n \leq_A b_{v_n,c_n}$. Moreover, for each $s\le 0$, we
  have $\n{b_{v_n,c_n}(s)-c_n(s)}\leq \n{a_{v_n,v_n}(s)-b_{v_n,c_n}(s)}$, whence
  \begin{equation*}
    \n{b_{v_n,c_n}(s)-c_n(s)}\leq \int_{-\infty}^s\big\|e^{A (s-\tau)}\big\|
    \big(\n{v_n'(\tau)-c_n'(\tau)}+\n{A}\,\n{v_n(\tau)-c_n(\tau)}\big) \,d\tau\,.
  \end{equation*}
  Thanks to the definition of $v_n$ and~\eqref{vx0r},
  \begin{equation}\label{derivativesvncn}
    \begin{split}
      &v'_n(\tau)=\begin{cases} F(\w_0{\cdot}(t_n+\tau),u(t_n+\tau,\w_0,x_0)) &\text{ if } \tau\in(-t_n,0]\,,\\
        0 & \text{ if } \tau\in(-\infty,-t_n)\,,
      \end{cases}\\[2pt]
      &c'_n(\tau)=F(\w_0{\cdot}(t_n+\tau),c(\w_0{\cdot}(t_n+\tau)) \qquad \text{
        for each }\tau\leq 0\,,
    \end{split}
  \end{equation}
  whence~\eqref{F2} provides the uniform boundedness of $v_n$, $v'_n$, $c_n$ and
  $c'_n$ for all $n\in\N$. Now, for each $s\le 0$,
  $\n{e^{A\,(s-\tau)}}\leq e^{-a(s-\tau)}$ for some positive $a>0$ and
  $\int_{-\infty}^s e^{-a(s-\tau)}\,d\tau=1/a$. As a result, we deduce the
  existence of $T_0>0$ such that
  \[
    \int_{-\infty}^{-T_0}\big\|e^{A (s-\tau)}\big\|\,
    \big(\n{v_n'(\tau)-c_n'(\tau)}+\n{A}\,\n{v_n(\tau)-c_n(\tau)}\big) \,d\tau
    <\eps
  \]
  for each $n\in\N$ and, hence,
  \[
    \n{b_{v_n,c_n}(s)-c_n(s)}\leq \eps+\int_{-T_0}^0
    \big(\n{v_n'(\tau)-c_n'(\tau)}+\n{A}\,\n{v_n(\tau)-c_n(\tau)}\big)
    \,d\tau\,.
  \]
  Since $\lim_{n\to\infty}\di(v_n,c_n)=0$, the second part of the integral
  vanishes as $n\uparrow \infty$.  In addition, if we fix $n_0\in\N$ with
  $t_{n_0}>T_0$, the inequality $-t_n\leq-t_{n_0}\leq -T_0\le 0$ holds for each
  $n\geq n_0$ and, thanks to~\eqref{derivativesvncn}, \eqref{limitesTn4},
  \ref{F3} and the relative compactness of the trajectory
  $\{\tau(t,\w_0,x_0):t\ge 0\}$,
  \begin{multline*}
    \int_{-T_0}^0 \n{v_n'(\tau)-c_n'(\tau)}\,d\tau=\int_{-T_0}^0
    \big\|F(\w_0{\cdot}(t_n+\tau), u(t_n+\tau,\w_0,x_0))\\-
    F(\w_0{\cdot}(t_n+\tau),c(\w_0{\cdot}(t_n+\tau))\big\|\,d\tau\,\,
  \end{multline*}
  also tends to $0$ as $n\uparrow \infty$. Thus,
  $\lim_{n\to\infty}\di(b_{v_n,c_n},c_n)=0$ and, consequently,
  $\lim_{n\to\infty}\di(b_{v_n,c_n},v_n)=0$. Next, we consider the function
  $g_n=b_{v_n,c_n}+\overline{h}_{t_n}$, which satisfies
  $\lim_{n\to\infty}\di(g_n,b_{v_n,c_n})=0$ thanks
  to~\eqref{limitesTn3}. Therefore,
  \begin{equation}\label{gn-vn-cn}
    \lim_{n\to\infty}\di(g_n,c_n)=0 \quad \text{and}\quad  \lim_{n\to\infty}\di(g_n,v_n)=0\,.
  \end{equation}
  From Lemma~\ref{h0}, we know that $h_0\geq _A x_0-x_0(0)$ and $h_0\geq_A 0$,
  whence
  \[
    u(t_n,\w_0,x_0)\leq_A v(t_n,\w_0,x_0)+
    \overline{h}_{t_n}=v_n+\overline{h}_{t_n}\,,
  \]
  i.e. the function
  $v:s \mapsto e^{-A\,s}\big(v_n(s)+ \overline{h}_{t_n}(s)
  -u(t_n,\w_0,x_0)(s)\big)$ is nondecreasing because we can write
  \[
    v(s)=\begin{cases} e^{-A\,s}\,\overline{h}_{t_n}(s) &
      \text{ if } s\in[-t_n,0]\,, \\[2pt]
      e^{A \,t_n}\, e^{-A(s+t_n)} \big[\,x_0(0)-x_0(s+t_n)+h_0(s+t_n) \big]&
      \text{ if } s\in(-\infty,-t_n] \,.\end{cases}
  \]
  Therefore, $u(t_n,\w_0,x_0)\leq_A g_n$; hence,~\eqref{limitesTn2}
  and~\eqref{gn-vn-cn} yield
  \begin{equation}\label{uTngnto0}
    \di(u(t_n,\w_0,x_0),g_n)\leq \di(u(t_n,\w_0,x_0),v_n)+\di(v_n,g_n)\underset{n\to\infty}\longrightarrow 0\,.
  \end{equation}
  Thanks to the uniform boundedness of $g_n$ and $c_n$, there exist a constant
  $c>0$ such that $u(t_n,\w_0,x_0)$, $c(\w_0{\cdot}t_n)$ and $g_n\in B_c$ for
  each $n\in\N$. Let $\delta>0$ be the modulus of uniform stability of $B_c$ for
  the order $\leq_A$ for $\eps/2$.  From~\eqref{uTngnto0} and~\eqref{gn-vn-cn},
  it follows that there exists $n_0\in\N$ such that
  \[\di(u(t_n,\w_0,x_0),g_n)<\delta
    \quad \text{and} \quad \di(c(\w_0{\cdot}t_n),g_n)<\delta\] for each
  $n\geq n_0$. Therefore, from $u(t_n,\w_0,x_0)\leq_ A g_n$,
  $c(\w_0{\cdot}t_n)=c_n\leq_A g_n$ and~\ref{F6}, we deduce that
  \begin{align*}
    &\di\big(u(t,\w_0{\cdot}t_n,u(t_n,\w_0,x_0)),u(t,\w_0{\cdot}t_n,g_n)\big)<\frac{\eps}{2}\,,\\
    &\di\big(u(t,\w_0{\cdot}t_n,c(\w_0{\cdot}t_n)), u(t,\w_0{\cdot}t_n,g_n)\big)<\frac{\eps}{2}\,,
  \end{align*}
  that is,
  \[
    \di\big(u(t+t_n,\w_0,x_0), c(\w_0{\cdot}(t+t_n)\big)<\eps\quad \text{for
      each } t\geq 0 \text{ and } n\geq n_0\,.
  \]
  Finally, taking $T=t_{n_0}$ yields the expected result.
\end{proof}
\section{Transformed exponential order for NFDEs with infinite
  delay}\label{neutral}
In this section we will extend the previous results to NFDEs with infinite delay
and nonautonomous stable operator.  This extension requires the definition
of a transformed exponential ordering.
\subsection{Nonautonomous stable operators}\label{operatorD}
Let $D\colon\Om\times BC\to\R^m$ be an operator satisfying the following
hypotheses:
\begin{enumerate}[label={\upshape(\textbf{D\arabic*})},series=D_properties,
  leftmargin=27pt]\setlength\itemsep{0.4em}
\item\label{neutral:D_linear} $D$ is linear and continuous in its second
  variable for the norm $\n{\cdot}_\infty$ and the map
  $\Om\to\mathcal{L}(BC,\R^m)$, $\w\mapsto D(\w,{\cdot})$ is continuous;
\item\label{neutral:D_cont_d} for each $r>0$, $D\colon\Om\times B_r\to \R^m$ is
  continuous when we take the restriction of the compact-open topology to $B_r$,
  i.e.~if $\w_n\to\w$ and $x_n\stackrel{\di\;}\to x$ as $n\uparrow\infty$ with
  $x\in B_r$, then $\lim_{n\to\infty}D(\w_n,x_n)=D(\w,x)$.
\end{enumerate}
Under these assumptions, by adapting the proof of Lemma~3.1 of~\cite{paper:obvi}
for each $\w\in\Om$, we deduce that
\[ D(\w,x)=\int_{-\infty}^0 [d\mu(\w)(s)]\,x(s)=B(\w)\,x(0)-\int_{-\infty}^0
  [d\nu(\w)(s)]\,x(s)\,,\] where $\mu(\w)=[\mu_{ij}(\w)]$, $\mu_{ij}(\w)$ is a
real regular Borel measure with finite total variation
$|\mu_{ij}(\w)|((-\infty,0])<\infty$, for $i$, $j\in\{1,\ldots,m\}$ and
$\w\in\Om$, $B(\w)=\mu(\w)(\{0\})$ and $\nu(\w)=B(\w)\,\delta_0-\mu(\w)$ where
$\delta_0$ is the Dirac measure at 0. As in Corollary 3.4 of~\cite{paper:obvi},
it follows that
\begin{equation}\label{eq:nu}
  \lim_{\rho\to 0^+}\n{\nu(\w)}_\infty([-\rho,0])=0 \text{ and } \lim_{\rho\to \infty}\n{\nu(\w)}_\infty((-\infty,-\rho])=0
\end{equation}
uniformly for $\w\in\Om$, where $\n{\nu(\w)}_\infty(E)$ denotes the matrix norm
associated to the maximum norm of the $m\times m$ matrix $[|\nu_{ij}(\w)|(E)]$
of total variations over the Borel subset $E\subset(-\infty,0]$, i.e.
$ \displaystyle \n{\nu(\w)}_\infty(E)=\max_{1\leq i\leq
  m}\sum_{j=1}^{m}|\nu_{ij}(\w)|(E)\,.$\par
In addition, we will assume a natural generalization of the atomic character at
$0$ of the operator $D$, as seen in Hale~\cite{book:hale}, Hale and
Verduyn-Lunel~\cite{book:hale2}, and~\cite{paper:obvi}.
\begin{enumerate}[resume*=D_properties]
\item\label{neutral:D_atomic} $B(\w)$ is the identity matrix for all $\w\in\Om$.
\end{enumerate}
Thus, under assumptions~\ref{neutral:D_linear}--\ref{neutral:D_atomic}, $D$
takes the form
\begin{equation*}
  D(\w,x)=x(0)-\int_{-\infty}^0 [d\nu(\w)(s)]\,x(s)\,,
\end{equation*}
where $\nu$ has a continuous variation with respect to $\omega$ and
satisfies~\eqref{eq:nu}.
\par
We omit the proof of the following result, which can be easily adapted to this
case from Theorem~2.5 of~Mu\~{n}oz-Villarragut~\cite{tesis:victor}.  As stated before, given a continuous function $x\in C(\R,\R^m)$,  $x_t(\cdot)$ denotes the continuous function $x_t\colon (-\infty,0]\to \R^m$  defined  by $x_t(s)=x(t+s)$
for $s\in (-\infty,0]$.
\begin{proposition}\label{existencia}
  Under assumptions~\ref{neutral:D_linear}--\ref{neutral:D_atomic}, for each
  $h\in C([0,\infty),\R^m)$ and $(\w,\varphi)\in \Om\times BC$ with
  $D(\w,\varphi)=h(0)$, the nonhomogeneous equation
  \begin{equation}\label{nohomogenea}
    \left\{
      \begin{array}{ll}
        D(\w{\cdot}t,x_t)=h(t)\,, & t\geq 0\,, \\
        x_0=\varphi\,,
      \end{array} \right.
  \end{equation}
  has a solution $x\in C(\R,\R^m)$.
\end{proposition}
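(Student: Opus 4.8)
The plan is to rewrite~\eqref{nohomogenea} as a Volterra integral equation and solve it by a step-by-step contraction argument that exploits the small mass of $\nu$ near $0$ coming from~\eqref{eq:nu}. Using the representation $D(\w,x)=x(0)-\int_{-\infty}^0[d\nu(\w)(s)]\,x(s)$ valid under~\ref{neutral:D_linear}--\ref{neutral:D_atomic}, the equation $D(\w{\cdot}t,x_t)=h(t)$ for $t\ge 0$ is equivalent to
\[
x(t)=h(t)+\int_{-\infty}^0[d\nu(\w{\cdot}t)(s)]\,x(t+s)\,,\qquad t\ge 0\,,
\]
together with $x(s)=\varphi(s)$ for $s\le 0$; the compatibility condition $D(\w,\varphi)=h(0)$ is exactly what makes the right-hand side at $t=0$ equal $\varphi(0)$, so the two pieces will match continuously at the origin.

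First I would use the uniform limit $\lim_{\rho\to0^+}\n{\nu(\w)}_\infty([-\rho,0])=0$ in~\eqref{eq:nu} to fix $\delta>0$ with $\n{\nu(\w)}_\infty([-\delta,0])\le 1/2$ for all $\w\in\Om$. On the Banach space $C_0=\{y\in C([0,\delta],\R^m)\mid y(0)=\varphi(0)\}$ with the supremum norm I would define, for $y\in C_0$, its extension $\bar y$ (equal to $\varphi$ on $(-\infty,0]$ and to $y$ on $[0,\delta]$) and the operator
\[
(\mathcal T y)(t)=h(t)+\int_{-\infty}^0[d\nu(\w{\cdot}t)(s)]\,\bar y(t+s)\,,\qquad t\in[0,\delta]\,.
\]
This is well defined since $\bar y$ is bounded and $\nu(\w{\cdot}t)$ has finite total variation; $\mathcal T y$ is continuous on $[0,\delta]$ because the integral equals $\bar y(t)-D(\w{\cdot}t,\bar y_t)$ and $t\mapsto D(\w{\cdot}t,\bar y_t)$ is continuous by~\ref{neutral:D_cont_d} (the $\bar y_t$ stay in a fixed ball $B_r$ and vary continuously for the compact-open topology); and $(\mathcal T y)(0)=h(0)+\varphi(0)-D(\w,\varphi)=\varphi(0)$, so $\mathcal T(C_0)\subset C_0$. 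Since $\bar y(t+s)-\bar z(t+s)$ vanishes for $t+s\le0$, the difference $(\mathcal T y)(t)-(\mathcal T z)(t)$ is an integral over $s\in(-t,0]\subseteq(-\delta,0]$, whence
\[
|(\mathcal T y)(t)-(\mathcal T z)(t)|\le\n{\nu(\w{\cdot}t)}_\infty((-\delta,0])\,\n{y-z}_\infty\le\tfrac12\,\n{y-z}_\infty\,,
\]
so $\mathcal T$ is a contraction and its fixed point gives a continuous solution $x$ on $(-\infty,\delta]$.

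Then I would iterate this on $[\delta,2\delta]$, $[2\delta,3\delta]$, and so on. At the $n$-th step one works on $\{y\in C([n\delta,(n+1)\delta],\R^m)\mid y(n\delta)=x(n\delta)\}$, extending $y$ by the already constructed $x$ on $(-\infty,n\delta]$; the identity $(\mathcal T y)(n\delta)=h(n\delta)+x(n\delta)-D(\w{\cdot}(n\delta),x_{n\delta})=x(n\delta)$ holds because $x$ already solves the equation up to $t=n\delta$, so the new piece glues continuously to the old one. The decisive point is that for $t\in[n\delta,(n+1)\delta]$ and $\tau\in[n\delta,t]$ one has $\tau-t\in[-\delta,0]$, so the contraction constant is again $\le1/2$ irrespective of $n$; hence the step length never shrinks, $[0,\infty)$ is exhausted after countably many steps, and concatenating $\varphi$ on $(-\infty,0]$ with these pieces yields the desired $x\in C(\R,\R^m)$.

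The main obstacle is organizing these uniformities: the contraction constant must be bounded by $1/2$ simultaneously for all $\w\in\Om$ and all steps $n$, which is precisely what the uniform limit in~\eqref{eq:nu} provides, and one must check the continuity in $t$ of the forcing terms $t\mapsto\int_{-\infty}^0[d\nu(\w{\cdot}t)(s)]\,\bar y(t+s)$, for which~\ref{neutral:D_cont_d} together with the decay of $\n{\nu(\w)}_\infty$ at $-\infty$ in~\eqref{eq:nu} is used.
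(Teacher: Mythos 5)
Your argument is correct. The paper does not actually write a proof of this proposition: it is stated with the remark that it ``can be easily adapted from Theorem~2.5 of~\cite{tesis:victor}'', so your step-by-step contraction scheme is a self-contained substitute, and it follows the classical route such proofs take for operators atomic at zero: rewrite $D(\w{\cdot}t,x_t)=h(t)$ as $x(t)=h(t)+\int_{-\infty}^0[d\nu(\w{\cdot}t)(s)]\,x(t+s)$ and solve on successive intervals of a fixed length $\delta$ chosen so that $\n{\nu(\w)}_\infty([-\delta,0])\le 1/2$ for every $\w$. The two delicate points are handled properly in your proposal: the uniformity in~\eqref{eq:nu} gives a single $\delta$ valid for all the measures $\nu(\w{\cdot}t)$ (here the invariance of $\Om$ under the flow is what makes ``uniformly for $\w\in\Om$'' suffice), and the continuity in $t$ of the forcing term is obtained cleanly from the identity $\int_{-\infty}^0[d\nu(\w{\cdot}t)(s)]\,\bar y(t+s)=\bar y(t)-D(\w{\cdot}t,\bar y_t)$ together with~\ref{neutral:D_cont_d}, since $\bar y_t$ stays in a fixed ball and moves continuously for the compact-open topology; note that with this identity the decay of $\n{\nu(\w)}_\infty$ at $-\infty$ invoked in your last paragraph is not even needed. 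The gluing computation $(\mathcal T y)(n\delta)=x(n\delta)$ and the fact that the contraction constant is independent of the step are exactly the points that make the construction exhaust $[0,\infty)$, so the concatenation with $\varphi$ yields the asserted $x\in C(\R,\R^m)$.
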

Next the definition of stability for $D$ is stated.
\begin{enumerate}[resume*=D_properties]
\item\label{neutral:D_stable} $D$ is \emph{stable}, that is, there is a
  continuous function $c\in C([0,\infty),\R)$ with $\lim_{\,t\to\infty}c(t)=0$
  such that, for each $(\w,\varphi)\in\Om\times BC$ with $D(\w,\varphi)=0$, the
  solution of the homogeneous problem
  \[\left\{
      \begin{array}{ll}
        D(\w{\cdot}t,x_t)=0\,, & t\geq 0\,, \\
        x_0=\varphi\,,
      \end{array} \right.
  \] satisfies $\n{x(t)}\leq c(t)\,\n{\varphi}_\infty$ for each $t\geq 0$.
\end{enumerate}
\par
The next result provides a condition to check the stability of $D$. Its
proof is similar to the one for $BU$ done in
Theorem~3.9(iii) of~\cite{paper:obvi}. We will denote
$\wh D_2(\w,x)\colon(-\infty,0]\to\R^m$, $s\mapsto D(\w{\cdot}s,x_s)$.
\par
\begin{proposition}\label{stabilityCondition} Under
  assumptions~\ref{neutral:D_linear}--\ref{neutral:D_atomic}, if for each $r>0$
  and each sequence $\{(\w_n,x^n)\}_n\subset\Om\times BC$ such that
  $\n{\widehat D_2 (\w_n,x^n)}_\infty\leq r$, $\w_n\to\w\in\Om$ and
  {\upshape$\widehat D_2(\w_n,x^n) \stackrel{\di}\to 0$} as $n\uparrow\infty$,
  it holds that $x^n(0)\to 0$ as $n\to\infty$, then $D$ is stable.
\end{proposition}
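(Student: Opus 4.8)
The plan is to deduce the stability of $D$ by contradiction from the pointwise hypothesis on $\wh D_2$, by means of a time-translation of homogeneous solutions. Throughout I use the representation $D(\w,x)=x(0)-\int_{-\infty}^0[d\nu(\w)(s)]\,x(s)$ recorded above, together with~\eqref{eq:nu}, and the fact that, by~\ref{neutral:D_linear}--\ref{neutral:D_atomic} and the compactness of $\Om$, the constants $C_0:=\sup_{\w\in\Om}\n{D(\w,\cdot)}_{\mathcal{L}(BC,\R^m)}$ and $L:=\sup_{\w\in\Om}\n{\nu(\w)}_\infty((-\infty,0])$ are finite. Given $(\w,\varphi)\in\Om\times BC$ with $D(\w,\varphi)=0$ and a solution $x$ of the homogeneous problem $D(\w{\cdot}t,x_t)=0$, $x_0=\varphi$ (continuous on $\R$ and agreeing with $\varphi$ on $(-\infty,0]$), note that $x(t)=\int_{-\infty}^0[d\nu(\w{\cdot}t)(s)]\,x(t+s)$ for $t\ge 0$, and set
\[
  \beta(t):=\sup\{\n{x(t)}\mid D(\w,\varphi)=0,\ \n{\varphi}_\infty\le 1,\ x\text{ a solution of the homogeneous problem}\}\,,\qquad t\ge 0\,.
\]

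\emph{Step 1: $\beta$ is finite and locally bounded.} By~\eqref{eq:nu} choose $\rho_0>0$ with $\sup_{\w\in\Om}\n{\nu(\w)}_\infty([-\rho_0,0])\le 1/2$. Splitting the integral for $x(t)$ at $-\rho_0$ yields $\n{x(t)}\le \tfrac12\sup_{[t-\rho_0,\,t]}\n{x}+L\sup_{(-\infty,\,t-\rho_0]}\n{x}$ for $t\ge 0$. For a fixed solution, $M_n:=\sup_{(-\infty,\,n\rho_0]}\n{x}$ is finite (continuity of $x$, $\n{\varphi}_\infty<\infty$) and nondecreasing; applying the last bound on $[n\rho_0,(n+1)\rho_0]$ gives $M_{n+1}\le\max\big(M_n,\tfrac12 M_{n+1}+LM_n\big)$, whence $M_{n+1}\le\max(1,2L)\,M_n$ by the obvious dichotomy, and therefore $M_n\le\max(1,2L)^n\,\n{\varphi}_\infty$. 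As this bound does not depend on the solution, $\beta(t)\le\max(1,2L)^{\lceil t/\rho_0\rceil}<\infty$ for every $t\ge 0$, and $\beta$ is bounded on every compact interval.

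\emph{Step 2: $\beta(t)\to 0$ as $t\to\infty$.} Suppose not; then there are $\eps_0>0$, $t_n\uparrow\infty$, data $(\w_n,\varphi^n)$ with $D(\w_n,\varphi^n)=0$ and $\n{\varphi^n}_\infty\le 1$, and solutions $x^n$ of the corresponding homogeneous problems with $\n{x^n(t_n)}\ge\eps_0$. Put $\eta_n:=\w_n{\cdot}t_n\in\Om$ and $y^n:=x^n_{t_n}$, which lies in $BC$ since $x^n$ is continuous and equals $\varphi^n$ on $(-\infty,0]$. For $s\le 0$,
\[
  \wh D_2(\eta_n,y^n)(s)=D\big(\w_n{\cdot}(t_n+s),\,x^n_{t_n+s}\big)=
  \begin{cases} 0, & s\ge -t_n\,,\\[2pt] D\big(\w_n{\cdot}(t_n+s),\,\varphi^n_{t_n+s}\big), & s<-t_n\,,\end{cases}
\]
using the homogeneous equation on $[0,\infty)$ in the first case and $x^n=\varphi^n$ on $(-\infty,0]$ in the second. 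Since $\n{\varphi^n_{t_n+s}}_\infty\le\n{\varphi^n}_\infty\le 1$, the second branch is bounded in norm by $C_0$, so $\n{\wh D_2(\eta_n,y^n)}_\infty\le C_0=:r$ for all $n$; moreover, for each $N>0$ one has $\wh D_2(\eta_n,y^n)\equiv 0$ on $[-N,0]$ as soon as $t_n>N$, so $\wh D_2(\eta_n,y^n)\stackrel{\di\;}\to 0$ as $n\uparrow\infty$. Passing to a subsequence with $\eta_n\to\eta\in\Om$, the hypothesis of the proposition applies and gives $y^n(0)\to 0$; but $y^n(0)=x^n(t_n)$ with $\n{x^n(t_n)}\ge\eps_0>0$, a contradiction.

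\emph{Step 3: conclusion.} Since $\beta$ is bounded on compacts and $\beta(t)\to 0$, there is a continuous $c\colon[0,\infty)\to[0,\infty)$ with $c\ge\beta$ and $\lim_{t\to\infty}c(t)=0$ (for instance, a continuous nonincreasing majorant of $t\mapsto\sup_{s\ge t}\beta(s)$). Because the homogeneous equation is linear in $\varphi$, for arbitrary $(\w,\varphi)$ with $D(\w,\varphi)=0$ and any solution $x$ one gets $\n{x(t)}\le\beta(t)\,\n{\varphi}_\infty\le c(t)\,\n{\varphi}_\infty$, which is exactly the stability of $D$. I expect Step~1 to be the main obstacle: it is precisely where the structural hypotheses enter, through the uniform smallness of $\nu$ near $0$ in~\eqref{eq:nu}, without which the family of homogeneous solutions need not be equibounded and $\beta$ could fail to be finite; by contrast, the translation argument of Step~2 is short once this a priori bound is in hand.
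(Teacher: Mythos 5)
Your argument is correct and is essentially the proof the paper leaves implicit by deferring to Theorem~3.9(iii) of~\cite{paper:obvi}: a uniform a priori bound on solutions of the homogeneous problems obtained from the uniform smallness of $\nu$ near $0$ in~\eqref{eq:nu}, then a time-translation/compactness argument feeding the sequence $(\w_n{\cdot}t_n,x^n_{t_n})$ into the stated hypothesis, and finally a continuous majorant of the normalized supremum $\beta$ together with linearity. Two cosmetic points only: the finiteness of $L=\sup_{\w\in\Om}\n{\nu(\w)}_\infty((-\infty,0])$ deserves a one-line justification (it is dominated by $1+\sup_{\w\in\Om}\n{D(\w,{\cdot})}$, which is finite by~\ref{neutral:D_linear} and the compactness of $\Om$), and in Step~3 the case $\varphi=0$, where normalization is impossible, is already settled by the Step~1 estimate $M_n\leq\max(1,2L)^n\,\n{\varphi}_\infty$, which forces $x\equiv 0$ there.
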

Although the definition of stability is given for the homogeneous equation, it
is easy to deduce quantitative estimates for the solution of a non-homogeneous
equation in terms of the initial data. The proof of the next proposition  is analogous to the one for $BU$ done in Theorem~2.11 and
Proposition~3.2 of~\cite{tesis:victor}.
\begin{proposition}\label{prop:bounds}
  Under assumptions~\ref{neutral:D_linear}--\ref{neutral:D_stable}, there are a
  positive constant $k>0$ and a continuous function $c\in C([0,\infty),\R)$ with
  $\lim_{\,t\to\infty}c(t)=0$ such that
  \begin{itemize}
  \item[\rm{(i)}]
    $\n{x^h(s)}\leq c(t)\,\n{x^h}_\infty+ k\,{\displaystyle\sup_{s-t\leq \tilde
        s\leq s}\n{h(\tilde s)}}\,$ for all $s\leq 0\leq t$, and hence
  \item[\rm{(ii)}] $\n{x^h}_\infty\leq k\,\n{h}_\infty$\,,
  \end{itemize}
for each $h\in BC,\,\w\in\Om$ and
  $x^h\in BC$ satisfying $D(\w{\cdot}s,x_s^h)=h(s)$ for $s\leq 0$.
\end{proposition}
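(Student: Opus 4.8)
The plan is to reduce the non-homogeneous estimate to the homogeneous stability assumption \ref{neutral:D_stable} by a standard decomposition of the solution into a piece carrying the forcing $h$ and a piece governed by the homogeneous equation. Given $h\in BC$, $\w\in\Om$ and $x^h\in BC$ with $D(\w{\cdot}s,x_s^h)=h(s)$ for all $s\leq 0$, the first step is to compare $x^h$ with a convenient reference function. Because $B(\w)$ is the identity by \ref{neutral:D_atomic}, we can write, for $s\leq 0$,
\[
  x^h(s)=h(s)+\int_{-\infty}^0 [d\nu(\w{\cdot}s)(\theta)]\,x^h(s+\theta)\,,
\]
so $x^h(s)-h(s)$ is expressed through an integral of $x^h$ against the measure $\nu$ whose total variation near $0$ is uniformly small by \eqref{eq:nu}. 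Fixing $\rho>0$ with $\n{\nu(\w)}_\infty([-\rho,0])\le 1/2$ uniformly in $\w$ (possible by the first limit in \eqref{eq:nu}), a direct splitting of the integral at $-\rho$ gives a bound of the form $\n{x^h}_{L^\infty(-\infty,s]}$ controlled by $\n{h}_\infty$ and the tail contribution; iterating or using the second limit in \eqref{eq:nu} for the tail yields the global bound (ii), $\n{x^h}_\infty\le k\,\n{h}_\infty$ for a constant $k$ depending only on $D$. This is exactly the argument behind Theorem~2.11 of~\cite{tesis:victor}, adapted to $BC$ in place of $BU$.

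For (i), the idea is to translate in time and use stability. Fix $s\leq 0\leq t$. Define $y(\theta)=x^h(s+\theta)$ for $\theta\leq 0$; then $D(\w{\cdot}(s+\theta),y_\theta)=h(s+\theta)$, i.e. $y$ solves the same type of equation over the time window ending at $s$. Now subtract off the forcing: let $\wt h$ be any continuous function on $\R$ agreeing with $\theta\mapsto h(s+\theta)$ on the relevant interval and extended so that the map $\theta\mapsto D(\w{\cdot}(s+\theta),\cdot)$ applied to a particular known function reproduces it; concretely, by Proposition~\ref{existencia} there is a function $w$ with $D(\w{\cdot}(s-t+\theta),w_\theta)=h(s-t+\theta)$ and prescribed initial segment, built from $h$ alone, with $\n{w(\sigma)}\le k'\sup_{\sigma-?\le\tilde\sigma\le\sigma}\n{h(\tilde\sigma)}$ by part (ii) applied on shifted windows. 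Then $x^h-w$ satisfies a \emph{homogeneous} equation $D(\w{\cdot}\sigma,(x^h-w)_\sigma)=0$ started at time $s-t$, so \ref{neutral:D_stable} gives $\n{x^h(s)-w(s)}\le c(t)\,\n{(x^h-w)_{s-t}}_\infty\le c(t)(\n{x^h}_\infty+\n{w}_\infty)$. Combining with the bound on $\n{w(s)}$ and absorbing $\n{w}_\infty\le k\n{h}_\infty$ into the supremum term (after possibly enlarging $k$ and replacing $c$ by a slightly larger function still tending to $0$) produces the stated inequality
\[
  \n{x^h(s)}\le c(t)\,\n{x^h}_\infty+k\sup_{s-t\le\tilde s\le s}\n{h(\tilde s)}\,.
\]
Taking $t\to\infty$ and using $c(t)\to 0$, or simply $t$ large enough that $c(t)<1$, then gives (ii) with the constant $k$ from (i), closing the logical loop; alternatively one proves (ii) independently first as sketched above and uses it inside (i).

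The main obstacle I expect is the bookkeeping in the decomposition step for (i): one must choose the auxiliary function $w$ (the ``particular solution'' carrying $h$) so that it depends only on $h$ restricted to the window $[s-t,s]$ — not on the full past of $x^h$ — since otherwise the term $k\sup_{s-t\le\tilde s\le s}\n{h(\tilde s)}$ would have to be replaced by $k\n{h}_\infty$ and the sharper localized form of (i) would be lost. Making this precise requires invoking Proposition~\ref{existencia} with initial time $s-t$ and an initial segment tailored so that the resulting $D(\w{\cdot}(s-t),w_0)$ matches $x^h$ at that time, then controlling $w$ on $[s-t,s]$ by the non-homogeneous bound (ii) applied on sub-windows, exactly as in the $BU$ case of~\cite{tesis:victor}; the only genuinely new point is checking that every estimate there used the uniform decay \eqref{eq:nu} and the representation with $B(\w)=I$ rather than any special feature of $BU$, which it does. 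All other steps — the translation invariance of the cocycle structure of $D$, the linearity in the second variable, and the continuity of $\w\mapsto D(\w,\cdot)$ guaranteeing uniform constants over the compact base $\Om$ — are routine.
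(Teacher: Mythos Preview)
The paper does not actually prove this proposition; it simply states that the argument is analogous to Theorem~2.11 and Proposition~3.2 of~\cite{tesis:victor}, so there is no detailed proof to compare against. Your overall plan---split $x^h$ into a homogeneous piece controlled by~\ref{neutral:D_stable} and a particular piece controlled by $h$---is indeed the scheme behind that reference.

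There is, however, a genuine gap in your direct route to~(ii). The integral-splitting argument you sketch (pick $\rho$ with $\n{\nu(\w)}_\infty([-\rho,0])\le 1/2$, control the near part by $\tfrac12\n{x^h}_\infty$ and the tail by~\eqref{eq:nu}) only closes if \emph{both} $\n{\nu(\w)}_\infty([-\rho,0])$ and $\n{\nu(\w)}_\infty((-\infty,-\rho])$ are simultaneously $<1/2$, i.e.\ if the total variation of $\nu(\w)$ is strictly less than~$1$. This is not assumed and is not implied by stability: in the scalar autonomous case $d\nu(s)=-a\,e^{bs}\,ds$ with $a,b>0$ one computes that the single characteristic root is $\lambda=-a-b<0$, so $D$ is stable, yet the total variation $a/b$ can be made arbitrarily large. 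For such $D$ your inequality never contracts. Since your argument for~(i) explicitly invokes~(ii) (``by part~(ii) applied on shifted windows'') and your fallback derivation of~(ii) is to take $t\to\infty$ in~(i), the two parts of your sketch are circular once the direct proof of~(ii) fails.

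The repair is to insert a preliminary \emph{finite-time} bound that does not use~(ii): from the representation and the first limit in~\eqref{eq:nu}, a step-by-step contraction on intervals of length $\rho$ (with $\n{\nu(\w)}_\infty([-\rho,0])\le 1/2$) shows that any solution of $D(\w{\cdot}\sigma,w_\sigma)=h(\sigma)$ on $[0,T]$ with initial segment $\varphi$ satisfies $\sup_{[0,T]}\n{w}\le C_T(\n{\varphi}_\infty+\sup_{[0,T]}\n{h})$ for a constant $C_T$ depending only on $T$ and $D$. Choosing $T$ with $c(T)<1/2$, building the particular solution $w$ on $[s-T,s]$ from an initial segment $\varphi_0$ with $D(\w{\cdot}(s-T),\varphi_0)=h(s-T)$ and $\n{\varphi_0}_\infty\le 2\n{h}_\infty$ (available since $B(\w)=I$ and $\nu$ has small mass near~$0$), and applying~\ref{neutral:D_stable} to $x^h-w$ gives $\n{x^h(s)}\le \tfrac12\n{x^h}_\infty + C\n{h}_\infty$ uniformly in $s$, hence~(ii). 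With~(ii) in hand, your decomposition then yields~(i) as you indicate.
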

The following statement associates the stability of $D$ to the invertibility of its convolution operator
$\widehat{D}$ and will allow us to transform the family of NFDEs with infinite delay~\eqref{neutral:infdelay} and nonoautonomous stable operator $D$ into a family of FDEs with infinite delay. We refer to Staffans~\cite{staf83} for the case of autonomous stable operators $D$ in an appropriate fading memory space, to~Haddock {\it et al.}~\cite{HKW1990} for an application of these ideas, and to~\cite{paper:MNO} (resp.~\cite{paper:obvi}) for the cases of autonomous (resp. nonautonomous) stable operators $D$ in $BU$.
\begin{theorem}\label{neutral:Dhat_properties}
  Under assumptions~\ref{neutral:D_linear}--\ref{neutral:D_stable}, we define
  the map
  \[
    \begin{array}{lcclrcl}
      \wh{D}\colon &\Om\times BC&\longrightarrow & \Om\times BC &&\\
                   & (\w,x) & \mapsto &(\w,\wh{D}_2(\w,x))
    \end{array}
  \]
  where $\wh D_2(\w,x)\colon(-\infty,0]\to\R^m$, $s\mapsto
  D(\w{\cdot}s,x_s)$. Then
  \begin{itemize}
  \item[\rm{(i)}] $\wh D$ is well defined and invertible;
  \item[\rm{(ii)}] $\wh D_2$ and $(\wh{D}^{-1})_2$ are linear and continuous for
    the norm in their second variable for all $\w\in\Om$; and
  \item[\rm{(iii)}] for all $r>0$, $\wh D$ and $\wh{D}^{-1}$ are uniformly
    continuous on $\Om\times B_r$ when we take the restriction of the
    compact-open topology to $B_r$.
  \end{itemize}
\end{theorem}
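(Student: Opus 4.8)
The plan is to prove the three items in order, with essentially all the work in the invertibility in~(i); once that is available, (ii) and~(iii) follow by routine estimates based on Proposition~\ref{prop:bounds} and~\eqref{eq:nu}. I would first dispatch the easy observations. Put $C_D:=\sup_{\w\in\Om}\n{D(\w,\cdot)}_{\mathcal L(BC,\R^m)}$, which is finite by~\ref{neutral:D_linear} and compactness of $\Om$. Then $\n{\wh D_2(\w,x)(s)}\le C_D\n{x}_\infty$, so $\wh D_2(\w,x)$ is bounded, and it is continuous in $s$ by~\ref{neutral:D_cont_d}, since $x_s\stackrel{\di}\to x_{s_0}$ as $s\to s_0$ with $x_s,x_{s_0}\in B_{\n{x}_\infty}$; hence $\wh D$ is well defined, and $\wh D_2(\w,\cdot)$ is linear because $D(\w,\cdot)$ and the translations are. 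This already yields the linearity and continuity of $\wh D_2$ in~(ii).

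For injectivity of each $\wh D_2(\w,\cdot)$ I would first note that forward solutions are unique (any homogeneous solution with $z_0=0$ satisfies $\n{z(t)}\le c(t)\n{z_0}_\infty=0$ by~\ref{neutral:D_stable}), and then, given $y\in BC$ with $D(\w{\cdot}s,y_s)=0$ for all $s\le0$, observe that for each $s_1\le0$ and $n\ge-s_1$ the map $t\mapsto y(-n+t)$, $t\in[0,n]$, is the forward solution issuing from $y_{-n}$, which satisfies $D(\w{\cdot}(-n),y_{-n})=0$; stability gives $\n{y(s_1)}\le c(n+s_1)\n{y}_\infty\to0$, so $y\equiv0$. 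For surjectivity — the crux — I would fix $(\w,h)$ and build finite-horizon solutions: using the first limit in~\eqref{eq:nu} and a contraction argument, for every $\w'$ and $v\in\R^m$ there is $\varphi\in BC$, supported near $0$, with $D(\w',\varphi)=v$ and $\n{\varphi}_\infty\le2\n{v}$; choosing such $\varphi^{(n)}$ with $D(\w{\cdot}(-n),\varphi^{(n)})=h(-n)$, Proposition~\ref{existencia} provides $x^{(n)}\in C(\R,\R^m)$ solving $D((\w{\cdot}(-n)){\cdot}t,x_t)=h(t-n)$ on $t\ge0$ with $x_0=\varphi^{(n)}$, and the shift $y^{(n)}(s):=x^{(n)}(s+n)$ satisfies $D(\w{\cdot}s,y^{(n)}_s)=\wh h^{(n)}(s)$ for all $s\le0$, where $\wh h^{(n)}(s)=h(s)$ for $s\ge-n$ and $\n{\wh h^{(n)}}_\infty\le 2C_D\n{h}_\infty$. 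By Proposition~\ref{prop:bounds}(ii), $\n{y^{(n)}}_\infty\le R$ for all $n$, with $R$ independent of $n$.

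The decisive step is that $\{y^{(n)}\}$ converges in the compact-open topology. For $m>n>N$, the difference $e:=y^{(m)}-y^{(n)}\in BC$ solves $D(\w{\cdot}s,e_s)=\wh h^{(m)}(s)-\wh h^{(n)}(s)$, and this forcing vanishes on $[-n,0]$ because both $\wh h^{(j)}$ coincide with $h$ there. Applying Proposition~\ref{prop:bounds}(i) with $t=n-N$ — chosen precisely so that the window $[s-t,s]$ stays inside $[-n,0]$ for all $s\in[-N,0]$ — the forcing term disappears and one is left with $\n{e(s)}\le c(n-N)\n{e}_\infty\le 2R\,c(n-N)$ for $s\in[-N,0]$, which tends to $0$ uniformly on $[-N,0]$ as $n\to\infty$. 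Hence $y^{(n)}\to x$ uniformly on compact subsets of $(-\infty,0]$ for some $x\in BC$, and since $y^{(n)}_s\stackrel{\di}\to x_s$ inside a fixed ball, \ref{neutral:D_cont_d} lets us pass to the limit: $D(\w{\cdot}s,x_s)=h(s)$ for every $s\le0$. Together with injectivity this proves~(i); the estimate $\n{(\wh D^{-1})_2(\w,h)}_\infty\le k\n{h}_\infty$ of Proposition~\ref{prop:bounds}(ii) and the linearity of the inverse of a linear bijection complete~(ii).

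For~(iii) I would use the representation $D(\w,x)=x(0)-\int_{-\infty}^0[d\nu(\w)(s)]\,x(s)$ together with the fact that $\di$ only involves finitely many seminorms up to any prescribed accuracy, so it suffices to estimate $\sup_{s\in[-n,0]}\n{D(\w{\cdot}s,x_s)-D(\w'{\cdot}s,x'_s)}$ for a fixed $n$ and $x,x'\in B_r$: split the integral at a large $-\rho$ (the tail is uniformly controlled by~\eqref{eq:nu}), bound the rest by $\n{x-x'}_{n+\rho}$ and by $\n{\nu(\w{\cdot}s)-\nu(\w'{\cdot}s)}_\infty((-\infty,0])$, and note that this last quantity is small uniformly for $s\in[-n,0]$ because $\w\mapsto D(\w,\cdot)$ (equivalently $\w\mapsto\nu(\w)$ in total variation) is uniformly continuous on the compact $\Om$ and $(\w,s)\mapsto\w{\cdot}s$ is uniformly continuous on $\Om\times[-n,0]$; this gives uniform continuity of $\wh D$ on $\Om\times B_r$. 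For $\wh D^{-1}$ I would set $x=(\wh D^{-1})_2(\w,h)$, $x'=(\wh D^{-1})_2(\w',h')$ with $h,h'\in B_r$ (so $x,x'\in B_{kr}$), notice that $e:=x-x'$ solves $D(\w{\cdot}s,e_s)=g(s)$ with $g=(h-h')+(\wh D_2(\w',x')-\wh D_2(\w,x'))$, and again invoke Proposition~\ref{prop:bounds}(i): pick $t$ with $2kr\,c(t)<\eps/2$, then make $\sup_{[-n-t,0]}\n{g}$ small using $\di(h,h')$ small and the uniform continuity of $\wh D$ on $\Om\times B_{kr}$ just established. The main obstacle is precisely the surjectivity in~(i): $I-\wh D_2(\w,\cdot)$ is neither a contraction nor a Fredholm operator, so one must go through the finite-horizon solutions, and the crucial point is that the bound in Proposition~\ref{prop:bounds}(i) decouples the influence of the bounded far past — damped by $c(t)\to0$ — from that of the forcing on a bounded recent window, which converts the a priori bounds into the Cauchy estimate above.
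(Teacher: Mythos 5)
Your proposal is correct, but the key step --- surjectivity of $\wh D$ in (i) --- is established by a genuinely different route than the paper's. The paper follows Theorem~3.9 of~\cite{paper:obvi}: it approximates $h$ by compactly supported functions $h_n\stackrel{\di}\to h$, takes compactly supported preimages $x^n$ with $\wh D_2(\w,x^n)=h_n$ (imported from the earlier construction), proves a uniform-in-$n$ equicontinuity estimate via Proposition~\ref{prop:bounds}(i), and then extracts a limit by Ascoli--Arzel\`{a} before passing to the limit with~\ref{neutral:D_cont_d}. You instead build exact finite-horizon solutions: a compatible initial patch $\varphi^{(n)}$ supported near $0$ (obtained from the atomicity expressed in the first limit of~\eqref{eq:nu} via a Neumann-series argument), Proposition~\ref{existencia} to solve on $[-n,0]$ after shifting, and then a direct Cauchy estimate from Proposition~\ref{prop:bounds}(i), where the forcing difference vanishes on $[-n,0]$ and the far past is damped by $c(n-N)\to 0$. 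This avoids any compactness extraction and yields a quantitative rate, and it does not need the existence of compactly supported preimages; the price is the extra construction of compatible initial data and reliance on Proposition~\ref{existencia}. Your treatment of (ii) coincides with the paper's (linearity plus Proposition~\ref{prop:bounds}(ii)), and for (iii), which the paper omits by reference to Theorems~3.6 and~3.9 of~\cite{paper:obvi}, your sketch via the representation $D(\w,x)=x(0)-\int_{-\infty}^0[d\nu(\w)(s)]\,x(s)$, the uniform limits~\eqref{eq:nu}, and Proposition~\ref{prop:bounds}(i) is in the same spirit as the cited proofs and is sound. Two small points to tidy: in your injectivity argument the segment $t\mapsto y(-n+t)$ is a solution only on $[0,n]$, so you should either extend it to $[0,\infty)$ (apply Proposition~\ref{existencia} at time $n$) before invoking~\ref{neutral:D_stable}, or simply quote Proposition~\ref{prop:bounds}(ii) with $h=0$ as the paper does; and the bound for $\wh h^{(n)}$ should read $\max\{\n{h}_\infty,2C_D\n{h}_\infty\}$, which changes nothing downstream.
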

\begin{proof} (i) We check that $\wh D_2(\w,x)\in BC$. The continuity follows
  from~\ref{neutral:D_cont_d}, and the boundedness from~\ref{neutral:D_linear}
  because
  \[\n{\wh D_2(\w,x)}_\infty=\sup_{s\leq 0} \n{D(\w{\cdot}s,x_s)}\leq \sup_{\wt
      \w\in\Om}\n{D(\wt \w,\cdot)}\,\n{x}_\infty<\infty\,.\] $\widehat{D}$ is
  injective because, if we have $(\w,x)$, $(\wh \w,\wh x)\in\Om\times BU$ with
  $\widehat D(\w,x)=\widehat D(\wh \w,\wh x)$, then $\w=\wh \w$ and, from
  Proposition~\ref{prop:bounds}(ii) and the fact that
  $D(\w{\cdot}s,x_s-\wh x_s)=0$ for $s\leq 0$, we get $x=\wh x$.\par
  In order to show that $\wh D$ is surjective, let $(\w,h)\in\Om\times BC$. As
  in Theorem~3.9 of~\cite{paper:obvi}, we take a sequence of continuous
  functions with compact support $\{h_n\}_n\subset B_r$ for some $r>0$ such that
  $h_n\stackrel{\di\;}\to h$ as $n\uparrow\infty$, and a sequence $\{x^n\}_n$ of
  continuous functions with compact support such that $\wh D_2(\w,x^n)=h_n$,
  i.e. $D(\w{\cdot}s,x_s^n)=h_n(s)$ for each $s\leq 0$. The next step of the
  proof differs from the one in Theorem~3.9 of~\cite{paper:obvi} because now $h$
  belongs to $BC$ instead of~$BU$.  We will check that given $\rho>0 $ and
  $\eps >0$ there is a $\delta>0$ such that
  \begin{equation}\label{equicon}
    \n{x^n-x^n_\tau}_{[-\rho,0]} <\eps \quad \forall \,n\in\N \;\text{ and }\;  \tau\in[-\delta,0]\,.
  \end{equation}
  For each $\tau\leq 0$ we define
  \[g_n^\tau\colon (-\infty,0]\to \R^m\,,\; s\mapsto D(\w{\cdot}s,
    (x^n-x^n_\tau)_s)\,,\] that is, $D(\w{\cdot}s,(x^n-x^n_\tau)_s)=g_n^\tau(s)$
  for each $s\leq 0$, and from Proposition~\ref{prop:bounds}(i) we deduce that
  \[ \n{x^n(s)-x^n_\tau(s)}\leq c(t)\,\n{x^n-x_\tau^n}_\infty+ k\, \sup_{s-t\leq
      \wt s\leq s} \n{g_n^\tau(\tilde s)} \quad \text {for each } t\geq
    0,\;s\leq 0\,.\] Since $c(t)\to 0$ as $t\uparrow \infty$ and from
  Proposition~\ref{prop:bounds}(ii) we have $\n{x^n_\tau}_\infty\leq k\,r$ for
  each $n\in\N$ and $\tau\leq 0$, we can find a $T>0$ such that
  $c(T)\, \n{x^n-x_\tau^n}_\infty<\eps/2$ and if $s\in[-\rho,0]$ then
  \begin{equation}\label{desi:xn}
    \n{x^n-x_\tau^n}_{[-\rho,0]}\leq \frac{\eps}{2}+k\, \n{g_n^\tau}_{[-\tilde\rho,0]}\,.
  \end{equation}
  for $\tilde \rho=\rho+T$. From the equicontinuity of $\{h_n\}_n$
  and~\ref{neutral:D_cont_d} there is a $\delta>0$ such that for each
  $\tau\in[-\delta,0]$, $s\in[-\tilde \rho,0]$ and $n\in\N$
  \[\n{h_n-(h_n)_\tau}_{[-\tilde \rho,0]}<\frac{\eps}{4\,k}\;\; \text{ and }
    \;\;
    \n{D(\w{\cdot}(s+\tau),\cdot)-D(\w{\cdot}s,\cdot)}<\frac{\eps}{4\,k^2\,r}\,.\]
  Thus, from the definitions of $g_n^\tau$ and $h_n$ we deduce that
  \begin{align*}
    \n{g_n^\tau(s)}&\leq \n{h_n(s)-(h_n)_\tau(s)} +\n{D(\w{\cdot}(s+\tau),x^n_{s+\tau})-D(\w{\cdot}s,x^n_{s+\tau})}\\
                   & \leq \frac{\eps}{4\,k}+ \frac{\eps}{4\,k^2\,r}\n{x^n}_\infty \leq \frac{\eps}{2\,k}
  \end{align*}
  for each $s\in[-\tilde \rho,0]$, which together with~\eqref{desi:xn}
  yields~\eqref{equicon}, as stated. Thus, $\{x^n\}_n$ is equicontinuous and,
  consequently, relatively compact for the compact-open topology. Hence, there
  is a convergent subsequence of $\{x^n\}_n$ (let us assume it is
    the whole sequence), i.e. there is a continuous function $x$ such that
    $x^n\stackrel{\di}\to x$ as $n\to\infty$. Therefore, we have that
    $\n{x}_\infty\leq k\,r$, which implies that $x\in BC$. From this,
    $x^n_s \stackrel{\textsf{d}\;}\to x_s$ for each $s\leq 0$ and the expression
    of $D$ yields $D(\w{\cdot}s,x_s^n)=h_n(s)\to D(\w{\cdot}s,x_s)$, i.e.
    $D(\w{\cdot}s,x_s)=h(s)$ for $s\leq 0$ and $\widehat D_2(\w,x)=h$. Then
    $\widehat{D}$ is surjective, as claimed.
    \par
    (ii) The continuity of $\wh D_2$ for the norm in the second variable is a
    consequence of~\ref{neutral:D_linear}, and the corresponding property for
    $(\wh{D}^{-1})_2$ follows from~Proposition~\ref{prop:bounds}(ii).
    \par
    (iii) The proof of the uniform continuity of $\wh D$ (resp.~$\wh{D}^{-1}$)
    on $\Om\times B_r$ for the compact-open topology is omitted because it
    follows, adapted to this case, the same steps of Theorem~3.6
    of~\cite{paper:obvi} (resp. Theorem~3.9 of~\cite{paper:obvi}).
  \end{proof}
\subsection{Neutral functional differential equations}
Let us consider the family of NFDEs with infinite delay
\begin{equation}\label{neutral:infdelay}
  \frac{d}{dt}D(\w{\cdot}t,z_t)=G(\w{\cdot}t,z_t),\quad t\geq 0,\,\w\in\Om,
\end{equation}
defined by an operator $D\colon\Om\times BC\to\R^m$ and a function
$G\colon\Omega\times BC \to\R^m$.
\par
With the notation of the previous subsection and a diagonal matrix $A$ with
negative diagonal entries as is Section~\ref{secfun}, we define the
\emph{transformed exponential order} relation introduced in~\cite{paper:obvi} on
each fiber of the product $\Om\times BC$: if $(\w,x),\,(\w,y)\in\Om\times BC$,
then
\begin{equation}\label{transfexporder}
  (\w,x)\leq_{D,A}(\w,y) \;\Longleftrightarrow\; \wh
  D_2(\w,x)\leq_A\wh D_2(\w,y),
\end{equation}
based on the partial order relation $\leq_A$ on $BC$ given in~\eqref{orderA}.
\par
Let us assume the following hypotheses:
\begin{enumerate}[label=\upshape(\textbf{N\arabic*}),series=neutral_properties,
  leftmargin=27pt]\setlength\itemsep{0.4em}
\item\label{N1} $G:\Om\times BC\to\R^m$ is continuous on $\Om\times BC$  when the norm $\|\cdot\|_\infty$ is considered on $BC$  and its
  restriction to $\Om\times B_r$ is Lipschitz continuous in its second variable  for each $r>0$; \smallskip
\item \label{N3} for each $r>0$, the restriction of $G$ to $\Om\times B_r$ is
  continuous when the compact-open topology is considered on $B_r$; \smallskip
\item \label{N4} if $(\w,x),\,(\w,y)\in\Om\times BC$ and
  $(\w,x)\leq_{D,A}(\w,y)$, then $G(\w,y)-G(\w,x)\geq A(D(\w,y)-D(\w,x))$ for the usual componentwise partial order relation on $\R^m$.
\end{enumerate}
Notice that assumption~\ref{N1} implies
\begin{equation*}
  \text{$G(\Om\times B_r)$ is a bounded subset of $\R^m$ for each $r>0$.}
\end{equation*}
As in the case of conditions~\ref{F1} and~\ref{F3}, condition~\ref{N1} does not imply~\ref{N3}. The reason for writing them separately is that~\ref{N1} provides existence and uniqueness in $BU$, while~\ref{N3} is used in the case of $BC$.\par
Under assumptions~\ref{neutral:D_linear}--\ref{neutral:D_stable} and~\ref{N1},
as seen in Wang and Wu~\cite{paper:WW1985} and~\cite{paper:jwu1991}, for each
$\w\in\Om$, the local existence and uniqueness of the solutions of
equation~\eqref{neutral:infdelay}$_\w$ is guaranteed if we assume some
hypotheses on the phase space that, in particular, are satisfied by
$BU$. Moreover, given $(\w,x)\in\Om\times BU$, if $z({\cdot},\w,x)$ represents
the solution of equation~\eqref{neutral:infdelay}$_\w$ with initial datum $x$,
then $u(t,\w,x):(-\infty,0]\to\R^m$, $s\mapsto z(t+s,\w,x)$ is an element of
$BU$ for all $t\geq 0$ where $z({\cdot},\w,x)$ is defined.\par
As a result, a local skew-product semiflow can be defined on $\Om\times BU$:
\[
  \begin{array}{rccl}
    \tau:&\mathcal U\subset\R^+\times\Om\times BU&\longrightarrow&\Om\times BU\\
         &(t,\w,x)&\mapsto&(\w{\cdot}t,u(t,\w,x)).
  \end{array}
\]
Next, let $(\w,y)\in\Om\times BU$. For each $t\geq 0$ such that
$u(t,\wh D^{-1}(\w,y))$ is defined, we define
$\wh u(t,\w,y)=\wh D_2(\w{\cdot}t,u(t,\wh D^{-1}(\w,y)))$.  As seen
in~\cite{paper:obvi}, it can be checked that
\[
  \wh z(t,\w,y)=\begin{cases}
    \,y(t)&\text{if }t\leq 0,\\
    \,\wh u(t,\w,y)(0)&\text{if }t\geq 0,
  \end{cases}
\]
is the solution of
\begin{equation}\label{neutral:transformed_family}
  \wh z\,'(t)=F(\w{\cdot}t,\wh z_t),\quad t\geq 0,\,\w\in\Om
\end{equation}
with initial datum $y$, where $F=G\circ\wh D^{-1}$.
\par\smallskip
A similar proof to that of Proposition~4.1 of~\cite{paper:obvi} provides the
following result.
\begin{proposition}\label{neutral:transformed_conditions1234}
  Under assumptions~\ref{neutral:D_linear}--\ref{neutral:D_stable} and
  \ref{N1}--\ref{N4}, the map $F=G\circ\wh D^{-1}$ satisfies conditions
  \ref{F1}--\ref{F4}.
\end{proposition}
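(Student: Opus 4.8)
The plan is to verify each of the four conditions \ref{F1}--\ref{F4} for $F=G\circ\wh D^{-1}$, exploiting the structural properties of $\wh D^{-1}$ collected in Theorem~\ref{neutral:Dhat_properties} together with the hypotheses \ref{N1}--\ref{N4} on $G$ and \ref{neutral:D_linear}--\ref{neutral:D_stable} on $D$. First I would check \ref{F1}: since $F(\w,y)=G(\w,(\wh D^{-1})_2(\w,y))$, continuity of $F$ on $\Om\times BC$ for $\n{\cdot}_\infty$ follows from the continuity of $G$ in \ref{N1} composed with the continuity of $\wh D^{-1}$ for the sup-norm, which is a consequence of Theorem~\ref{neutral:Dhat_properties}(ii) and \ref{neutral:D_linear} (the map $\w\mapsto D(\w,{\cdot})$ being continuous into $\mathcal L(BC,\R^m)$). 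The Lipschitz continuity of $F$ on $\Om\times B_r$ in its second variable requires the bound $\n{(\wh D^{-1})_2(\w,y)}_\infty\leq k\,\n{y}_\infty$ from Proposition~\ref{prop:bounds}(ii): if $y_1,y_2\in B_r$, then $(\wh D^{-1})_2(\w,y_i)$ lie in $B_{kr}$, and the Lipschitz constant of $G$ on $\Om\times B_{kr}$ combined with the linearity and $k$-boundedness of $(\wh D^{-1})_2$ gives the required estimate.

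For \ref{F3} I would use the uniform continuity of $\wh D^{-1}$ on $\Om\times B_r$ for the compact-open topology, which is exactly Theorem~\ref{neutral:Dhat_properties}(iii): if $\w_n\to\w$ and $y_n\overset{\di}\to y$ with $y\in B_r$, then $(\wh D^{-1})_2(\w_n,y_n)\overset{\di}\to(\wh D^{-1})_2(\w,y)$, and since the latter lies in $B_{kr}$, condition \ref{N3} on $G$ yields $F(\w_n,y_n)\to F(\w,y)$. Condition \ref{F4} is the quasimonotone condition, and here the transformed order is the natural bridge: if $x\leq_A y$ in $BC$, set $\xi=(\wh D^{-1})_2(\w,x)$ and $\eta=(\wh D^{-1})_2(\w,y)$, so that $\wh D_2(\w,\xi)=x\leq_A y=\wh D_2(\w,\eta)$, i.e.\ $(\w,\xi)\leq_{D,A}(\w,\eta)$ by \eqref{transfexporder}. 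Then \ref{N4} gives $G(\w,\eta)-G(\w,\xi)\geq A(D(\w,\eta)-D(\w,\xi))$. Now $F(\w,y)-F(\w,x)=G(\w,\eta)-G(\w,\xi)$, while $D(\w,\eta)=\wh D_2(\w,\eta)(0)=y(0)$ and $D(\w,\xi)=\wh D_2(\w,\xi)(0)=x(0)$, using the definition $\wh D_2(\w,x)(s)=D(\w{\cdot}s,x_s)$ evaluated at $s=0$. Hence $F(\w,y)-F(\w,x)\geq A\,(y(0)-x(0))$, which is \ref{F4}.

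The main obstacle is less in any single verification than in carefully tracking the bookkeeping of the bounds and the evaluation-at-zero identities: one must consistently use $\n{(\wh D^{-1})_2(\w,\cdot)}\leq k$ from Proposition~\ref{prop:bounds}(ii) to control the image balls, and one must repeatedly invoke the identity $\wh D_2(\w,x)(0)=D(\w,x)$ (which holds since $\w{\cdot}0=\w$ and $x_0=x$) to translate between the transformed order $\leq_{D,A}$ and the plain order $\leq_A$ at the point $s=0$. The genuinely nontrivial analytic input — invertibility of $\wh D$, the norm bounds, and the uniform continuity for the compact-open topology — has already been established in Theorem~\ref{neutral:Dhat_properties} and Proposition~\ref{prop:bounds}, so the proof is essentially a composition argument; I would therefore present it compactly, condition by condition, referring to those results as black boxes. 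Since this closely parallels Proposition~4.1 of~\cite{paper:obvi}, I would note that the only new point is that all statements now refer to $BC$ rather than $BU$, and this causes no difficulty because the relevant facts about $\wh D$ on $BC$ were proved in Theorem~\ref{neutral:Dhat_properties}.
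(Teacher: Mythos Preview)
Your proposal is correct and follows exactly the route the paper has in mind: the paper omits the proof entirely and simply refers to the analogous Proposition~4.1 of~\cite{paper:obvi}, and your condition-by-condition verification via Theorem~\ref{neutral:Dhat_properties} and Proposition~\ref{prop:bounds} is precisely that argument transported to $BC$. One small sharpening: for the continuity part of \ref{F1} it is cleanest to observe that it is an immediate consequence of your verification of \ref{F3} (norm convergence implies $\di$-convergence within a ball), rather than relying on joint $\n{\cdot}_\infty$-continuity of $\wh D^{-1}$, which is not directly stated in Theorem~\ref{neutral:Dhat_properties}(ii).
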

\par
As a result, thanks to Theorem~\ref{neutral:Dhat_properties}, we can deduce
results concerning the local existence and uniqueness of solutions of
equation~\eqref{neutral:infdelay} on $\Om\times BC$ which are analogous to those
obtained in Section~\ref{secfun} for
equation~\eqref{neutral:transformed_family}. Therefore, we can deduce the
following result concerning the existence of solutions
of~\eqref{neutral:infdelay} with initial data in $BC$.
\begin{proposition}
  Under
  assumptions~\ref{neutral:D_linear}--\ref{neutral:D_stable},~\ref{N1}--\ref{N3},
  for each $\w\in\Om$ and each $x\in BC$, the
  system~\eqref{neutral:infdelay}$_\w$ locally admits a unique solution
  $z(\cdot,\w,x)$ with initial value $x$, i.e.~$z(s,\w,x)=x(s)$ for each
  $s\in (-\infty,0]$.
\end{proposition}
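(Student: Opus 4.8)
The plan is to transform the NFDE~\eqref{neutral:infdelay} into the FDE~\eqref{neutral:transformed_family} by means of the operator $\wh D$ and then invoke Proposition~\ref{pro:exitBC}. By Theorem~\ref{neutral:Dhat_properties}, under~\ref{neutral:D_linear}--\ref{neutral:D_stable} the map $\wh D\colon\Om\times BC\to\Om\times BC$ is well defined and invertible, $(\wh D^{-1})_2$ is linear and continuous in its second variable, and $\wh D^{-1}$ is uniformly continuous on $\Om\times B_r$ for the compact-open topology; moreover, Proposition~\ref{prop:bounds}(ii) gives $\n{(\wh D^{-1})_2(\w,\cdot)}\le k$ for all $\w\in\Om$, hence $\wh D^{-1}(\Om\times B_r)\subset\Om\times B_{kr}$. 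Set $F=G\circ\wh D^{-1}\colon\Om\times BC\to\R^m$.

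First I would check that $F$ satisfies~\ref{F1}--\ref{F3}. The verification of~\ref{F1} (and hence~\eqref{F2}) is exactly the part of the proof of Proposition~\ref{neutral:transformed_conditions1234} that uses only~\ref{neutral:D_linear}--\ref{neutral:D_stable} and~\ref{N1}, not~\ref{N4}. For~\ref{F3}, note that $F|_{\Om\times B_r}=G|_{\Om\times B_{kr}}\circ\wh D^{-1}|_{\Om\times B_r}$ is the composition of the compact-open continuous map $\wh D^{-1}$ on $\Om\times B_r$ (Theorem~\ref{neutral:Dhat_properties}(iii)) with the compact-open continuous map $G$ on $\Om\times B_{kr}$ (hypothesis~\ref{N3}), hence compact-open continuous on $\Om\times B_r$. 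With~\ref{F1}--\ref{F3} available, Proposition~\ref{pro:exitBC} applied to~\eqref{neutral:transformed_family}$_\w$ yields, for each initial datum $y\in BC$, a unique local solution $\wh z(\cdot,\w,y)$ with $\wh z(s,\w,y)=y(s)$ for $s\le0$.

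Given $(\w,x)\in\Om\times BC$, I would take $y=\wh D_2(\w,x)\in BC$ and let $\wh z=\wh z(\cdot,\w,y)$ be the corresponding FDE solution. Since $\wh z(0)=y(0)=D(\w,x)$, Proposition~\ref{existencia} furnishes $z\in C(\R,\R^m)$ with $z_0=x$ and $D(\w{\cdot}t,z_t)=\wh z(t)$ for $t\ge0$. The key identity to establish is $\wh D_2(\w{\cdot}t,z_t)=\wh z_t$ for every admissible $t\ge0$: evaluating at $s\le0$, if $t+s\ge0$ this is exactly $D(\w{\cdot}(t+s),z_{t+s})=\wh z(t+s)$, and if $t+s<0$ one uses $z_{t+s}=x_{t+s}$ together with $\wh z(t+s)=y(t+s)=D(\w{\cdot}(t+s),x_{t+s})$. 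Consequently $\wh D^{-1}(\w{\cdot}t,\wh z_t)=(\w{\cdot}t,z_t)$, so $\frac{d}{dt}D(\w{\cdot}t,z_t)=\wh z\,'(t)=F(\w{\cdot}t,\wh z_t)=G(\w{\cdot}t,z_t)$, i.e.\ $z$ solves~\eqref{neutral:infdelay}$_\w$ with initial value $x$. For uniqueness, any solution $z$ of~\eqref{neutral:infdelay}$_\w$ with $z_0=x$ produces, via the same identity, the function $\wh z_t:=\wh D_2(\w{\cdot}t,z_t)$ solving~\eqref{neutral:transformed_family}$_\w$ with $\wh z_0=y$, which is unique by Proposition~\ref{pro:exitBC}; then $z$ is recovered uniquely from the injectivity of $\wh D$ on each fiber (Theorem~\ref{neutral:Dhat_properties}(i)).

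I expect the main obstacle to be the back-transformation step, namely making rigorous that the equivalence between solutions of~\eqref{neutral:infdelay} and of the transformed FDE~\eqref{neutral:transformed_family}, already carried out in~\cite{paper:obvi} for initial data in $BU$, remains valid for initial data in $BC$. The delicate point there is precisely the identity $\wh D_2(\w{\cdot}t,z_t)=\wh z_t$, whose verification splits according to the sign of $t+s$ and relies on $\wh D$ being defined and invertible on the whole of $\Om\times BC$, as granted by Theorem~\ref{neutral:Dhat_properties}; everything else reduces to the observations of the previous paragraphs.
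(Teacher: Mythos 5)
Your proposal is correct and follows essentially the same route as the paper: transform~\eqref{neutral:infdelay}$_\w$ into the FDE~\eqref{neutral:transformed_family}$_\w$ through $\wh D$, apply Proposition~\ref{pro:exitBC} to the initial datum $\wh D_2(\w,x)$, and pull the solution back through $\wh D^{-1}$ (Theorem~\ref{neutral:Dhat_properties}). The only differences are matters of detail the paper leaves implicit --- you construct $z$ via Proposition~\ref{existencia} and verify the identity $\wh D_2(\w{\cdot}t,z_t)=\wh z_t$ explicitly, and you correctly note that only the \ref{F1}--\ref{F3} part of Proposition~\ref{neutral:transformed_conditions1234} (hence only \ref{N1}--\ref{N3}) is needed --- whereas the paper simply defines $u(t,\w,x)=(\wh D^{-1})_2(\w{\cdot}t,\wh u(t,\wh D(\w,x)))$ and cites the earlier discussion.
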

\begin{proof}
  Since $\wh D(\w,x)\in \Om\times BC$, from
  Proposition~\ref{neutral:transformed_conditions1234} and
  Proposition~\ref{pro:exitBC}, we deduce that
  system~\eqref{neutral:transformed_family}$_\w$ locally admits a unique
  solution $\wh z(\cdot , \wh D(\w,x))$. Hence, taking
  $\wh u(t,\wh D(\w,x))=\wh z_t(\wh D(\w,x))$ and
  $u(t,\w,x)= (\wh D^{-1})_2(\w{\cdot}t, \wh u(t,\wh D(\w,x)))$ for $t\ge 0$ as
  above, we conclude that
  \[
    z(t,\w,x)=\begin{cases}
      \,x(t)&\text{if }t\leq 0,\\
      \, u(t,\w,x)(0)&\text{if }t\geq 0,
    \end{cases}
  \]
  satisfies the statement.
\end{proof}
Analogously, from Proposition~\ref{continuitybolas}, we deduce the continuous
dependence for the product metric topology on sets of the form $\Om\times B_r$
for each $r\ge 0$.
\begin{proposition}\label{continuitybolasN} Under
  assumptions~\ref{neutral:D_linear}--\ref{neutral:D_stable}
  and~\ref{N1}--\ref{N3}, the local map
  \begin{equation*}
    \begin{array}{ccl}
      \mathcal{U}\subset\R^+\times\Om\times B_r& \longrightarrow & \Om\times BC\\
      (t,\w,x) & \mapsto &(\w{\cdot}t,u(t,\w,x))
    \end{array}
  \end{equation*}
  is continuous when we take the restriction of the compact-open topology to
  $B_r$, i.e.~if $t_n\to t$, $\w_n\to\wt\w$ and
  $x_n\stackrel{\textup\di\;}\to \wt x$ as $n\uparrow\infty$ with
  $x_n,\, x\in B_r$ for all $n\in\N$, then $\w_n{\cdot}t_n\to \wt\w{\cdot}t$ and
  $u(t_n,\w_n,x_n)\stackrel{\textup\di\;}\to u(t,\wt \w,\wt x)$ as
  $n\uparrow\infty$.
\end{proposition}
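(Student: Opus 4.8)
The plan is to transfer the statement to the associated family of FDEs with infinite delay~\eqref{neutral:transformed_family}, for which the analogous continuity has already been established, and then undo the change of variables. Recall that $F=G\circ\wh D^{-1}$ satisfies \ref{F1}--\ref{F4} by Proposition~\ref{neutral:transformed_conditions1234}, so in particular \ref{F1}--\ref{F3} hold and Proposition~\ref{continuitybolas} applies to~\eqref{neutral:transformed_family}. The bridge between the two semiflows is the identity
\[
  u(t,\w,x)=(\wh D^{-1})_2\bigl(\w{\cdot}t,\,\wh u(t,\w,\wh D_2(\w,x))\bigr)\,,
\]
valid whenever $u(t,\w,x)$ is defined, which is immediate from the definition $\wh u(t,\w,y)=\wh D_2(\w{\cdot}t,u(t,\wh D^{-1}(\w,y)))$ upon taking $y=\wh D_2(\w,x)$. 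Thus $u$ is the composition of $\wh D$, the time-$t$ map of the transformed FDE, and $\wh D^{-1}$, and it suffices to check that each factor is continuous on suitable sets of the form $\Om\times B_\rho$ for the compact-open topology, keeping track of the radii so that the three continuity statements can be chained.

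First I would fix $(\wt\w,\wt x)\in\Om\times B_r$ and $t\in\R^+$ with $u(t,\wt\w,\wt x)$ defined (equivalently, with $\wh u(t,\wt\w,\wh D_2(\wt\w,\wt x))$ defined), and take $t_n\to t$, $\w_n\to\wt\w$, $x_n\stackrel{\di\;}\to\wt x$ with $x_n,\wt x\in B_r$. By~\ref{neutral:D_linear} one has $\n{\wh D_2(\w,x)}_\infty\le\bigl(\sup_{\wt\w\in\Om}\n{D(\wt\w,\cdot)}\bigr)\n{x}_\infty$, so $\wh D_2(\w_n,x_n)$ and $\wh D_2(\wt\w,\wt x)$ all lie in a fixed ball $B_{r_1}$; and by Theorem~\ref{neutral:Dhat_properties}(iii), $\wh D$ is uniformly continuous on $\Om\times B_r$ for the compact-open topology, hence $\wh D_2(\w_n,x_n)\stackrel{\di\;}\to\wh D_2(\wt\w,\wt x)$. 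Applying Proposition~\ref{continuitybolas} to~\eqref{neutral:transformed_family} on $\Om\times B_{r_1}$ then gives $\w_n{\cdot}t_n\to\wt\w{\cdot}t$ and $\wh u(t_n,\w_n,\wh D_2(\w_n,x_n))\stackrel{\di\;}\to\wh u(t,\wt\w,\wh D_2(\wt\w,\wt x))$, together with a uniform bound placing these functions in a fixed ball $B_{r_2}$ for all large $n$ (the estimate~\eqref{cotaene} in that proof). Finally, since $\wh D^{-1}$ is uniformly continuous on $\Om\times B_{r_2}$ for the compact-open topology (again Theorem~\ref{neutral:Dhat_properties}(iii)), composing with the last two convergences and invoking the displayed identity yields $u(t_n,\w_n,x_n)\stackrel{\di\;}\to u(t,\wt\w,\wt x)$, as wanted.

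The step I expect to be most delicate is not any single estimate but the bookkeeping: one must first produce a common ball containing all the relevant functions before the uniform-continuity statements of Theorem~\ref{neutral:Dhat_properties}(iii) and Proposition~\ref{continuitybolas} can be invoked, and one must argue that $u(t_n,\w_n,x_n)$ is actually defined for $n$ large --- this follows from Proposition~\ref{continuitybolas} applied to the transformed equation together with the extension of $\wh D^{-1}$ to all of $\Om\times BC$. Once these uniform bounds are in place the argument is a routine composition, essentially the same juggling of the operators $\wh D^{\pm1}$ carried out for the space $BU$ in Proposition~4.1 and the surrounding results of~\cite{paper:obvi}.
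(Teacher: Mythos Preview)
Your approach is exactly the one in the paper: the authors' proof consists of the single sentence that the result follows from the relation $\wh D(\w{\cdot}t,u(t,\w,x))=(\w{\cdot}t,\wh u(t,\wh D(\w,x)))$ together with Theorem~\ref{neutral:Dhat_properties} and Proposition~\ref{continuitybolas}, and you have simply spelled out the composition and the radius bookkeeping that this entails. One small point: you invoke Proposition~\ref{neutral:transformed_conditions1234}, whose hypotheses include~\ref{N4}, while the present statement assumes only~\ref{N1}--\ref{N3}; since you only need that $F$ satisfies~\ref{F1}--\ref{F3}, it would be cleaner to note that these three conditions follow from~\ref{N1}--\ref{N3} (the proof of Proposition~\ref{neutral:transformed_conditions1234} treats the four conditions separately).
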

\begin{proof}
  It is an easy consequence of the relation
  \begin{equation}\label{u-uhat}
    \wh D(\w{\cdot}t,u(t,\w,x))=(\w{\cdot}t,\wh u(t,\wh D(\w,x)))\,,
  \end{equation}
  Theorem~\ref{neutral:Dhat_properties} and Proposition~\ref{continuitybolas}.
\end{proof}
As in~Theorem~4.2 of~\cite{paper:obvi}, the following monotonicity theorem,
whose proof is omitted, is an immediate consequence of
Theorem~\ref{preliminaries:monotone} and
Proposition~\ref{neutral:transformed_conditions1234}.
\begin{theorem}\label{neutral:monotone_tau}
  Under assumptions~\ref{neutral:D_linear}--\ref{neutral:D_stable}
  and~\ref{N1}--\ref{N4}, for each $\w\in\Omega$ and $x$, $y\in BC$ such that
  $(\w,x)\leq_{D,\,A} (\w,y)$, it holds that
  \[\tau(t,\w,x)\leq_{D,\,A} \tau(t,\w,y)\] for all $t\ge 0$ where they are
  defined.
\end{theorem}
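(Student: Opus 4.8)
The plan is to transfer the monotonicity of the transformed FDE semiflow back through the conjugacy $\wh D$. By~\eqref{transfexporder}, the relation $(\w,x)\leq_{D,A}(\w,y)$ means by definition that $\wh D_2(\w,x)\leq_A\wh D_2(\w,y)$ in $BC$, and, as recalled just before Proposition~\ref{neutral:transformed_conditions1234} (following~\cite{paper:obvi}), the operator $\wh D$ conjugates the local semiflow $\tau$ of~\eqref{neutral:infdelay} with the local semiflow generated on $\Om\times BC$ by the transformed family~\eqref{neutral:transformed_family}, $\wh z\,'(t)=F(\w{\cdot}t,\wh z_t)$ with $F=G\circ\wh D^{-1}$; concretely, for every $t\ge 0$ in the common interval of existence one has relation~\eqref{u-uhat}, whose second component reads
\[
  \wh D_2(\w{\cdot}t,u(t,\w,x))=\wh u(t,\wh D(\w,x))\,,
\]
$\wh u$ being the solution-segment map of~\eqref{neutral:transformed_family}. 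Since $\wh D$ is a bijection by Theorem~\ref{neutral:Dhat_properties}(i), the maximal interval of definition of $u(\cdot,\w,x)$ coincides with that of the solution of~\eqref{neutral:transformed_family} through $\wh D_2(\w,x)$, so the clause ``where they are defined'' transfers verbatim between the two pictures.

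Next I would invoke Proposition~\ref{neutral:transformed_conditions1234}, which guarantees that $F=G\circ\wh D^{-1}$ satisfies~\ref{F1}--\ref{F4}. Hence Theorem~\ref{preliminaries:monotone}, applied to~\eqref{neutral:transformed_family} and its extension to $\Om\times BC$, gives the monotonicity of $\wh u$ for the exponential order $\leq_A$: whenever $\wh x,\wh y\in BC$ satisfy $\wh x\leq_A\wh y$, then $\wh u(t,\w,\wh x)\leq_A\wh u(t,\w,\wh y)$ for all $t\ge 0$ where both are defined.

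To conclude, set $\wh x=\wh D_2(\w,x)$ and $\wh y=\wh D_2(\w,y)$, so that $\wh D^{-1}(\w,\wh x)=(\w,x)$ and $\wh D^{-1}(\w,\wh y)=(\w,y)$. The hypothesis $(\w,x)\leq_{D,A}(\w,y)$ is exactly $\wh x\leq_A\wh y$, so the previous step together with the displayed conjugacy relation yields
\[
  \wh D_2(\w{\cdot}t,u(t,\w,x))=\wh u(t,\w,\wh x)\leq_A\wh u(t,\w,\wh y)=\wh D_2(\w{\cdot}t,u(t,\w,y))
\]
for all admissible $t\ge 0$, which by~\eqref{transfexporder} is precisely $\tau(t,\w,x)\leq_{D,A}\tau(t,\w,y)$. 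There is no serious obstacle here: the whole argument rests on the conjugacy between $\tau$ and the transformed FDE semiflow and on the fact that $F=G\circ\wh D^{-1}$ inherits~\ref{F1}--\ref{F4}, both already established; the only point worth double-checking is that the FDE monotonicity result has been formulated on $\Om\times BC$ rather than merely on $\Om\times BU$, which it has.
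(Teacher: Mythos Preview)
Your proof is correct and follows precisely the approach indicated in the paper: the authors omit the proof, noting only that it ``is an immediate consequence of Theorem~\ref{preliminaries:monotone} and Proposition~\ref{neutral:transformed_conditions1234}'' (as in Theorem~4.2 of~\cite{paper:obvi}), which is exactly the argument you have written out in full using the conjugacy~\eqref{u-uhat} and the definition~\eqref{transfexporder}.
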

\begin{lemma}\label{neutral:bounds_D} Under
  assumptions~\ref{neutral:D_linear}--\ref{neutral:D_stable}, there exist
  positive constants $K_D$ and $K_D'$ such that
  \[
    K_D=\sup_{\w\in\Om}\n{D(\w,{\cdot})}=\sup_{\w\in\Om}\n{\wh
      D_2(\w,{\cdot})}\quad \text{and} \quad K_D'=\sup_{\w\in\Om}\n{(\wh
      D^{-1})_2(\w,{\cdot})}\,.
  \]
\end{lemma}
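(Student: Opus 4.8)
The plan is to handle the three quantities separately: each is finite because it is dominated by a uniform-in-$\w$ estimate already at our disposal, each is positive because the relevant operators are nonzero, and the asserted equality $\sup_{\w\in\Om}\n{D(\w,\cdot)}=\sup_{\w\in\Om}\n{\wh D_2(\w,\cdot)}$ will follow from a two-sided fiberwise comparison.

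First I would show $K_D:=\sup_{\w\in\Om}\n{D(\w,\cdot)}<\infty$. By~\ref{neutral:D_linear} the map $\w\mapsto D(\w,\cdot)$ is continuous from $\Om$ into $\mathcal L(BC,\R^m)$, so $\w\mapsto\n{D(\w,\cdot)}$ is a continuous function on the compact space $\Om$ and hence bounded (the supremum is even attained). Next, for the equality, I would reuse the estimate already carried out in the proof of Theorem~\ref{neutral:Dhat_properties}(i): for $x\in BC$, $\w\in\Om$ and $s\leq 0$ one has $\n{x_s}_\infty\leq\n{x}_\infty$, hence $\n{D(\w{\cdot}s,x_s)}\leq\n{D(\w{\cdot}s,\cdot)}\,\n{x}_\infty\leq K_D\,\n{x}_\infty$; taking the supremum over $s\leq 0$ gives $\n{\wh D_2(\w,x)}_\infty\leq K_D\,\n{x}_\infty$, that is, $\n{\wh D_2(\w,\cdot)}\leq K_D$ for every $\w$. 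For the opposite comparison, evaluating at $s=0$ yields $D(\w,x)=\wh D_2(\w,x)(0)$, so $\n{D(\w,x)}\leq\n{\wh D_2(\w,x)}_\infty\leq\n{\wh D_2(\w,\cdot)}\,\n{x}_\infty$, and thus $\n{D(\w,\cdot)}\leq\n{\wh D_2(\w,\cdot)}$ for every $\w$. Taking suprema over $\w$ in these two fiberwise inequalities sandwiches $\sup_\w\n{\wh D_2(\w,\cdot)}$ between $K_D$ and $K_D$, which proves $\sup_\w\n{\wh D_2(\w,\cdot)}=K_D$. Positivity of $K_D$ is then immediate, since each $\wh D_2(\w,\cdot)$ is a bijection of $BC$ by Theorem~\ref{neutral:Dhat_properties}(i), hence nonzero. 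I would stress that $\n{\wh D_2(\w,\cdot)}$ and $\n{D(\w,\cdot)}$ need not agree on a single fiber, because the former involves $D(\w{\cdot}s,\cdot)$ along the whole negative semiorbit of $\w$; it is only the suprema over $\Om$ that coincide, which is exactly what the statement claims.

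For $K_D'$ I would argue similarly. Each $(\wh D^{-1})_2(\w,\cdot)$ is a nonzero bounded linear operator on $BC$ by Theorem~\ref{neutral:Dhat_properties}(i)--(ii), so $K_D':=\sup_\w\n{(\wh D^{-1})_2(\w,\cdot)}>0$. For finiteness, fix $\w\in\Om$ and $h\in BC$ and put $x^h:=(\wh D^{-1})_2(\w,h)\in BC$; by definition of $\wh D^{-1}$ this means $\wh D_2(\w,x^h)=h$, i.e. $D(\w{\cdot}s,x^h_s)=h(s)$ for all $s\leq 0$, so Proposition~\ref{prop:bounds}(ii) gives $\n{x^h}_\infty\leq k\,\n{h}_\infty$ with $k$ independent of $\w$ and $h$. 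Hence $\n{(\wh D^{-1})_2(\w,\cdot)}\leq k$ for all $\w$, and $K_D'\leq k<\infty$. None of this is genuinely difficult; the only points deserving care are the compactness-plus-continuity argument that bounds $K_D$, the uniform-in-$\w$ nature of the constant $k$ in Proposition~\ref{prop:bounds}, and the correct use of the fiberwise identities $D(\w,x)=\wh D_2(\w,x)(0)$ and $\wh D_2(\w,(\wh D^{-1})_2(\w,h))=h$ rather than confusing them with a non-existent operator-norm identity on individual fibers.
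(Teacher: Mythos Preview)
Your proof is correct and follows essentially the same approach as the paper: continuity of $\w\mapsto\n{D(\w,\cdot)}$ plus compactness of $\Om$ for finiteness of $K_D$, the two-sided fiberwise comparison $\n{D(\w,\cdot)}\leq\n{\wh D_2(\w,\cdot)}\leq K_D$ via the identities $D(\w,x)=\wh D_2(\w,x)(0)$ and $\n{x_s}_\infty\leq\n{x}_\infty$, and Proposition~\ref{prop:bounds}(ii) for $K_D'$. Your treatment is somewhat more explicit about positivity and about why only the suprema over $\Om$ coincide, but the substance is identical.
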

\begin{proof}
  The map $\Om\to\R$, $\w\to\n{D(\w,{\cdot})}$ is continuous thanks to
  \ref{neutral:D_linear}. Since $\Om$ is compact, there is a $K_D>0$ such that
  $K_D=\sup_{\w\in\Om}\n{D(\w,{\cdot})}$. Fix $(\w,x)\in\Om\times B_1$; then
  \begin{align*}
    \n{\wh D_2(\w,x)}_\infty & =\sup_{s\leq 0}\n{D(\w{\cdot}s,x_s)}\leq
                               K_D\,\n{x_s}_\infty\leq K_D \quad  \text{and}\\
    \n{D(\w, x)}& =\n{\wh D_2(\w,x)(0)}\leq\n{\wh
                  D_2(\w,x)}_\infty\,,
  \end{align*}
  whence $K_D=\sup_{\w\in\Om}\n{\wh D_2(\w,{\cdot})}$. As for the bound for
  $\wh D^{-1}$, it follows immediately from Proposition~\ref{prop:bounds}(ii).
\end{proof}
The next result provides the main properties of the trajectory and the
omega-limit set of a bounded solution.
\begin{proposition}
  Assume~\ref{neutral:D_linear}--\ref{neutral:D_stable} and~\ref{N1}--\ref{N3}.
  If $z({\cdot},\w_0,x_0)$ is a solution of~\eqref{neutral:infdelay}$_{\w_0}$
  bounded for the norm $\n{{\cdot}}_\infty$, then the set
  $\{u(t,\w_0,x_0)\mid t\ge 0\}$ is relatively compact when the compact-open
  topology is considered on $BC$ and the omega-limit set of the trajectory of
  the point $(\w_0,x_0)\in\Om\times BC$, defined as
  \[\mathcal{O}(\w_0,x_0)=\{(\w,x)\mid \exists \,t_n\uparrow
    \infty\;\textup{ with } \,\w_0{\cdot}t_n\to\w\,,\;
    u(t_n,\w_0,x_0)\stackrel{\di\;}\to x\}\] is a nonempty, compact and
  invariant subset of $\Om\times BU$ admitting a flow extension.
\end{proposition}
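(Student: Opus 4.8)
The plan is to reduce the statement to the analogous results for the transformed family~\eqref{neutral:transformed_family} via the invertible operator $\wh D$. Set $y_0=\wh D_2(\w_0,x_0)\in BC$, so $(\w_0,y_0)=\wh D(\w_0,x_0)$, and recall that $F=G\circ\wh D^{-1}$ satisfies~\ref{F1}--\ref{F3} (this portion of the argument of Proposition~\ref{neutral:transformed_conditions1234} only uses~\ref{N1}--\ref{N3}, so the missing~\ref{N4} is harmless here), and that $\wh z(\cdot,\w_0,y_0)$ solves~\eqref{neutral:transformed_family}$_{\w_0}$ with initial datum $y_0$. Since $z(\cdot,\w_0,x_0)$ is bounded for $\n{\cdot}_\infty$, put $r=\sup_{t\geq0}\n{u(t,\w_0,x_0)}_\infty<\infty$; then, using $\wh u(t,\w_0,y_0)=\wh D_2(\w_0{\cdot}t,u(t,\w_0,x_0))$ and Lemma~\ref{neutral:bounds_D}, one gets $\n{\wh u(t,\w_0,y_0)}_\infty\leq K_D\,r$ for all $t\geq0$, so $\wh z(\cdot,\w_0,y_0)$ is bounded for $\n{\cdot}_\infty$ as well.

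With this in place, Proposition~\ref{trajectory} applied to~\eqref{neutral:transformed_family} gives that $\{\wh u(t,\w_0,y_0)\mid t\geq0\}$ is relatively compact in $BC$ for the compact-open topology, and Proposition~\ref{omegalimit} gives that the omega-limit set $\wh{\mathcal O}$ of $(\w_0,y_0)$ for the transformed semiflow is a nonempty, compact, invariant subset of $\Om\times BU$ admitting a flow extension. Both $\{\wh u(t,\w_0,y_0)\mid t\geq0\}$ and $\wh{\mathcal O}$ lie in $\Om\times B_{K_D r}$, a set on which $\wh D^{-1}$ is uniformly continuous for the compact-open topology by Theorem~\ref{neutral:Dhat_properties}(iii) and which $\wh D^{-1}$ maps into a bounded subset of $\Om\times BC$ by Lemma~\ref{neutral:bounds_D}. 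Since $u(t,\w,x)=(\wh D^{-1})_2(\w{\cdot}t,\wh u(t,\wh D(\w,x)))$, the trajectory $\{u(t,\w_0,x_0)\mid t\geq0\}$ is the projection onto the $BC$-factor of the image under $\wh D^{-1}$ of a relatively compact set, hence relatively compact for the compact-open topology.

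For the omega-limit set I would use the sequential continuity of $\wh D$ and $\wh D^{-1}$ on balls (again Theorem~\ref{neutral:Dhat_properties}(iii)) to obtain the identification $\mathcal{O}(\w_0,x_0)=\wh D^{-1}(\wh{\mathcal O})$: if $\w_0{\cdot}t_n\to\w$ and $u(t_n,\w_0,x_0)\stackrel{\di}\to x$, applying $\wh D$ gives $(\w_0{\cdot}t_n,\wh u(t_n,\w_0,y_0))\to(\w,\wh D_2(\w,x))$, so $(\w,\wh D_2(\w,x))\in\wh{\mathcal O}$, and the reverse inclusion follows symmetrically by applying $\wh D^{-1}$. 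From this identification, $\mathcal{O}(\w_0,x_0)$ is nonempty and compact as the image of the compact set $\wh{\mathcal O}\subset\Om\times B_{K_D r}$ under the continuous map $\wh D^{-1}$; it is contained in $\Om\times BU$ because $\wh{\mathcal O}\subset\Om\times BU$ and, by the $BU$ theory of~\cite{paper:obvi}, $(\wh D^{-1})_2(\w,\cdot)$ sends $BU$ into $BU$; and it is $\tau$-invariant and admits a flow extension because the conjugacy $\wh D\circ\tau_t=\wh\tau_t\circ\wh D$ (which follows from~\eqref{u-uhat}) carries the $\wh\tau$-invariance and the unique backward orbits characterising the flow extension of $\wh{\mathcal O}$ (Theorem~2.3, part~II, of~\cite{book:shyi}) to the corresponding statements for $\mathcal{O}(\w_0,x_0)$.

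The verifications are mostly continuity bookkeeping; the point that needs care is to keep every use of the conjugacy, and of the compactness and flow-extension transfers, within the bounded balls $\Om\times B_\rho$ on which $\wh D$ and $\wh D^{-1}$ are homeomorphisms for the compact-open topology --- which is exactly what the uniform $\n{\cdot}_\infty$-bound on the solution guarantees. Alternatively the result can be proved directly, copying the arguments of Propositions~\ref{trajectory} and~\ref{omegalimit} with $z'$ replaced by $\frac{d}{dt}D(\w{\cdot}t,z_t)$ and the estimates of Proposition~\ref{prop:bounds} playing the role of~\eqref{F2}, but the reduction through $\wh D$ is shorter and reuses the work of Section~\ref{secfun}.
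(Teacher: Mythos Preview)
Your proposal is correct and follows essentially the same route as the paper: both reduce to the transformed family via $\wh D$, use Lemma~\ref{neutral:bounds_D} to bound $\wh u$, apply Propositions~\ref{trajectory} and~\ref{omegalimit}, establish the identification $\mathcal{O}(\w_0,x_0)=\wh D^{-1}\big(\wh{\mathcal O}(\w_0,\wh D_2(\w_0,x_0))\big)$, and then transfer compactness and the $BU$ inclusion through Theorem~\ref{neutral:Dhat_properties}(iii). The only minor difference is that for invariance and the flow extension the paper re-runs the argument of Proposition~\ref{omegalimit} using Proposition~\ref{continuitybolasN}, whereas you transfer them directly via the conjugacy $\wh D\circ\tau_t=\wh\tau_t\circ\wh D$; both are valid and equally short.
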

\begin{proof} From~\eqref{u-uhat}, we deduce that
  $\wh u(t,\wh D(\w_0,x_0))=\wh D_2(\w_0{\cdot}t,u(t,\w_0,x_0))$, whence
  \begin{equation}\label{OOhat}
    \mathcal{O}(\w_0,x_0)=\wh D^{-1}  \big(\wh{\mathcal{O}}(\w_0,\wh D_2(\w_0,x_0))\big)\,,
  \end{equation}
  where $\wh{\mathcal{O}}(\w_0,\wh D_2(\w_0,x_0))$ denotes the omega-limit set
  corresponding to the transformed
  system~\eqref{neutral:transformed_family}$_{\w_0}$ with initial datum
  $\wh D_2(\w_0,x_0)$.  From the boundedness of $u(t,\w_0,x_0)$ and
  Lemma~\ref{neutral:bounds_D}, we deduce the boundedness of
  $\wh u(t,\wh D(\w_0,x_0))$ and, consequently,
  Propositions~\ref{neutral:transformed_conditions1234}, \ref{trajectory}
  and~\ref{omegalimit} imply that $\{\wh u(t,\wh D(\w_0,x_0))\mid t\ge 0\}$ is a
  relatively compact subset of $BC$ and
  $\wh{\mathcal{O}}(\w_0,\wh D_2(\w_0,x_0))$ is a compact and invariant subset
  of $\Om\times BU$ admitting a flow extension.  Finally,~\eqref{OOhat} and
  Theorem~\ref{neutral:Dhat_properties}(iii) imply that
  $\{u(t,\w_0,x_0)\mid t\ge 0\}$ is a relatively compact subset of $BC$ and
  $\mathcal{O}(\w_0,x_0)$ is a compact subset of $\Om\times BU$. The invariance
  and flow extension follow the proof of Proposition~\ref{omegalimit}, taking
  into account that, now, Proposition~\ref{continuitybolasN} holds.
\end{proof}
In order to obtain the 1-covering property of some omega-limit sets, in addition
to Hypotheses~\ref{N1}--\ref{N4}, the uniform stability and the componentwise
separating property are assumed.
\begin{enumerate}[resume*=neutral_properties]\setlength\itemsep{0.4em}
\item\label{N5} If $(\w,x)$, $(\w,y)\in\Om\times BC$ admit a backward orbit
  extension, $(\w,x)\leq_{D,\,A} (\w,y)$, and there is a subset
  $J\subset\{1,\ldots,m\}$ such that
  \begin{equation}\label{conditionsN6}
    \begin{split}
      \wh D_2(\w,x)_i=\wh D_2(\w,y)_i & \quad \text{ for each } i\notin J\,,\\
      \wh D_2(\w,x)_i(s)< \wh D_2(\w,y)_i(s) & \quad \text{ for each } i\in
      J\;\text{ and } s\leq 0\,,
    \end{split}
  \end{equation}
  then $G_i(\w,y)-G_i(\w,x) - [A\,(D(\w,y)-D(\w,x))]_i> 0$ for each $i\in J$.
\item\label{N6} For each $k\in\N$, $B_k$ is uniformly stable for the order
  $\leq_{D,\,A}$.\smallskip
\end{enumerate}
\par\smallskip
As in Proposition~\ref{neutral:transformed_conditions1234}, these two
assumptions provide~\ref{F5} and~\ref{F6} for $F=G\circ\wh D^{-1}$.
\begin{proposition}\label{F5F6}
  Under assumptions~\ref{neutral:D_linear}--\ref{neutral:D_stable} and
  \ref{N1}--\ref{N4}, if $G$ satisfies~\ref{N5} and~\ref{N6}, then the map
  $F=G\circ\wh D^{-1}$ satisfies \ref{F5} and \ref{F6}.
\end{proposition}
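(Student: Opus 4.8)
The plan is to deduce~\ref{F5} and~\ref{F6} for $F=G\circ\wh D^{-1}$ by transporting~\ref{N5} and~\ref{N6} along the conjugacy provided by the operator $\wh D$ of Theorem~\ref{neutral:Dhat_properties}. The dictionary is as follows: $\wh D$ is fibre-preserving and invertible and, by~\eqref{u-uhat}, conjugates the neutral semiflow of~\eqref{neutral:infdelay} with the semiflow of~\eqref{neutral:transformed_family}, so a point $(\w,\cdot)$ admits a backward orbit extension for one of these semiflows if and only if its image under $\wh D$ or $\wh D^{-1}$ does so for the other; by~\eqref{transfexporder}, $(\w,\xi)\leq_{D,A}(\w,\eta)$ iff $\wh D_2(\w,\xi)\leq_A\wh D_2(\w,\eta)$; and, since $\wh D_2(\w,\xi)(s)=D(\w{\cdot}s,\xi_s)$, evaluating at $s=0$ gives $\wh D_2(\w,\xi)(0)=D(\w,\xi)$, so that $D\circ\wh D^{-1}$ is the evaluation $(\w,x)\mapsto x(0)$ and $F(\w,x)=G(\wh D^{-1}(\w,x))$.

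For~\ref{F5}: suppose $(\w,x),(\w,y)\in\Om\times BC$ admit a backward orbit extension for~\eqref{neutral:transformed_family}, $x\leq_A y$, and $J\subset\{1,\dots,m\}$ satisfies $x_i=y_i$ for $i\notin J$ and $x_i(s)<y_i(s)$ for $i\in J$ and $s\leq 0$. Set $(\w,\xi)=\wh D^{-1}(\w,x)$ and $(\w,\eta)=\wh D^{-1}(\w,y)$, so $x=\wh D_2(\w,\xi)$ and $y=\wh D_2(\w,\eta)$. By the dictionary, $(\w,\xi)$ and $(\w,\eta)$ admit a backward orbit extension for~\eqref{neutral:infdelay}, $(\w,\xi)\leq_{D,A}(\w,\eta)$, and the conditions on $J$ become exactly~\eqref{conditionsN6}. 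Hence~\ref{N5} gives $G_i(\w,\eta)-G_i(\w,\xi)-[A\,(D(\w,\eta)-D(\w,\xi))]_i>0$ for each $i\in J$; substituting $F_i(\w,x)=G_i(\w,\xi)$, $F_i(\w,y)=G_i(\w,\eta)$, $x(0)=D(\w,\xi)$ and $y(0)=D(\w,\eta)$ rewrites this as $F_i(\w,y)-F_i(\w,x)-(A\,(y(0)-x(0)))_i>0$, which is the conclusion of~\ref{F5}.

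For~\ref{F6}: fix $k\in\N$ and $\eps>0$. By Lemma~\ref{neutral:bounds_D}, $(\wh D^{-1})_2$ maps $\Om\times B_k$ into $\Om\times B_{\ell}$ with $\ell=\lceil K_D'\,k\rceil$, and $\wh D_2$ maps $\Om\times B_{\ell}$ into a ball of fixed radius. Choose $\eps_1>0$ from the uniform continuity of $\wh D$ on $\Om\times B_{\ell}$ for the compact-open topology (Theorem~\ref{neutral:Dhat_properties}(iii)) corresponding to $\eps$; then a modulus $\delta_1>0$ of uniform stability of $B_{\ell}$ for $\leq_{D,A}$ corresponding to $\eps_1$ (available by~\ref{N6}, which extends from integer radii to arbitrary radii since $B_r\subset B_{\lceil r\rceil}$); and finally $\delta>0$ from the uniform continuity of $\wh D^{-1}$ on $\Om\times B_k$ corresponding to $\delta_1$. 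If $x,y\in B_k$ satisfy $\di(x,y)<\delta$ and, say, $x\leq_A y$, then $(\w,\xi)=\wh D^{-1}(\w,x)$ and $(\w,\eta)=\wh D^{-1}(\w,y)$ lie in $\Om\times B_{\ell}$ with $\di(\xi,\eta)<\delta_1$ and $(\w,\xi)\leq_{D,A}(\w,\eta)$, so the uniform stability of $B_{\ell}$ keeps the corresponding solutions of~\eqref{neutral:infdelay} within $\eps_1$ for all $t\geq 0$; applying $\wh D$ fibrewise through~\eqref{u-uhat} brings this back to $\di(\wh u(t,\w,x),\wh u(t,\w,y))<\eps$ for every $t\geq 0$, where $\wh u$ denotes the semiflow of~\eqref{neutral:transformed_family}, so~\ref{F6} holds.

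Everything in~\ref{F5} is a direct translation once the dictionary is in place. The part requiring care is~\ref{F6}: one must keep track of how $\wh D^{\pm1}$ rescale the radii of the balls $B_r$, which is where Lemma~\ref{neutral:bounds_D} enters, and transfer the modulus of stability across the uniform continuity on bounded sets of $\wh D$ and $\wh D^{-1}$ given by Theorem~\ref{neutral:Dhat_properties}(iii). This follows the pattern of the corresponding result for $BU$ in~\cite{paper:obvi}.
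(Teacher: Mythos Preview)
Your argument for~\ref{F5} is correct and is exactly what the paper has in mind (the paper in fact omits it as the easier of the two).

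Your argument for~\ref{F6} has a genuine gap at the last step. You invoke the uniform continuity of $\wh D$ on $\Om\times B_\ell$ to pass from $\di(u(t,\w,\xi),u(t,\w,\eta))<\eps_1$ to $\di(\wh u(t,\w,x),\wh u(t,\w,y))<\eps$. But uniform continuity of $\wh D$ is only available on bounded sets, and nothing in~\ref{N6} guarantees that the trajectories $u(t,\w,\xi)$ and $u(t,\w,\eta)$ remain in $B_\ell$ (or in any fixed ball) for all $t\geq 0$: uniform stability controls the $\di$-distance between the two trajectories, not their individual $\n{\cdot}_\infty$-sizes. So the final application of $\wh D$ is unjustified as written.

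The paper repairs exactly this point by exploiting the \emph{linearity} of $\wh D_2(\w,\cdot)$ and bounding the \emph{difference} $u(t,\w,\xi)-u(t,\w,\eta)$ rather than each trajectory separately. Concretely, one first shrinks the intermediate tolerance (your $\eps_1$, the paper's $\delta_1$) so that $\di(w,0)<\delta_1$ forces $\n{w(0)}\leq 1$; since $z(t+s,\w,\xi)-z(t+s,\w,\eta)=(u(t+s,\w,\xi)-u(t+s,\w,\eta))(0)$, this bounds the difference by $1$ on $[-t,0]$, while on $(-\infty,-t]$ it is bounded by $2\ell$ because $\xi,\eta\in B_\ell$. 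Hence $u(t,\w,\xi)-u(t,\w,\eta)\in B_r$ with $r=\max(2\ell,1)$ for all $t\geq 0$. Now one uses the uniform continuity of $\wh D$ on $\Om\times B_r$ together with linearity to get $\di(\wh D_2(\w{\cdot}t,u(t,\w,\xi)),\wh D_2(\w{\cdot}t,u(t,\w,\eta)))<\eps$. Your chain of $\eps$--$\delta$ choices is the right skeleton; what is missing is this linearity-and-difference trick to stay inside a fixed ball when applying $\wh D$.
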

\begin{proof} First, we check~\ref{F6}. Notice that
  $\wh u(t,\w,x)=\wh D_2(\w{\cdot}t,u(t,\wh D^{-1}(\w,x)))$.  Fix a $k>0$. From
  Lemma~\ref{neutral:bounds_D}, there is a $\wt k>0$ such that, if $x\in B_k$,
  then $(\wh D^{-1})_2(\w,x)\in B_{\wt k}$ for each $\w\in\Om$. In addition, it
  is not hard to check that
  \begin{equation}\label{BkBkt}
    \begin{array}{c}
      x, y\in B_k \\[.1cm]
      x\leq_A y \;\text{ or }\; y\leq _A x
    \end{array}\Longrightarrow
    \begin{array}{l} \wt x= (\wh D^{-1})_2(\w,x),\; \wt y= (\wh D^{-1})_2(\w,y)\in B_{\wt k} \\[.1cm]
      (\w,\wt x) \leq _{D,\,A} (\w,\wt y)
      \;\text{ or }\; (\w,\wt y)\leq _{D,\,A} (\w,\wt x)
    \end{array}\,.
  \end{equation}
  Next, let $r=\max(2\,\wt k,1)$ and $\eps>0$. From
  Theorem~\ref{neutral:Dhat_properties}, there is a $\delta_1>0$ such that
  \begin{align}\label{delta11}
    \{x\in BC\mid \di(x,0)<\delta_1\}\subset \{x\in BC \mid \n{x(0)}\leq 1\} \;& \text{ and}\\ \label{delta12}
    \di(\wh D_2(\w,x),\wh D_2(\w,y))< \eps \quad \forall\, \w\in\Om \text{ and }  x,y\in B_r \text{ with } &\di(x,y)<\delta_1\,.
  \end{align}
  Now, from assumption~\ref{N6}, given this $\delta_1>0$, there is a
  $\delta_2>0$ such that if $\wt x$, $\wt y\in B_{\wt k}$ satisfy
  $\di(\wt x,\wt y)<\delta_2$ and $(\w,\wt x)\leq_{D,\,A} (\w,\wt y)$ or
  $(\w,\wt y)\leq_{D,\,A} (\w,\wt x)$, then
  $\di(u(t,\w,\wt x),u(t,\w,\wt y))< \delta_1$ for each $t\geq 0$.\par
  Moreover, with the notation of~\eqref{BkBkt}, again
  Theorem~\ref{neutral:Dhat_properties} provides a $\delta>0$ such that, for
  each $\w\in\Om$ and $x$, $y\in B_k$ with $\di(x,y)<\delta$, it holds
  \begin{equation}\label{cotaD-1}
    \di(\wt x,\wt y)=\di\big((\wh D^{-1})_2(\w,x),(\wh D^{-1})_2(\w,y)\big)< \delta_2\,.
  \end{equation}
  Altogether, if $x$, $y\in B_k$ satisfy $\di(x,y)<\delta$ and $x\leq_A y$ or
  $y\leq_A x$, from~\eqref{BkBkt} and~\eqref{cotaD-1}, we deduce that
  $(\w,\wt x) \leq _{D,\,A} (\w,\wt y)$ or $(\w,\wt y)\leq _{D,\,A} (\w,\wt x)$
  and $\di(\wt x,\wt y)<\delta_2$. Then, as seen above,
  $\di(u(t,\w,\wt x),u(t,\w,\wt y))< \delta_1$ for each $t\geq 0$ and, from this
  and~\eqref{delta11}, we get $\n{(u(t,\w,\wt x)-u(t,\w,\wt y))(0)}\leq 1$ for
  each $t\geq 0$, which together with $\wt x$, $\wt y\in B_{\wt k}$ yields
  $u(t,\w,\wt x)-u(t,\w,\wt y)\in B_r$. Finally, from~\eqref{delta12} and the
  linear character of $\wh D_2$, we conclude that
  \[
    \di(\wh D_2(\w{\cdot}t,u(t,\w,\wt x),\wh D_2(\w{\cdot}t,u(t,\w,\wt
    y)))=\di(\wh u(t,\w,x),\wh u(t,\w,y)) <\eps
  \]
  for each $t\geq 0 $ and~\ref{F6} holds, as claimed. The verification
  that~\ref{F5} is fulfilled is easier and it is omitted.
\end{proof}
As a consequence, the asymptotic behavior of bounded trajectories  with initial datum $x_0$ such that $\wh D_2(\w_0,x_0)$ satisfies~\ref{functional:regularity}, reproduces exactly the dynamics exhibited by the time variation of the equation, as claimed.
\begin{theorem}\label{copybaseR}
  Let $(\w_0,x_0)\in \Om\times BC$. Under
  assumptions~\ref{neutral:D_linear}--\ref{neutral:D_stable}
  and~\ref{N1}--\ref{N6}, if $\wh D_2(\w_0,x_0)$ satisfies
  property~\ref{functional:regularity} and $z(\cdot,\w_0,x_0)$ is a solution
  of~\eqref{neutral:infdelay}$_{\w_0}$ bounded for the norm
  $\n{{\cdot}}_\infty$, then the omega-limit set
  $\mathcal{O}(\w_0,x_0)=\{(\w,c(\w))\mid \w\in\Om\}$ is a copy of the base and
  \[\lim_{t\to\infty}
    \textup\di(u(t,\w_0,x_0),c(\w_0{\cdot}t))=0\,,\] where $c:\Om\to BU$ is a
  continuous equilibrium, i.e. $u(t,\w,c(\w))=c(\w{\cdot}t)$ for each $\w\in\Om$
  and $t\geq 0$, and it is continuous for the compact-open topology on $BU$.
\end{theorem}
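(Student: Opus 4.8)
The plan is to reduce the statement to the functional-differential case already settled in Theorem~\ref{copiabasenuevo}, exploiting the conjugacy between the NFDE semiflow and the transformed FDE semiflow encoded in~\eqref{u-uhat} and~\eqref{OOhat}. First I would set $y_0=\wh D_2(\w_0,x_0)$, which by hypothesis lies in $BC$ and satisfies property~\ref{functional:regularity}. By Proposition~\ref{neutral:transformed_conditions1234} the map $F=G\circ\wh D^{-1}$ satisfies~\ref{F1}--\ref{F4}, and by Proposition~\ref{F5F6} it satisfies~\ref{F5} and~\ref{F6} as well; hence the transformed family~\eqref{neutral:transformed_family} falls entirely within the scope of Section~\ref{secfun}. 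It remains to check that the transformed solution $\wh z(\cdot,\w_0,y_0)$ is bounded for $\n{\cdot}_\infty$: from~\eqref{u-uhat} we have $\wh u(t,\w_0,y_0)=\wh D_2(\w_0{\cdot}t,u(t,\w_0,x_0))$, so Lemma~\ref{neutral:bounds_D} gives $\n{\wh u(t,\w_0,y_0)}_\infty\leq K_D\,\n{u(t,\w_0,x_0)}_\infty$, which is finite and bounded uniformly in $t$ because $z(\cdot,\w_0,x_0)$ is bounded.

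Next I would apply Theorem~\ref{copiabasenuevo} to the point $(\w_0,y_0)$ for the transformed system. This produces a continuous equilibrium $\wh c\colon\Om\to BU$ of~\eqref{neutral:transformed_family}, continuous for the compact-open topology, with $\wh{\mathcal O}(\w_0,y_0)=\{(\w,\wh c(\w))\mid\w\in\Om\}$ a copy of the base and $\lim_{t\to\infty}\di(\wh u(t,\w_0,y_0),\wh c(\w_0{\cdot}t))=0$.

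Then I would pull everything back through $\wh D^{-1}$ using~\eqref{OOhat}, setting $c(\w)=(\wh D^{-1})_2(\w,\wh c(\w))$, so that $\wh D_2(\w,c(\w))=\wh c(\w)$ and $c(\w)\in BU$ because $\mathcal O(\w_0,x_0)\subset\Om\times BU$. Since $\wh D^{-1}$ is a bijection of $\Om\times BC$ (Theorem~\ref{neutral:Dhat_properties}(i)) which, restricted to any $\Om\times B_r$, is uniformly continuous for the compact-open topology (Theorem~\ref{neutral:Dhat_properties}(iii)), the set $\mathcal O(\w_0,x_0)=\wh D^{-1}(\wh{\mathcal O}(\w_0,y_0))=\{(\w,c(\w))\mid\w\in\Om\}$ is again a copy of the base and $c$ is continuous for the compact-open topology on $BU$. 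That $c$ is an equilibrium of~\eqref{neutral:infdelay} follows from the equilibrium property of $\wh c$: using $\wh D_2(\w,c(\w))=\wh c(\w)$, $u(t,\w,c(\w))=(\wh D^{-1})_2(\w{\cdot}t,\wh u(t,\w,\wh D_2(\w,c(\w))))=(\wh D^{-1})_2(\w{\cdot}t,\wh c(\w{\cdot}t))=c(\w{\cdot}t)$. Finally, writing $u(t,\w_0,x_0)=(\wh D^{-1})_2(\w_0{\cdot}t,\wh u(t,\w_0,y_0))$ and $c(\w_0{\cdot}t)=(\wh D^{-1})_2(\w_0{\cdot}t,\wh c(\w_0{\cdot}t))$, and noting that $\{\wh u(t,\w_0,y_0)\mid t\ge0\}$ and $\{\wh c(\w_0{\cdot}t)\mid t\ge0\}$ both lie in a common ball $B_r$, the uniform continuity of $\wh D^{-1}$ on $\Om\times B_r$ transfers $\lim_{t\to\infty}\di(\wh u(t,\w_0,y_0),\wh c(\w_0{\cdot}t))=0$ to $\lim_{t\to\infty}\di(u(t,\w_0,x_0),c(\w_0{\cdot}t))=0$.

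No genuinely new difficulty appears beyond Section~\ref{secfun}: the substance of the result is Theorem~\ref{copiabasenuevo}, applied to the transformed equation. The two points demanding care are the verification that the transformed trajectory stays bounded (so that Theorem~\ref{copiabasenuevo} is applicable, and so that property~\ref{functional:regularity} of $y_0$ is exactly the hypothesis needed there), and the transfer of the copy-of-base structure, the equilibrium identity, and—most delicately—the asymptotic convergence through $\wh D^{-1}$. I expect this last transfer to be the main obstacle: one must confine the relevant orbits and the equilibrium curve $\wh c(\w_0{\cdot}t)$ to a single fixed ball $B_r$ so that the compact-open uniform continuity of $\wh D^{-1}$ from Theorem~\ref{neutral:Dhat_properties}(iii) can be invoked uniformly in $t$, which is precisely the bookkeeping needed to conclude.
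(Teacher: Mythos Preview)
Your proposal is correct and follows essentially the same route as the paper: verify that $F=G\circ\wh D^{-1}$ satisfies \ref{F1}--\ref{F6} via Propositions~\ref{neutral:transformed_conditions1234} and~\ref{F5F6}, use Lemma~\ref{neutral:bounds_D} and~\eqref{u-uhat} to get boundedness of the transformed trajectory, apply Theorem~\ref{copiabasenuevo} to obtain $\wh{\mathcal O}(\w_0,y_0)=\{(\w,\wh c(\w))\mid\w\in\Om\}$, and pull back through~\eqref{OOhat} with $c(\w)=(\wh D^{-1})_2(\w,\wh c(\w))$. You actually supply more detail than the paper on the final transfer of the equilibrium identity and the asymptotic convergence through $\wh D^{-1}$ (the paper simply writes ``from which the proof is easily finished''); your use of Theorem~\ref{neutral:Dhat_properties}(iii) on a common ball is exactly the right bookkeeping for that step.
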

\begin{proof} From Propositions~\ref{neutral:transformed_conditions1234}
  and~\ref{F5F6}, $F=G\circ\wh D^{-1}$ and the corresponding family of
  systems~\eqref{neutral:transformed_family} satisfies
  assumptions~\ref{F1}--\ref{F6}. In addition, as above,
  $\wh u(t, \w_0,\wh D_2(\w_0,x_0))=\wh D_2(\w_0{\cdot}t,u(t,\w_0,x_0))$ and
  Lemma~\ref{neutral:bounds_D} yield the boundedness of
  $\wh z(\cdot, \w_0,\wh D_2(\w_0,x_0))$, the solution
  of~\eqref{neutral:transformed_family}$_{\w_0}$. As a consequence, from
  Theorem~\ref{copiabasenuevo}, we deduce that the omega-limit
  $\wh{\mathcal{O}}\big(\w_0,\wh D_2(\w_0,x_0)\big)$ is a copy of the base, that
  is,
  \[\wh{\mathcal{O}}\big(\w_0,\wh
    D_2(\w_0,x_0)\big)=\{(\w,\wh c(\w))\mid \w\in\Om\}\,,\] where
  $\wh c:\Om\to BU$ is a continuous equilibrium and
  \[\lim_{t\to\infty} \textup\di\big(\wh
    u(t,\w_0,\wh D_2(\w_0,x_0)),\wh c(\w_0{\cdot}t)\big)=0\,.\] Finally,
  from~\eqref{OOhat}, we have
  $\mathcal{O}(\w_0,x_0)=\wh D^{-1} \wh{\mathcal{O}}(\w_0,\wh D_2(\w_0,x_0))$,
  and we conclude that $\mathcal{O}(\w_0,x_0)=\{(\w,c(\w))\mid \w\in\Om\}$ with
  $c(\w)=(\wh D^{-1})_2(\w,\wh c (\w))$ or, equivalently,
  $\wh D^{-1}(\w,\wh c(\w))=(\w,c(\w))$, from which the proof is easily
  finished.
\end{proof}
\noindent{\bf Acknowledgment.}
The authors would like to thank the two anonymous referees for their careful reading of the manuscript and their valuable comments.

\end{document}